\let\oldsqrt\sqrt
\def\sqrt{\mathpalette\DHLhksqrt}
\def\DHLhksqrt#1#2{%
\setbox0=\hbox{$#1\oldsqrt{#2\,}$}\dimen0=\ht0
\advance\dimen0-0.2\ht0
\setbox2=\hbox{\vrule height\ht0 depth -\dimen0}%
{\box0\lower0.4pt\box2}}
\newcommand{\loglap}{L_{\text{\tiny $\Delta \:$}}\!}
\newcommand{\logrel}{ \text{$(I-\Delta)^{\log}$}}
\newcommand{\logrels}{ \text{$(I+(-\Delta)^s)^{\log}$}}
\newcommand{\R}{\mathbb{R}} 
\newcommand{\N}{\mathbb{N}} 
\newcommand{\dist}{\textnormal{dist}} 
\newcommand{\supp}{\textnormal{supp}} 
\renewcommand{\H}{{\mathcal H}}
\renewcommand{\phi}{\varphi}
\newcommand{\cC}{{\mathcal C}}
\newcommand{\cF}{{\mathcal F}}
\newcommand{\cH}{{\mathcal H}}
\newcommand{\cL}{{\mathcal L}}
\newcommand{\cS}{{\mathcal S}}
\newtheorem{defi}{Definition}[section]
\newtheorem{remark}[defi]{Remark}
\theoremstyle{plain} 
\newtheorem{thm}[defi]{Theorem}
\newtheorem{prop}[defi]{Proposition}
\newtheorem{lemma}[defi]{Lemma}
\numberwithin{equation}{section}
\title
 [The fractional logarithmic  Schr\"odinger operator]{\center The fractional logarithmic  Schr\"odinger operator:  properties and functional spaces}
\thanks{This work was initiated when the author visited Rutgers University of New-Jesey, USA, through  the Abbas Bahri Excellence Fellowship.  He is grateful to the Department of Mathematics for the hospitality and wishes to thank   Yan Yan Li,  Zheng-chao Han, and  Denis Kriventsov for their helpful discussions. The author thanks S. Jarohs for helpfull comments and R. Frank for pointing out paper \cite{FKT20}. The author is  partially funded by a Research Strategic Initiatives Grant (RSIG22) from the University of Northern British Columbia, Canada.}
\author[P. A. Feulefack]{\centering Pierre Aime Feulefack }
\address{Department of Mathematics and Statistics, University of Northern British Columbia, 		Prince George, BC, Canada}
\email{pierreaimefeulefack@unbc.ca}
\begin{document}
\maketitle

\begin{abstract}
In this note, we deal with  the  fractional logarithmic Schr\"odinger operator $(I+(-\Delta)^s)^{\log}$ and the corresponding energy spaces for variational study.  The  fractional (relativistic) logarithmic Schr\"odinger operator is the  pseudo-differential operator with logarithmic Fourier symbol, $\log(1+|\xi|^{2s})$, $s>0$. We first   establish the integral representation corresponding to the operator and  provide an asymptotics property of the related kernel. We  introduce    the functional analytic theory  allowing to study  the  operator from a PDE point of view and the associated  Dirichlet problems in an open set of $\mathbb{ R}^N.$ We also establish some variational  inequalities,     provide the fundamental solution and  the asymptotics  of the corresponding Green function at zero and at infinity.

\end{abstract}

{\footnotesize
\begin{center}
\textit{Keywords.}   Logarithmic symbol,     pseudo-differential operator,    Schr\"odinger operator, Green function, fundamental solution, regularity,  maximum principle.
\end{center}
\vspace{.4cm}
\textit{2010 Mathematics Subject Classification}. 45P05, 45C05,  45A05, 	35R11, 35S15.
}

\tableofcontents

\section{Introduction  }\label{intro}

This note provides tools and properties for the fractional (relativistic) logarithmic Schr\"odinger operator denoted by $(I+(-\Delta)^s)^{\log}$,     which is the pseudo-differential operator corresponding to the  logarithmic Fourier symbol $$\log(1+|\xi|^{2s}),\quad s>0,$$ 
which, for smooth function $u:\R^N\to \R$, it is equivalent to the equality
$$(I+(-\Delta)^s)^{\log}u(x)= \cF^{-1}\big(\log(1+|\xi|^{2s})\cF(u)(\xi)\big)(x), \qquad (x\in \R^N).$$
Here and in the following, $\cF$ and $\cF^{-1}$ stand for the Fourier transform and inverse Fourier transform respectively. This note is somehow a continuation of the logarithmic  Schr\"odinger operator corresponding to the particular  case $s=1$, denoted by 
 $\logrel$,  introduced in \cite{F22},  
$$(I-\Delta)^{\log}u(x)= \cF^{-1}\big(\log(1+|\xi|^{2})\cF(u)(\xi)\big)(x), \qquad (x\in \R^N).$$
As already mentioned in \cite{F22}, we  adopt  the superscript  notation  $(I+(-\Delta)^s)^{\log}$  to emphasize on the nonlocal feature of these operators.  We    call  the operator here  the fractional   logarithmic  Schr\"odinger operator  and this can be found in literature with the classical notation 
$$\log(I+(-\Delta)^s),$$
see for instance \cite{CPI22,PCI22,SSV06,KM14,B14,GR12,BG60,SV03,B15,ZR05,KP99,B15,MCC98}  and the references therein. These operators are usually called  in the point of view of probabilistic and  potential theory,  the  infinitesimal generators of a symmetric geometric $2s$-stable processes for $s\in (0,1)$ \cite{JS01} and  the infinitesimal generator for the symmetric  variance gamma process for $s=1$ \cite{KS13}. Generally for $s>0$, the fractional  logarithmic Schr\"odinger operators are    defined through their characteristic function or their Laplace exponent $$\psi(\lambda):=\log(1+\lambda^{2s}).$$
 In that direction, the fractional logarithmic Schr\"odinger operator has so far received most attentions and has vaste applications  in mathematical finance. In particular, they play an important role in heavy-tail modeling of economic data \cite{MCC98} and in the study heavy-tailed financial models \cite{RM00, SSV06}. Moreover, despite the wide  applications  in mathematical finance and other fields of sciences, there has not been much study in the point of view of Partial Differential Equations (PDEs) or Integro-Differential Equations (IDEs) involving   these operators.
 
 Many results in the literature on problems involving   the fractional  Logarithmic Schr\"odinger operator are proved using the corresponding L\'evy Khintchine or  Laplace exponent  $\psi(\lambda):=\log(1+\lambda^{2s})$, combined with Tauberian-type theorems \cite{SSV06,KM14,KP99,KM17,M14} or the method via Fourier transform in $\R^N$. We mention for instance  \cite{PCI22,CPI22}   where the fractional  logarithmic Schr\"odinger operator have been  used in wave equation to model damping mechanism in $\R^N$ (see also the references therein)
 \begin{equation*}\label{inv}
	\begin{split}
	\quad\left\{\begin{aligned}
		u_{tt}+\logrels u+(I+(-\Delta)^s)^{\log}  u_t &= 0 &&\text{ \ \quad  $x\in \R^N$,\ $t>0$}\\
		u(x,0) =  u_0(x)  \quad u_t(x,0)= u_1(x) &         && \text{ \ \  } x\in \R^N. 
	\end{aligned}\right.
	\end{split}
	\end{equation*}
 We also mention  \cite{DCW2011}, where  the fractional  logarithmic Schr\"odinger operator has been  used in an inviscid model generalizing the two-dimensional Euler and the surface quasi-geostrophic equations describe by
 \begin{equation*}\label{dam}
	\begin{split}
	\quad\left\{\begin{aligned}
		&\theta_{t}+u\cdot \nabla \theta = 0\quad \text{ \   $x\in \R^N$,\ $t>0$}\\
		&u =  \nabla^{\perp} \psi, \quad \Delta \psi = \logrels \theta,           \text{ \ \  } 
	\end{aligned}\right.
	\end{split}
	\end{equation*}
 where $\theta=\theta(x, t)$ is a scalar function of $x\in\R^N$ and $t>0$, $u$ denotes a velocity field in $\R^N$. This is also known as log-Euler equation. In   \cite{ZSGZ23}, the logarithmic Schr\"odinger operator is used to model a fast algorithm for intra-frame versatile video coding based on edge features. In \cite{LP05} a model involving a one-dimensional logarithmic Schr\"odinger operator is used to investigate finite-time blow-up and stability of semilinear integro-differential equations of the form
 \[
 \begin{cases}
     w_t+\logrel w+\nu t^{\sigma}w^{1+\beta}\\
     w(x,0)=\phi(x)\ge 0,\quad x\in \R_+,
 \end{cases}
 \]
 where  $\nu>0$, $\sigma\in \R$ and $\beta>0$ are constants. See also \cite{CK23,ZK22, AR23} and the references in there for other problems involving the logarithmic Schr\"odinger operator $\logrel$.
 
 In the general theory of partial differential equations (PDEs) or integro-differential equations (IDEs), to avoid apriori regularity assumption on solutions to the initial problem, it is a common case that one first finds a weak or distributional solutions  to the initial problem, and then, by showing that the solution is sufficiently regular, one argues that it is in fact a strong or classical solution. This  theory of  PDEs involves Sobolev spaces and variational approach, which are basic tools to establish weak, distributional or viscosity solutions to a specific given equation.

The aim of the present note is to define   the operator $\logrels$ from an analytical  point of view and  introduce tools  from  functional analytic theory which, we think will be helpful  to PDEs oriented readers to  study   problems involving the fractional  logarithmic Schr\"odinger operator $\logrels$ in a domain using variational approach. So far
, there are not many papers or books in the literature dealing with the fractional logarithmic Schr\"odinger operator in domain. 

In the sense above, some of our results may not be new for expert in nonlocal elliptic or parabolic operators,   but the exposition is explicitly given in this note particularly for the fractional logarithmic schr\"odinger operator. Moreover, more on L\'evy-type processes and their corresponding  pseudo-differential operators on the general form  $\psi(\Delta)$, where $\psi$ is a Bernstein functions can be found in      \cite{JS01}. \\

\noindent
{\bf Outline.} The paper is divided in four  sections and   organised as follows. \\

In section \ref{sECT1}, we show that for  compactly supported Dini continuous function $u:\R^N\to\R,$ the operator $\logrels$ can be represented as a singular integral operator,
\[
\begin{split}
(I+(-\Delta)^s)^{\log} u(x)&=\cF^{-1}\big(\log(I+|\xi|^{2s})\cF(u)\big)(x)\\&=P.V\int_{\R^N}\big(u(x)-u(y)\big)K_s(x-y) \ dy,
\end{split}
\]
where the  kernel function $K_s$ is given below (see \eqref{KerenelExp}) and  satisfies the asymptotics
\begin{equation}\label{A}
K_s(z)\sim \begin{cases} |z|^{-N} \quad& \text{ as} \quad |z|\to 0\\
\\
|z|^{-N-2s} \quad &\text{ as} \quad |z|\to \infty
\end{cases}
\end{equation}
for $s\in (0,1)$, and for $s=1$,
\begin{equation}\label{A-1}
K_1(z)\sim \begin{cases} |z|^{-N} \quad& \text{ as} \quad |z|\to 0\quad \\
\\
|z|^{-\frac{N+1}{2}}e^{-|z|} \quad &\text{ as} \quad |z|\to \infty.
\end{cases}
\end{equation}

In section \ref{Sect2}, we deal with the fundamental solution and  Green function for $\logrels$. Denoting by $G$ the Green function for $\logrels$, we show that 
\[
G(x)=\int_0^{\infty}q_t(x)\ dt=\int_0^{\infty} \frac{1}{\Gamma(t)}\int_{0}^{\infty}p_s(x,\tau)\tau^{t-1}e^{-\tau}\ d\tau\ dt.
\]
 where $q_t$ is the associated density function and the function $G$ satisfies  the asymptotics
    \begin{align*}
G(x)\sim \begin{cases}
\frac{1}{|x|^{N}(\log\frac{1}{|x|})^2}&\qquad\text{as}\quad |x|\to 0\\
\\
 \frac{C_{N,s}}{|x|^{N+2s}(\log\frac{1}{|x|})^2} &\qquad\text{ as }\quad |x|\to \infty.
\end{cases}
\end{align*}

In section \ref{Sect3}, we set up the functional analytic framework for the  Dirichlet problems involving the operator $\logrels$ in domain. We prove  some important  inequalities for variational study as well as some compactness results.

In section \ref{Sect4}, we investigate the existence results   and some properties of solutions for Poisson problem involving the fractional logarithmic  Schr\"odinger $\logrels$ in an open bounded  set $\Omega$ of $\R^N$. Moreover, we prove also the strong maximum principle for pointwise solutions.\\

\noindent
{\bf Notations.} Throughout the note,  we shall use the  following notations.  We let $\omega_{N-1}= {2\pi^{\frac{N}{2}}}/{\Gamma(\frac{N}{2})} $ denote the measure of the unit sphere in $\R^N$ and,  for a set $A \subset\R^N$ and $x \in \R^N$, we define $\delta_A(x):=\dist(x,A^c)$ with $A^c=\R^N\setminus A$ and, if $A$ is measurable, then $|A|$ denotes its Lebesgue measure. Moreover, for given $r>0$, let $B_r(A):=\{x\in \R^N\;:\; \dist(x,A)<r\}$, and let $B_r(x):=B_r(\{x\})$ denote the ball of radius $r$ with $x$ as its center. If $x=0$ we also write $B_r$ instead of $B_r(0)$.
If $A$ is open, we denote by $C^k_c(A)$ the space of function $u:\R^N\to \R$ which are $k$-times continuously differentiable and with support compactly contained in $A$. If $f$ and $g$ are two functions,
then, $f\sim g$ as $x\to a$ if  $\frac{f(x)}{g(x)}$ converges to a constant as $x$ converges to $a$.
By $\cS(\R^N)$, we denote the  Schwartz space  of rapidly decaying functions in $\R^N$  with the topology   generated by the family of semi-norm
\[ 
P_N(\phi) = \sup_{x\in \R^N}(1+|x|)^N\sum_{|\alpha|\le N}|D^{\alpha}\phi(x)|, \quad N=0,1,2,\dots
\]
with $\phi\in \cS(\R^N)$. For any $\phi\in \cS(\R^N)$, we define the Fourier transform of $\phi$ by
\[
\widehat\phi(\xi)=\cF(\phi)(\xi) = \frac{1}{(2\pi)^{\frac{N}{2}}}\int_{\R^N}e^{-ix\cdot\xi}\phi(x)\ dx.
\] 
 Let  $\cS'(\R^N)$ be the set of all tempered distributions,  the topological dual of  $\cS(\R^N)$. The Fourier Transform  can be extended continuously from  $\cS(\R^N)$ to  $\cS'(\R^N)$.


\section{Definition and properties of  the  operator } \label{sECT1}

 Let start with the well-known  fractional Laplace operator $(-\Delta)^s$ with $s\in (0,1)$, appearing in the definition of $(I+(-\Delta)^s)^{\log}$.  Due the important links to stochastic processes and partial differential equations,   the  fractional Laplacian   has received by far  most attention see e.g. \cite{JW20, FJW22,V18,BG60,BJ07,AL22} and the references therein. It is defined  for a compactly supported functions $u : \R^N \to \R$ of class $C^2$ via the singular integral 
\begin{equation*}
\begin{split}
 (-\Delta)^s u(x)&=C_{N,s}\lim_{\epsilon\to 0^+}\int_{\R^N\setminus B_{\epsilon}(x)}\frac{u(x)-u(y)}{|x-y|^{N+2s}}\ dy 
 \end{split}
\end{equation*}
 where  the normalization constant $C_{N,s}$ is given by
\begin{equation}\label{const}
C_{N,s}:= \pi^{-\frac{N}{2}}2^{2s}s\frac{\Gamma(\frac{N}{2}+s)}{\Gamma(1-s)}, 
\end{equation}
and it is chosen such that, equivalently, the fractional Laplacian is defined by its Fourier transform
  $\cF\bigl((-\Delta)^su\bigr) = |\cdot|^{2s}\cF{u} $.
It is well known that for that for  $u\in \cC^{2}_c(\R^N)$, the following limits hold
\begin{align*}
\lim_{s\to 1^-}(-\Delta)^su(x) = -\Delta u(x)\quad\text{ and }\quad \lim_{s\to 0^+}(-\Delta)^su(x) =  u(x).
\end{align*}
 We also observe that $\lambda^{\alpha}$,  $\alpha>0$ is a  Bernstein functions with representation 
 \[
 \lambda^{\alpha}= \frac{1}{|\Gamma(-\alpha)|}\int_{0}^{\infty}(1-e^{-t\lambda}) t^{-1-\alpha}\ dt.
 \]
 Moreover,  it is not difficult to check using the Cauchy-Frullani's integral formula \cite{A90} that $\log(1+\lambda^{\alpha})$ has the representation 
 \[\log(1+\lambda^{\alpha}) = \int_{0}^{\infty}(1-e^{-t\lambda^{\alpha}})\frac{e^{-t}}{t}\ dt,
 \]
and it is in fact  a Bernstein function too.  Therefore, using Bochner's subordination  (by substituting formally $\lambda =-\Delta$),   the fractional   logarithmic  Schr\"odinger operator can also be represented in the sense of spectral theory in $L^2$ by
 \[
 (I+(-\Delta)^s)^{\log} =  \int_{0}^{\infty}(1-e^{-t(-\Delta)^{s}})\frac{e^{-t}}{t}\ dt.
 \]
 It well known from the spectral theorem  that the fractional Laplacian $(-\Delta)^s$ generates a strongly continuous semigroup of operator $P_t=e^{-t(-\Delta)^s}$, which  is the Fourier multiplier with symbol $e^{-t|\xi|^{2s}}$. Hence $P_t$  is the convolution operator with a symmetric kernel function $p_s(z,t)$ satisfying the Fourier representation $$\cF(p_s(\cdot,t))(\xi)=e^{-t|\xi|^{2s}}.$$ The function $p_s(t,x)$  is the transition density of a  $s$-stable L\'evy process.   
Moreover,  for  two values of the parameter $s$, namely $s=1$ and $s=\frac{1}{2}$, the  function $p_s(x,t)$ is explicitly known and in the first case, it writes as 
$$p_1(x,t)=(4\pi t)^{-\frac{N}{2}}e^{-\frac{|x|^2}{4t}}, \quad t>0,$$ 
known  as the Gaussian kernel for the standard heat equation (see \cite{V18},\cite[Page 180]{LL97}). For the fractional case  $s=\frac{1}{2}$, we have that
\[
p_{\frac{1}{2}}(x,t)=\gamma_N\frac{t}{(t^2+|x|^2)^{\frac{N+1}{2}}}\quad\text{ with }\quad \gamma_N=\Gamma(\frac{N+1}{2})\pi^{-\frac{N+1}{2}}.
\]
For $s\in (0,1)$, by the Fourier inversion theorem, $p_s(z,t)$ takes the form
\[
p_s(z,t)=\frac{1}{(2\pi)^{\frac{N}{2}}}\int_{\R^N}e^{-i\xi\cdot z} e^{-t|\xi|^{2s}} d\xi.
\]	
Also $\cF(p_s)(\xi)=e^{-t|\xi|^{2s}}$ is rapidly decreasing and therefore $p_s(z,t)$ is of class $\cC^{\infty}$. We also have the following scaling property  
\[
p_s(z,t) = t^{-\frac{N}{2s}}p_s(t^{-\frac{1}{2s}}z,1) 
\]
and the following limits 
\[
 \lim_{|z|\to \infty}|z|^{N+2s}p_s(z,1) = C_{N,s}\qquad\text{ and }\qquad  \lim_{t\to 0}\frac{|z|^{N+2s}p_s(z,t)}{t} = \kappa_{N,s}.
\]
Moreover,  with the constant $C_{N,s}>0$ as in  \eqref{const} with $s\in (0,1)$, it is well known that for any $t>0$   the transition density functions $p_s(y,t)$ satisfies (see \cite{BG60,BJ07,V18})
\begin{equation}\label{Transsition-inequality}
p_{s}(y,t)\sim C_{N,s}\min\Big( \frac{t}{|y|^{N+2s}}, t^{-\frac{N}{2s}}\Big),\quad y\in \R^N.
\end{equation}
Moreover, $\frac{t}{|y|^{N+2s}}\le t^{-\frac{N}{2s}}$ if and only if $t\le |y|^{2s}$.  Since we have that
$$\text{$P_tu(x)=e^{-t(-\Delta)^{s}}u(x)= [p_s(\cdot,t)*u](x)$\qquad and \qquad $p_s(-y,t)=p_s(y,t)$},$$ 
for $u\in \cS(\R^N)$, we can write 
\begin{align*}
 (I+(-\Delta)^s)^{\log}u(x) &=  \int_{0}^{\infty}\big(u(x)-P_tu(x))\frac{e^{-t}}{t} dt\\
 &=\int_{0}^{\infty}(u(x)-[p_s(\cdot,t)*u](x)\big)\frac{e^{-t}}{t} dt\\
 &=\int_{0}^{\infty}(u(x)-[p_s(-\cdot,t)*u](x)\big)\frac{e^{-t}}{t} dt,
 \end{align*}
where $P_tu$ can be interpreted as the solution of the fractional heat equation with initial datum $u$, namely, $P_tu$ is the solution of the fractional heat equation
\[
\begin{cases}
\partial_{t}P_tu=(-\Delta)^sP_tu,\quad \quad t>0, \\
 P_0u=u.
\end{cases}
\]
Finally, using the  density property of $p_s(\cdot,t)$, the fractional   logarithmic  Schr\"odinger operator can be represented as
\begin{align*}
 (I+(-\Delta)^s)^{\log}u(x) &=  \frac{1}{2}\int_{0}^{\infty}\big(2u(x)-[p_s(\cdot,t)*u](x)-[p_s(\cdot,t)*u](x)\big)\frac{e^{-t}}{t} dt\\
 &=\frac{1}{2}\int_{\R^N}\big(2u(x)-u(x+y)- u(x-y)\big)K_s(y) \ dy,
\end{align*}
where the kernel function $K_s:\R^N\to\R$ is given by
\begin{equation}\label{KerenelExp}
\begin{split}
K_s(z) &= \int_{0}^{\infty}p_s(z,t)\frac{e^{-t}}{t}\ dt\\
&=\frac{1}{(2\pi)^{\frac{N}{2}}}\int_{0}^{\infty}\int_{\R^N}e^{-i\xi\cdot z} e^{-t|\xi|^{2s}} \frac{ e^{-t}}{t}\ d\xi dt.
\end{split}
\end{equation}
The explicit expression of the kernel $K_s$ is unknown due to the difficulty on computing the integral appearing in \eqref{KerenelExp}. Moreover, from the explicit expression of the density function  $p_{1}(\cdot,t)$, the explicit expression for $K_1$ is known (see \cite{F22}) and it is given in term of the modified Bessel function of second kind. The corresponding operator, denoted by  $(I-\Delta)^{\log}$ is the  logarithmic  Schr\"odinger operator.  It has been proved in  \cite{F22} that for  compactly supported Dini continuous functions $u : \R^N \to \R$, the operator $(I-\Delta)^{\log}$ has  a singular integral representation  given  by
\begin{equation}\label{log-s-1}
(I-\Delta)^{\log} u(x)= \lim_{\epsilon\to 0}\int_{\R^N\setminus B_{\epsilon(x)}}(u(x)-u(y))K_1(x-y)\ dy,
\end{equation}
where   $K_1$ is given by
\[
 K_1(x-y)= 2^{2^{1-\frac{N}{2}}}\pi^{-\frac{N}{2}}|x-y|^{-\frac{N}{2}}\kappa_{\frac{N}{2}}(x-y)
\]
and  $\kappa_{\nu}$ is the modified Bessel function of second kind with index $\nu>0$, given by
\[
\kappa_{\nu}(r)=\frac{(\pi/2)^{\frac{1}{2}}r^{\nu}e^{-r}}{\Gamma(\frac{2\nu+1}{2})}\int_{0}^{\infty}e^{-rt}t^{\nu-\frac{1}{2}}(1+{t}/{2})^{\nu-\frac{1}{2}}dt.
\] 
From the radial property of the  density function  $p_{s}(\cdot,t)$, $s\in(0,1]$, it easy to see that the kernel is symmetric, i.e. it satisfies $K_s(-z)=K_s(z)$ for all $z\in \R^N$ and 
\begin{equation}\label{non-integrability}
\int_{\R^N}K_s(z)\ dz=\infty.\quad
\end{equation}
The above condition in \eqref{non-integrability} is visible from the asymptotics of the kernel $K_s$ in Theorem \ref{properties1}. This shows that for any $s>0$ the  the kernel $K_s$ is singular at the origin. Therefore, the fractional   logarithmic  Schr\"odinger operator is a singular integral operator and  should be understood in the principal value sense,
\begin{equation}\label{Intro-int-1}
\begin{split}
(I+(-\Delta)^s)^{\log} u(x)&=P.V\int_{\R^N}\big(u(x)-u(y)\big)K_s(x-y) \ dy\\
&=\lim_{\epsilon\to 0}\int_{\R^N\setminus{B_{\epsilon}(x)}}\big(u(x)-u(y)\big)K_s(x-y) \ dy.
\end{split}
\end{equation} 
Moreover, for $\Omega$ open set of $\R^N$, one may also define the \textit{regional}  fractional   logarithmic  Schr\"odinger operator by
\[
(I+(-\Delta)^s)_{\Omega}^{\log} u(x)=\lim_{\epsilon\to 0}\int_{\Omega\setminus{B_{\epsilon}(x)}}\big(u(x)-u(y)\big)K_s(x-y) \ dy,
\]
which may might  be interpreted as the  regional Fractional Laplacian $(-\Delta)^s_{\Omega}$, (see \cite{C18}). In particular,  for $s=1$ and up to a multiplicative constant, from the asymptotics of the kernel $K_s$ (see Theorem \ref{properties1}), an example of  regional     logarithmic  Schr\"odinger operator can be identified with the operator $H$ in \cite{FKT20} given by
\[
(I-\Delta)_{\mathbb{S}^{N}}^{\log} u(x):=Hu(x)=P. V. \int_{\mathbb{S}^N}\frac{u(x)-u(y)}{|x-y|^N}\ dy,
\]
where $\mathbb{S}^N:\{x\in\R^{N+1}: \|x\|=1\}$ is the unit sphere in $\R^{N+1}$. The authors in \cite{FKT20} classified all nonnegative solutions $u$ of the equation
\[
(I-\Delta)_{\mathbb{S}^{N}}^{\log} u= u\log u\quad \text{ in }\quad\mathbb{S}^N.
\]
  If $\Omega$ has small volume, the regional  fractional   logarithmic  Schr\"odinger operator up to multiplicative constant also coincides with the regional logarithmic Laplacian \cite{TW22},
\[
(I-\Delta)_{\Omega}^{\log} u(x):=L_{\Delta}^{\Omega}u(x)=P. V. \int_{\Omega}\frac{u(x)-u(y)}{|x-y|^N}\ dy.
\]
We note that in the particular case $N = 1$ and $s=1$, it follows from the definition in \eqref{log-s-1} that (see \cite{GR12,B14,KS13,LP05})
\begin{equation}\label{log-1}
(I-\Delta)^{\log} u(x)= P.V. \int_{\R^N}\frac{u(x)-u(y)}{|x-y|}e^{-|x-y|}\ dy.
\end{equation}
The main   result of this section  provides  an asymptotics property  of the kernel $K_s$ at the singularity and at infinity.  Since the explicit expression of $K_s$ is not known for $s\in (0,1)$, the following theorem is capital and will be used almost everywhere in this note. It writes as follows.

\begin{thm}
\label{properties1}
\begin{enumerate}[(i)]
    \item 
 Let  $C_{N,s}>0 $ be as  given  in \eqref{const} with the parameter $s\in (0,1)$. Then the  kernel function $K_s$ for $(I+(-\Delta)^s)^{\log} $ satisfies the asymptotics
\begin{equation}\label{Asymptoti-o}
K_s(z)\sim \begin{cases} \kappa_{N,s}|z|^{-N} \qquad\quad \text{ as} \quad |z|\to 0\\
\\
C_{N,s}|z|^{-N-2s} \qquad \text{ as} \quad |z|\to \infty.
\end{cases}
\end{equation}
\item
For $s\in (0,1]$ and $u\in \cC^{1}_c(\R^N)$, the operator $(I+(-\Delta)^s)^{\log} $ can represented via its Fourier transform as
\begin{equation}
\cF((I+(-\Delta)^s)^{\log} u)(\xi) = \log(1+|\xi|^{2s})\cF(u)(\xi), \qquad \xi\in \R^N.
\end{equation}
\item 
Let $u \in \cC^{1}_c(\R^N)$. Then, the function $s\mapsto (I+(-\Delta)^s)^{\log}u$ is uniformly continuous. In particular,  the following limits hold
\begin{align*}
&\lim_{s\to 1}(I+(-\Delta)^s)^{\log}u = (I-\Delta)^{\log}u\qquad\text{and}\qquad \lim_{s\to 0^+}(I+(-\Delta)^s)^{\log}u = 0.
\end{align*}
where the operator $\logrel u$ is defined in \eqref{log-s-1}.
\item The operator $\logrels$ is invariant under $(\lambda+(-\Delta)^s)^{-1}$, $\lambda>0$. That is 
\begin{equation}\label{invariant}
    \logrels\big[(\lambda+(-\Delta)^s)^{-1}u\big]= (\lambda+(-\Delta)^s)^{-1}\logrels u
\end{equation}
for all   $u\in \cC^1_c(\R^N).$
\end{enumerate}
\end{thm}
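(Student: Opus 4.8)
The plan is to establish the four items essentially in the order stated, since later parts rely on the integral representation and the asymptotics from part (i).

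\emph{Part (i): asymptotics of $K_s$.} Starting from the representation $K_s(z)=\int_0^\infty p_s(z,t)\frac{e^{-t}}{t}\,dt$ established above, I would split the integral at $t=|z|^{2s}$, exploiting the two-sided bound \eqref{Transsition-inequality} that $p_s(y,t)\sim C_{N,s}\min(t|y|^{-N-2s},\,t^{-N/2s})$. For $|z|\to\infty$: the range $t\le|z|^{2s}$ contributes $\int_0^{|z|^{2s}} C_{N,s}\frac{t}{|z|^{N+2s}}\frac{e^{-t}}{t}\,dt = \frac{C_{N,s}}{|z|^{N+2s}}\int_0^{|z|^{2s}}e^{-t}\,dt\to \frac{C_{N,s}}{|z|^{N+2s}}$ times the full Gamma-type integral $\int_0^\infty e^{-t}\,dt=1$, which produces the leading term $C_{N,s}|z|^{-N-2s}$; the complementary range $t>|z|^{2s}$ is controlled by $\int_{|z|^{2s}}^\infty t^{-N/2s}\frac{e^{-t}}{t}\,dt$, which decays faster than any power and is negligible. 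For $|z|\to0$: here the factor $e^{-t}$ is essentially $1$ on the relevant scales, so $K_s(z)\approx\int_0^\infty p_s(z,t)\frac{dt}{t}$; using the scaling $p_s(z,t)=t^{-N/2s}p_s(t^{-1/2s}z,1)$ and substituting $t=|z|^{2s}\tau$ gives $K_s(z)\sim |z|^{-N}\int_0^\infty \tau^{-N/2s}p_s(\tau^{-1/2s}\hat z,1)\frac{d\tau}{\tau}$, and one identifies the constant with $\kappa_{N,s}$ via the stated limit $\lim_{t\to0}t^{-1}|z|^{N+2s}p_s(z,t)=\kappa_{N,s}$ after a change of variables. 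The main obstacle is making the ``$e^{-t}\approx1$'' and tail-truncation steps rigorous as genuine asymptotic equivalences rather than mere order bounds; this requires a careful dominated-convergence argument on the rescaled integrals.

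\emph{Part (ii): Fourier representation.} For $u\in C^1_c(\R^N)$ I would combine the integral representation from \eqref{Intro-int-1}, Fubini's theorem, and the elementary identity $\cF\big(u(x)-u(x-y)\big)(\xi)=(1-e^{-iy\cdot\xi})\hat u(\xi)$; symmetrizing over $\pm y$ replaces the bracket by $(1-\cos(y\cdot\xi))$, and one is reduced to showing $\int_{\R^N}(1-\cos(y\cdot\xi))K_s(y)\,dy=\log(1+|\xi|^{2s})$. This last identity follows by inserting $K_s(y)=\int_0^\infty p_s(y,t)\frac{e^{-t}}{t}\,dt$, using Fubini again, and recalling $\int_{\R^N}(1-\cos(y\cdot\xi))p_s(y,t)\,dy = (2\pi)^{N/2}\big(1-e^{-t|\xi|^{2s}}\big)$ from $\cF(p_s(\cdot,t))(\xi)=e^{-t|\xi|^{2s}}$, so the $t$-integral becomes the Frullani integral $\int_0^\infty(1-e^{-t|\xi|^{2s}})\frac{e^{-t}}{t}\,dt=\log(1+|\xi|^{2s})$ cited earlier. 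The justification of Fubini near $y=0$ and $t=0$ uses the $C^1_c$ regularity of $u$ (a second-order Taylor remainder in the symmetrized form) together with the asymptotics from part (i).

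\emph{Parts (iii) and (iv).} For (iii), with $\hat u$ fixed in the Schwartz class, I would write $(I+(-\Delta)^s)^{\log}u(x)=\cF^{-1}\big(\log(1+|\xi|^{2s})\hat u(\xi)\big)(x)$ from part (ii) and show the map $s\mapsto\log(1+|\xi|^{2s})\hat u(\xi)$ is continuous (indeed uniformly continuous on $s\in[0,1]$) in, say, $L^1_\xi$, using that $|\log(1+|\xi|^{2s})|\le \log(1+|\xi|^2)+\log 2$ uniformly and that $\xi\mapsto\log(1+|\xi|^2)\hat u(\xi)$ is integrable; then $\cF^{-1}$ is continuous from $L^1$ to $L^\infty$, and the two limiting values follow by dominated convergence since $\log(1+|\xi|^{2s})\to\log(1+|\xi|^2)$ as $s\to1$ and $\to0$ as $s\to0^+$, pointwise in $\xi\neq0$. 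For (iv), I would again pass to Fourier side: $(\lambda+(-\Delta)^s)^{-1}$ is the Fourier multiplier $(\lambda+|\xi|^{2s})^{-1}$, and since the composition of Fourier multipliers is commutative — both $\log(1+|\xi|^{2s})$ and $(\lambda+|\xi|^{2s})^{-1}$ are bounded multipliers sending the Schwartz class into a space where part (ii)'s representation applies — the identity \eqref{invariant} is immediate once one checks that $(\lambda+(-\Delta)^s)^{-1}u\in C^1_c$ is replaced by a suitable class (it is only Schwartz-like, not compactly supported, so one verifies the Fourier representation of $(I+(-\Delta)^s)^{\log}$ extends to $\cS(\R^N)$, which it does by the same Fubini argument). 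The only delicate point in (iv) is this domain issue, which is handled by density or by directly noting both sides equal $\cF^{-1}\big(\log(1+|\xi|^{2s})(\lambda+|\xi|^{2s})^{-1}\hat u(\xi)\big)$.
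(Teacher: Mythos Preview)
Your proposal is correct and tracks the paper closely in parts (i) and (ii). For (i) the paper also splits the $t$-integral at $t=|z|^{2s}$ using the two-sided heat-kernel bound \eqref{Transsition-inequality}; in the regime $|z|\to 0$ it rewrites the resulting pieces via the incomplete gamma function representation $\Gamma(a,z)=z^ae^{-z}\int_0^\infty e^{-tz}(1+t)^{a-1}\,dt$ rather than your direct scaling substitution $t=|z|^{2s}\tau$, but the underlying mechanism (dominated convergence on a rescaled integral) is identical. Part (ii) is argued in the paper exactly as you outline, via Fubini and the Frullani integral.

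Parts (iii) and (iv) diverge in method. For (iii) the paper obtains a quantitative Lipschitz bound: it writes $\log(1+|\xi|^{2s})-\log(1+|\xi|^{2s_0})$ via the fundamental theorem of calculus in $s$, controls the resulting $|\log|\xi||$ factor by $\tfrac{1}{\epsilon}(|\xi|^{-\epsilon}\chi_{B_1}+|\xi|^{\epsilon}\chi_{\R^N\setminus B_1})$, and concludes that the pointwise difference is bounded by $C|s-s_0|$. Your dominated-convergence-plus-compactness route is softer but suffices for the stated uniform continuity; the paper's argument buys an explicit modulus. For (iv) the paper works on the integral side, setting $w=(\lambda+(-\Delta)^s)^{-1}u$ and pulling $(\lambda+(-\Delta)^s)^{-1}$ through the singular integral in one line without further comment; your Fourier-multiplier argument is cleaner, and you correctly flag the domain issue (that $w\notin \cC^1_c(\R^N)$ in general), which the paper does not address.
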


\begin{proof}
$(i).$ This can be directly checked in particular for  $s=\frac{1}{2}$, using the explicit expression of $p_{\frac{1}{2}}(x-y,t)$. Indeed, by a change of variable, we have that
\begin{equation}\label{chagevar}
\begin{split}
K_{\frac{1}{2}}(x-y)&=\frac{\gamma_N}{|x-y|^{N}}\int_0^{\infty}\frac{e^{-t|x-y|}}{(t^2+1)^{\frac{N+1}{2}}}\ dt\\
&=\frac{\gamma_N}{|x-y|^{N+1}}\int_0^{\infty}\frac{e^{-t}}{(\frac{t^2}{|x-y|^2}+1)^{\frac{N+1}{2}}}\ dt.
\end{split}
\end{equation}
Using the first equality of $K_{\frac{1}{2}}$ in \eqref{chagevar}, we have
\[
\lim_{|x-y|\to 0}|x-y|^N K_{\frac{1}{2}}(x-y)=\frac{\Gamma(\frac{N}{2})}{2\pi^{N/2}}.
\]
Using the second  equality of $K_{\frac{1}{2}}$ in \eqref{chagevar}, we get
\begin{align*}
\lim_{|x-y|\to \infty}|x-y|^{N+1} K_{\frac{1}{2}}(x-y)=\gamma_N.
\end{align*}
For  general $s\in(0,1)$,  we shall use the following representation of the incomplete gamma function  (see \cite[Page 177]{OLBC10}) given by
\begin{equation}\label{imcompleteGamma}
\Gamma(a,z)=z^ae^{-z}\int_0^{\infty}\frac{e^{-tz}}{(1+t)^{1-a}}\ dt, \quad \text{ for } ~z>0.
\end{equation}
Moreover,  with the constant $C_{N,s}>0$ as in  \eqref{const} with $s\in (0,1)$, it is well known that for any $t>0$   the transition density functions $p_s(x,y,t)$ satisfies (see \cite{BG60,BJ07,V18})
\begin{equation}\label{Transsition-inequality2}
p_{s}(x,y,t)\sim C_{N,s}\min\left(t^{-\frac{N}{2s}}, \frac{t}{|x-y|^{N+2s}}\right),\quad x,y\in \R^N.
\end{equation}
However, $\frac{t}{|x-y|^{N+2s}}\le t^{-\frac{N}{2s}}$ if and only if $t\le |x-y|^{2s}$. 
Thus, using \eqref{Transsition-inequality2}, we have
\begin{align*}
K_s(x-y)&=\int_{0}^{\infty}p_s(x-y,t)e^{-t}\frac{dt}{t}\\
&\sim C_{N,s}\int_{0}^{\infty}\min\left(t^{-\frac{N}{2s}}, \frac{t}{|x-y|^{N+2s}}\right)e^{-t}\frac{dt}{t}\\
&= C_{N,s}\int_{0}^{|x-y|^{2s}}\frac{e^{-t}}{|x-y|^{N+2s}}\ dt+C_{N,s}\int_{|x-y|^{2s}}^{\infty}t^{-\frac{N+2s}{2s}} e^{-t}\ dt\\
&= \frac{C_{N,s}}{|x-y|^{N+2s}}\Big(1-e^{-|x-y|^{2s}}\Big)+C_{N,s}\Gamma(-\frac{N}{2s},|x-y|^{2s})\\
&=\frac{C_{N,s}}{|x-y|^{N}}\Big(\frac{1-e^{-|x-y|^{2s}}}{|x-y|^{2s}}\Big)+\frac{C_{N,s}e^{-|x-y|^{2s}}}{|x-y|^{N}} \int_0^{\infty}\frac{e^{-t|x-y|^{2s}}}{(1+t)^{1+\frac{N}{2s}}}\ dt.
\end{align*}
It follows by the dominated convergent theorem that
\begin{align*}
\lim_{|x-y|\to 0}\frac{K_s(x-y)}{|x-y|^{N}}&= C_{N,s}+C_{N,s}\int_0^{\infty}\frac{1}{(1+t)^{1+\frac{N}{2s}}}\ dt\\
&=C_{N,s}\left(1+\frac{\sqrt{\pi}\Gamma(\frac{N+s}{2s})}{2\Gamma(\frac{N+2s}{2s})}\right)=:\kappa_{N,s}
\end{align*}
and 
\begin{align*}
\lim_{|x-y|\to +\infty}\frac{K_s(x-y)}{|x-y|^{N+2s}}&=C_{N,s}\lim_{|z|\to +\infty}\Big(\big(1-e^{-|z|^{2s}}\big)+\frac{|z|^{2s}}{e^{|z|^{2s}}} \int_0^{\infty}\frac{e^{-t|z|^{2s}}}{(1+t)^{1+\frac{N}{2s}}} dt\Big)\\
&=C_{N,s}.
\end{align*}
This completes the proof of $(i)$.

$(ii).$ Let  $s\in (0,1]$. Since $u\in \cC^1_c(\R^N)$, the integrand in \eqref{Intro-int-1} is a $L^1$ function on $\R^N\times\R^N$. Thus,   one applies Fourier transform on both side of \eqref{Intro-int-1} combined with the Fubini's theorem and the symmetry property of the kernel $K_s$ to see that
\begin{align*}
&\cF((I+(-\Delta)^s)^{\log} u)(\xi)=\cF(u)(\xi)\int_{\R^N}(1-e^{-iy.\xi})K_s(y)\ dy\\
&\qquad\qquad\qquad=\cF(u)(\xi)\Big(\int_0^{\infty}\int_{\R^N}\Big(p_s(y,t) -e^{iy\cdot(-\xi)}p_s(y,t)\Big)\ dy\  e^{-t} \frac{dt}{t}\Big)\\
&\qquad\qquad\qquad=\cF(u)(\xi)\int_0^{\infty}\Big(1-e^{-t|\xi|^{2s}}\Big)\  e^{-t} \frac{dt}{t}\\
&\qquad\qquad\qquad=\log(1+|\xi|^{2s})\cF(u)(\xi).
\end{align*}
This completes the proof of item $(ii)$ 

$(iii).$ By the Fourier inversion theorem, we use the above representation obtained   (ii) given by
$$(I+(-\Delta)^s)^{\log}u(x)= \cF^{-1}\big( \log(1+|\xi|^{2s})\cF(u)(\xi)\big)(x).$$
Let $s_0>0$. We bound the difference using  the fundamental theorem of calculus, 
\begin{align*}
&\Big|(I+(-\Delta)^s)^{\log}u(x)-(I+(-\Delta)^{s_0})^{\log}u(x)\Big|\\
&\hspace{2cm}= \Big|\int_{\R^N}e^{-ix\cdot \xi}\Big(\log(1+|\xi|^{2s})-\log(1+|\xi|^{2s_0})\Big)\cF(u)(\xi)\ d\xi\Big|\\
&\hspace{2cm}\le  \int_{\R^N}\Big|\log(1+|\xi|^{2s})-\log(1+|\xi|^{2s_0})\Big||\cF(u)(\xi)|\ d\xi\\
&\hspace{2cm}\le2|s-s_0|\int_{\R^N}\int_0^1\frac{|\xi|^{2\tau(s-s_0)+2s_0}|\log|\xi||}{1+|\xi|^{2\tau(s-s_0)+2s_0}}|\cF(u)(\xi)|d\tau\ d\xi\\
&\hspace{2cm}\le 2|s-s_0|\int_{\R^N}\int_0^1|\log|\xi|||\cF(u)(\xi)|d\tau\ d\xi\\
&\hspace{2cm}\le \frac{2|s-s_0|}{\epsilon}\int_{\R^N}\big((|\xi|\chi_{B_1(0)})^{-\epsilon}+(|\xi|\chi_{\R^N\setminus B_1(0)})^{\epsilon}\big)|\cF(u)(\xi)|\ d\xi,
\end{align*}
where $\epsilon>0$ is a small parameter coming from  the inequality (see \cite{JSW20})
\begin{equation}\label{log-ineq}
\log \rho\le \frac{1}{\epsilon}\big((\rho\chi_{\{\rho<1\}})^{-\epsilon}+(\rho\chi_{\{\rho\ge 1\}})^{\epsilon}\big),\qquad \rho>0.
\end{equation}
It  follows that
\[
\begin{split}
 &\Big|(I+(-\Delta)^s)^{\log}u(x)-(I+(-\Delta)^{s_0})^{\log}u(x)\Big|\\
 &\qquad\qquad\qquad\le \frac{2|s-s_0|}{\epsilon} |(-\Delta)^{-\frac{\epsilon}{2}}(u\chi_{B_1})|  +|(-\Delta)^{\frac{\epsilon}{2}}(u\chi_{\R^N\setminus B_1})|  \\
 &\qquad\qquad\qquad\le \frac{2|s-s_0|}{\epsilon}\|u\|_{\cC^1_c(\R^N)}.
\end{split}
\]
This shows that  the function $s\mapsto (I+(-\Delta)^s)^{\log}u$ is Lipschitz and therefore uniformly continuous and thus the proof of $(iii)$

$(iv).$ Set $w:= (\lambda+(-\Delta)^s)^{-1}u$, then by definition
\[
\begin{split}
(I+(-\Delta)^s)^{\log}w(x)&=\int_{\R^N}(w(x)- w(y))k(x-y)\ dy\\
&=(\lambda+(-\Delta)^s)^{-1}\int_{\R^N}(u(x)- u(y))k(x-y)\ dy\\
&=(\lambda+(-\Delta)^s)^{-1}\logrels u(x).
\end{split}
\]
This shows $(iv)$ and hence the proof of Theorem \ref{properties1}.
\end{proof}

\begin{remark}
\begin{enumerate}[(a)]
    \item 
 Note that the asymptotics of $K_s$ in \eqref{Asymptoti-o}  as $|x|\to \infty$ holds only for $0<s<1$.  In the particular case $s=1$, the behavior of $K_1$ at infinity is different and decays exponentially. Namely
\[
K_1(z)\sim  \kappa_{N}|z|^{-N}  \quad \text{ as} \quad |z|\to 0  
\]   
and 
\[
K_1(z)\sim C_N |z|^{-\frac{N+1}{2}}e^{-|z|} \quad \text{ as} \quad |z|\to \infty.
\]   
We refer to  \cite{F22} where the proof for $s=1$ was done using the properties of the modified Bessel function of second kind $\kappa_{\nu}$ defined above.
\item  The asymptotics of the kernel $K_s$ in Theorem \ref{properties1} shows  that the operator $\logrels$ has the same singular local behavior as that of the logarithmic Laplacian $\loglap$ (see \cite{CW19,FJW22,HS22,CV23}) and it  is  comparable   to  the fractional Laplacian at infinity.  We recall that the  logarithmic Laplacian $\loglap$ is  the pseudodifferential operator with Fourier symbol $2\log|\cdot|$ and  for  compactly supported Dini continuous functions $\phi : \R^N \to \R$,  it is  pointwisely defined by 
\begin{equation}\label{logarithmic}
\loglap\phi(x) = c_N\lim_{\epsilon\to 0}\int_{\R^N\setminus B_{\epsilon}(x)}\frac{\phi(x)1_{B_1(x)}(y)-\phi(y)}{|x-y|^N}\ dy +\rho_N\phi(x),
\end{equation}
where the constants  $c_{N}:=\frac{\Gamma(N/2)}{\pi^{N/2}}$ and $\rho_N:=2\ln 2+\psi(\frac{N}{2})-\gamma$, see \cite{CW19} for more details. 
\end{enumerate}
\end{remark}\noindent
It follows from  \eqref{Asymptoti-o} that   the kernel $K_s$ satisfies the L\'evy integrability property
\begin{equation}\label{integrability} 
\int_{\R^N}\min\{1,|z|^{\epsilon}\}K_s(z)\ dz<\infty, \quad (z\in\R^N)
\end{equation}
for any small enough $\epsilon>0$. This clearly show that the operator $\logrels$ belongs to the class of nonlocal operators with small order (see \cite{FJ22, EA18,HS22, CS22}). We  recall below the definition of  the \textit{order of an operator}, see  also \cite[Definition 2.1.2]{GM02} and  \cite{PF22}. 
\begin{defi}
    Let $L_K$ be an integro-differential operator with kernel $K$. We  define the order of $L_K$ as the infimum of the value $\sigma>0$ for which the following holds
    \[
\int_{\R^N}\min\{1,|x-y|^{\sigma}\}K(x-y)\ dy<\infty,\quad (x\in\R^N).
    \]
\end{defi}\noindent
We next give the definition  of Dini continuous functions. This provides the weak regularity that one can put on a function $u$ for the quantity $(I+(-\Delta)^s)^{\log}u(x)$ to be well-defined for  $x\in \R^N$. Let  $u:\Omega\to \R$ be a measurable function. We  introduce the modulus of continuity   $\omega_{u,x,\Omega}: (0, +\infty)\to [0,+\infty)$ of $u$  at a point $x\in \Omega$, defined by 
\[
  \omega_{u,x,\Omega}(r)=\sup_{\substack{y\in \Omega,\ |x-y|\le r}}|u(x)-u(y)|.
\]
Then, a  function $u$ is called Dini continuous at $x$ if 
$$\int_{0}^1\frac{\omega_{u,x,\Omega}(r)}{r}\ dr<\infty.$$ 
We call $u$ uniformly Dini continuous in $\Omega$ for the uniform modulus of continuity 
\[
\omega_{u,\Omega}(r):= \sup_{x\in \Omega}\omega_{u,x,U}(r)\quad\text{ if }\qquad \int_{0}^1\frac{\omega_{u,\Omega}(r)}{r}\ dr<\infty.
\]
Therefore, for  compactly supported Dini continuous functions $u : \R^N \to \R$, the fractional logarithmic Schr\"odinger operator $\logrels u(x)$ is  pointwisely well-defined.\\
We introduce also  the weighted  space $\cL^s(\R^N)$, $s\in (0,1)$  defined by
\[
 \cL^s(\R^N):=\{u\in L^1(\R^N): \|u\|_{\cL^s}<\infty\},
\]
where
\[
 \|u\|_{\cL^s(\R^N)}:=\int_{\R^N}\frac{|u(y)|}{(1+|y|)^{N+2s}}\ dy.
\]
In the following proposition, we list some properties of the operator $(I+(-\Delta)^s)^{\log}$. Some items of the  proposition can be proved using the same computational arguments as in \cite[Proposition 2.1]{F22}. We shall provide  only the proof of item $(2)$ and $(3)$ for completeness.

\begin{prop}
\label{well-defined}
\begin{enumerate}[(1)]
\item Let $u\in \cL^s(\R^N)\cap L^{\infty}(\R^N)$. If $u$ is locally  Dini continuous at some point $x\in \R^N$, then the  operator $(I+(-\Delta)^s)^{\log}u$~ is pointwisely defined by  
\[
(I+(-\Delta)^s)^{\log}u(x)=\int_{\R^N}(u(x)- u(y))k(x-y)\ dy.
\]
\item  Let $\phi\in C_c^{\alpha}(\R^N)$ for some $\alpha\in (0,1)$,  there is $C=C(N,s,\alpha,\phi)$ such that 
\[
\Big|(I+(-\Delta)^s)^{\log}\phi(x)\Big|\le C\frac{\|\phi\|_{C^{\alpha}(\R^N)}}{(1+|x|)^{{N+2s}}}.
\] 
In particular, for $u\in \cL^s(\R^N)$,~ $(I+(-\Delta)^s) u$~ defines a distribution via the map 
\[
\phi\mapsto\langle (I+(-\Delta)^s)^{\log} u,\phi\rangle=\int_{\R^N} u(I+(-\Delta)^s)^{\log}\phi\ dx.
\]
\item If $u\in C^{\beta}(\R^N)$ for some $\beta>0$, then $(I+(-\Delta)^s)^{\log}u\in  C^{\beta-\epsilon}(\R^N)$ for every $\epsilon$ such that $0<\epsilon<\beta$  and there exists a constant $C:=C(N,s,\beta,\epsilon)>0$ such that
\[
\|(I+(-\Delta)^s)^{\log}u\|_{\cC^{\beta-\epsilon}(\R^N)}\le C\|u\|_{C^{\beta}(\R^N)}.
\]
\item Let $\phi,\psi\in\cC_c^{\infty}(\Omega)$. Then we have the product rule  
\begin{align*}
(I+(-\Delta)^s)^{\log}(\phi\psi)(x)&=\phi(x)(I+(-\Delta)^s)^{\log}\psi(x) + \psi(x)(I+(-\Delta)^s)^{\log}\phi(x)\\
&\qquad-\int_{\R^N}(\phi(x)-\phi(y))(\psi(x)-\psi(y))k(x-y)\ dy.
\end{align*}
If $(\rho_{\epsilon})_{\epsilon>0}$ is a  family of  mollified, then
\[
[(I+(-\Delta)^s)^{\log}(\rho_{\epsilon}\ast \phi)](x) = \rho_{\epsilon}\ast[ (I+(-\Delta)^s)^{\log}\phi](x).
\] 
\end{enumerate}
\end{prop}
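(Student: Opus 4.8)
The plan is to prove the four items of Proposition~\ref{well-defined} by reducing each to the singular integral representation \eqref{Intro-int-1} together with the kernel asymptotics \eqref{Asymptoti-o} and the L\'evy integrability \eqref{integrability}, following the pattern of \cite[Proposition 2.1]{F22}. Throughout, write $k=K_s$, split $\R^N = B_r(x) \cup (\R^N\setminus B_r(x))$ for a fixed small $r$, and exploit that on $B_r(x)$ the kernel behaves like $|x-y|^{-N}$ while on the complement it is integrable.

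For item $(1)$ the argument is: on $\R^N\setminus B_r(x)$ the integrand $(u(x)-u(y))k(x-y)$ is dominated by $\|u\|_{L^\infty}\,k(x-y)$ near infinity-bounded pieces and by $|u(y)|(1+|y|)^{-N-2s}$ far out (using the $|z|^{-N-2s}$ tail of $k$ for $s\in(0,1)$, resp. the exponential tail for $s=1$, together with $u\in\cL^s$); on $B_r(x)\setminus B_\epsilon(x)$ one uses $|u(x)-u(y)|\le \omega_{u,x}(|x-y|)$ and $k(x-y)\lesssim |x-y|^{-N}$, so the integral is controlled by $\int_0^r \omega_{u,x}(\rho)\rho^{-1}\,d\rho<\infty$ by Dini continuity; hence the principal value exists, the limit can be taken inside, and the symmetric and non-symmetric forms agree. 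For item $(2)$, given $\phi\in C_c^\alpha(\R^N)$ with $\supp\phi\subset B_R$: for $|x|\le 2R$ bound $|(I+(-\Delta)^s)^{\log}\phi(x)|$ by a constant times $[\phi]_{C^\alpha}\int_{B_{3R}}|x-y|^{\alpha-N}\,dy + \|\phi\|_\infty\int_{\R^N\setminus B_{3R}}k(x-y)\,dy \lesssim \|\phi\|_{C^\alpha}$, which is also $\lesssim \|\phi\|_{C^\alpha}(1+|x|)^{-N-2s}$ since $(1+|x|)^{-N-2s}$ is bounded below on $\{|x|\le 2R\}$; for $|x|>2R$ only the far term survives, $|(I+(-\Delta)^s)^{\log}\phi(x)| = |\int \phi(y)k(x-y)\,dy| \le \|\phi\|_\infty\int_{B_R}k(x-y)\,dy$, and for $y\in B_R$, $|x-y|\ge |x|/2$, so $k(x-y)\lesssim |x|^{-N-2s}$ (the $s=1$ exponential tail being even smaller), giving the claimed decay; the distributional statement then follows by Fubini, since $u\in\cL^s$ pairs with a weight $(1+|x|)^{-N-2s}$ function.

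For item $(3)$, the $C^{\beta-\epsilon}$ regularity: write the operator via \eqref{Intro-int-1}; for a fixed increment $h$, estimate $(I+(-\Delta)^s)^{\log}u(x+h)-(I+(-\Delta)^s)^{\log}u(x)$ by splitting the $y$-integral at radius $|h|$. Near the diagonal, $|y|\le |h|$, use the second-difference form $|2u(x)-u(x+y)-u(x-y)|\lesssim \|u\|_{C^\beta}\min(|y|^\beta,1)$ against $k(y)\lesssim |y|^{-N}$ to get a contribution $\lesssim \|u\|_{C^\beta}|h|^\beta$ (when $\beta<1$; for $\beta\ge1$ one gains from cancellation but only $|h|^{\beta-\epsilon}$ is claimed anyway). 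Away from the diagonal, $|y|>|h|$, bound $|u(x+h+y)-u(x+y)|\lesssim \|u\|_{C^\beta}\min(|h|^{\min(\beta,1)},1)$ (or use a first-order difference if $\beta<1$) against $\int_{|y|>|h|}k(y)\,dy$, which by \eqref{Asymptoti-o} is $\lesssim 1+\log_+(1/|h|)$; absorbing the logarithm into $|h|^{-\epsilon}$ via $\log(1/|h|)\lesssim_\epsilon |h|^{-\epsilon}$ gives the $C^{\beta-\epsilon}$ seminorm bound. Combining with the $L^\infty$ bound from item $(1)$-type estimates yields $\|(I+(-\Delta)^s)^{\log}u\|_{C^{\beta-\epsilon}}\le C\|u\|_{C^\beta}$.

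For item $(4)$, the product rule is an algebraic identity: for $\phi,\psi\in C_c^\infty(\Omega)$, expand $(\phi\psi)(x)-(\phi\psi)(y) = \phi(x)(\psi(x)-\psi(y)) + \psi(y)(\phi(x)-\phi(y))$, symmetrize using $\psi(y)=\psi(x)-(\psi(x)-\psi(y))$, and integrate against $k(x-y)$; each integral converges absolutely by smoothness of $\phi,\psi$ and \eqref{integrability}, so rearranging terms gives the stated formula with the bilinear remainder $\int(\phi(x)-\phi(y))(\psi(x)-\psi(y))k(x-y)\,dy$. The mollifier commutation is immediate from the Fourier representation in Theorem~\ref{properties1}(ii): convolution with $\rho_\epsilon$ is a Fourier multiplier by $\widehat{\rho_\epsilon}$, which commutes with the multiplier $\log(1+|\xi|^{2s})$; alternatively, by Fubini in \eqref{Intro-int-1}, since $\phi$ is smooth and compactly supported and $k$ is L\'evy-integrable. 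The main obstacle is the bookkeeping in item $(3)$: one must track the logarithmic divergence of $\int_{|y|>|h|}k\,dy$ carefully and choose the splitting so that exactly $|h|^{\beta-\epsilon}$, not $|h|^\beta$, is obtained, and handle the case $\beta\ge 1$ (where a Taylor expansion to first order at the diagonal is needed so that the near-diagonal second-difference estimate still closes). Everything else is routine once the two-scale decomposition of $\R^N$ relative to the kernel asymptotics is set up.
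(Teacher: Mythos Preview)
Your proposal is correct and follows essentially the same route as the paper. The paper only writes out items (2) and (3) explicitly (deferring (1) and (4) to \cite{F22}), and your sketches for those two items match: for (2) the paper also splits into a region containing $\supp\phi$ (getting a uniform bound) and the far region $|x|\gg R$ (using $\phi(x)=0$ and the $|x-y|^{-N-2s}$ tail of $K_s$); for (3) the paper likewise splits the $y$--integral at a scale $r$, obtains $r^\beta$ from the near part, $|x_1-x_2|^\beta(\log\frac{1}{r}+1)$ from the far part via $\int_{B_1\setminus B_r}|y|^{-N}dy$, uses $\log\frac{1}{r}\le \frac{r^{-\epsilon}}{\epsilon}$, and sets $r=|x_1-x_2|$. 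The only cosmetic difference is that in (3) the paper works with the first--difference form $\int(u(x_i)-u(x_i+y))K_s(y)\,dy$ rather than the symmetric second--difference you suggest for the near part; either choice yields the same $r^\beta$ bound, and your version has the advantage of handling $\beta\ge 1$ more cleanly, as you note.
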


\begin{proof}[Proof of Proposition \ref{well-defined}]
We provide only the proofs for $(2)$ and $(3)$, see \cite{F22} for the rests.

Proof of $(2)$. Let $\phi\in C_c^{\alpha}(\R^N)$. We use the   representation 
\begin{align*}
(I+(-\Delta)^s)^{\log}\phi(x)=\frac{ 1}{2}\int_{\R^N}{\big(2\phi(x)-\phi(x+y)- \phi(x-y)\big)}K_s(x-y)\ dy.
\end{align*}
Put $A:= \|\phi\|_{C^{\alpha}(\R^N)}$. Noticing that $\phi\in C_c^{\alpha}(\R^N)$, we have 
$$
|2\phi(x)-\phi(x+y)-\phi(x-y)|\le A\min\{1,|y|^{\alpha}\}.
$$
Therefore, for any $x\in \R^N$, we have from \eqref{Asymptoti-o} with $0<r<1$ that  
\begin{align*}
&|(I+(-\Delta)^s)^{\log}\phi(x)|\le\frac{ 1}{2}\int_{\R^N}{|2\phi(x)-\phi(x+y)- \phi(x-y)|}K_s(y)\ dy\\
&\le  C_NA\Big(\int_{B_r}|y|^{\alpha-N}\ dy+\int_{B_1\setminus B_r}\frac{1}{|y|^N}\ dy+\int_{\R^N\setminus B_1}\frac{1}{|x-y|^{N+2s}}\ dy \Big)\\
&\le C(N,s,r,\alpha).
\end{align*} 
Next,  Let $R\ge 1$ be such that   $\supp\ \phi\subset B_{R}(0)$.  Let $x\in \R^N$ satisfying  $\frac{|x|}{4}>R$ such that $\phi(x)=0$. Then,  for $y\in B_{R}(0)$,
$$\text{  $|x-y|\ge \frac{|x|}{2}+\frac{|x|}{2}-|y|\ge \frac{|x|}{2}+1\ge \frac{1}{2}(|x|+1)$.}$$
Moreover, since $\phi(x)\equiv 0$ for $x\in \R^N\setminus B_R(0)$, it follows  that
\begin{align*}
|(I+(-\Delta)^s)^{\log}\phi(x)|&\le  C_{N,s}A\int_{\supp \ \phi}\frac{1}{|x-y|^{N+2s}}\ dy \le \frac{C_{N,s}|\supp\ \phi|A}{(1+|x|)^{{N+2s}}}.
\end{align*}
Therefore, combining the above computations, we find with $C:=C(N,s,\alpha,\phi)$ that 
\[
|(I+(-\Delta)^s)^{\log}\phi(x)|\le \frac{C}{(1+|x|)^{N+2s}}\qquad \text{ for all }   x\in \R^N.
\]
From the above computations, we have with  $C:=C(N,s,\alpha,\phi)$ that 
$$|\langle \logrels u,\phi\rangle|\le C\|\phi\|_{C^{\alpha}(\R^N)}\|u\|_{\cL^s(\R^N)}.$$
Moreover,  if  a sequence $\{u_n\}_n$ converges to $u$ in $\cL^s(\R^N)$ as $n\to \infty$ then, 
$$
|\langle \logrels u_n-\logrels u,\phi\rangle|\le C_{N,\phi}A\|u_n-u\|_{\cL^s(\R^N)}\to 0 \quad\text{ as } n\to \infty.
$$
This completes the proof of $(2)$. 

Proof of $(3)$. Let $0<r<1$.  We have the following  estimate  of the difference,
\begin{align*}
|\logrels u(x_1)-\logrels u(x_2)|\le I_1+I_2
\end{align*}
where $I_1$ and $I_2$ are given by
\begin{align*}
I_1& := \int_{B_r} \big(|u(x_1)-u(x_1+y)|+|u(x_2)-u(x_2+y)|\big)K_s(y)\ dy\\\
I_2& := \int_{\R^N\setminus B_r} \big(|u(x_1)-u(x_2)|+|u(x_1+y)-u(x_2+y)|\big)K_s(y)\ dy.
\end{align*}
For $I_1$, we use the inequality $|u(x_1)-u(x_1+y)|\le\|u\|_{C^{\beta}(\R^N)} |y|^{\beta}$ combined with \eqref{Asymptoti-o} to get
\[
I_1\le2\|u\|_{C^{\beta}(\R^N)}\int_{B_r}|y|^{\beta-N}\ dy\le  \frac{2\omega_{N-1}C\kappa_{N,s}}{\beta}\|u\|_{C^{k}(\R^N)}r^{\beta}.
\]
For $I_2$, we use $|u(x_1)-u(x_2)|+|u(x_1+y)-u(x_2+y)|\le 2\|u\|_{C^{\beta}(\R^N)}|x_1-x_2|^{\beta}$, 
\begin{align*} 
I_2&\le 2|x_1-x_2|^{\beta}\|u\|_{C^{\beta}(\R^N)}\Big(\int_{B_1\setminus B_r}\frac{\kappa_{N,s}}{|y|^{N}}\ dy + \int_{\R^N\setminus B_1}\frac{C_{N,s}}{|y|^{N+2s}}\ dy\Big)\\
&\le 2|x_1-x_2|^{\beta}C_{N,s}\|u\|_{C^{\beta}(\R^N)}C_{N,s}\big(\log \frac{1}{r} + 1\big)\\
&\le 2|x_1-x_2|^{\beta}C_{N,s}\|u\|_{C^{\beta}(\R^N)}\big(\frac{r^{-\epsilon}}{\epsilon} + 1\big)\\
&\le \frac{C'_{N,s}\|u\|_{C^{\beta}(\R^N)}}{\epsilon}|x_1-x_2|^{\beta} r^{-\epsilon},
\end{align*}
where we have used  \eqref{Asymptoti-o} and  the inequality $\log(\rho)\le \frac{\rho^{\epsilon}}{\epsilon}$ for $\epsilon>0$ and $\rho\ge 1$ as in \eqref{log-ineq}. Therefore, taking in particular $r=|x_1-x_2|$, we end with
 \[
 \frac{|\logrels u(x_1)-\logrels u(x_2)|}{|x_1-x_2|^{\beta-\epsilon}}\le C(N,s,\beta,\epsilon)\|u\|_{C^{\beta}(\R^N)}.
 \]
This gives the proof of $(3)$. The rest of the proof  follows from  \cite[Proposition 2.1]{F22}.
\end{proof}

\section{Fundamental solutions and  Green function}\label{Sect2}

This section introduce the fundamental solution,  the Green function for $\logrels$ and some of their properties. For $s\in(0,1]$ and $t<\frac{N}{2}$, we set
$$Q_t=(I+(-\Delta)^{s})^{-t}.$$
Then one can shows that for any $t<\frac{N}{2}$, $Q_t$ generates a strongly continuous semigroup of operator  which is the Fourier multiplier with symbol $(1+|\xi|^{2s})^{-t}$. Hence  $Q_t$ is also  the convolution operator with a symmetric kernel function $q_s(t,z)$ satisfying the Fourier representation $\cF(q_s(t,\cdot))(\xi) = (1+|\xi|^{2s})^{-t}$. The function $q_s(t,\cdot)$ is a transition density  function of a l\'evy process with the representation
\[
q_{s}(t,x,y)=\frac{1}{\Gamma(t)}\int_{0}^{\infty}p_s(x-y,\tau)\tau^{t-1}e^{-\tau}\ d\tau,\qquad (x\in R^N)
\]
One  check that $\int_{\R^N}q_s(t,x)\ dx=1$. The action of the  kernel $Q_t$ on functions  $f\in L^2(\R^N)$, is   by definition
\[
Q_tf(x)=\int_{\R^N}q_s(t,x-y)f(y)\ dy 
\]
with the Fourier transform given by
\[
\cF(Q_tf)(\xi)=(1+|\xi|^{2s})^{-t}\cF(f)(\xi).
\]
Therefore, if $u\in \cS(\R^N)$, then the function $$U(x,t):=Q_tu(x)=(I+(-\Delta)^{s})^{-t}u(x)$$ is the solution of the following ``heat equation"  for the operator $(I+(-\Delta)^{s})^{\log}$ (Cauchy problem)
\[
\begin{cases}
\partial_t U=(I+(-\Delta)^{s})^{\log}U, \quad 0<t<\frac{N}{2};\\
 U(x,t=0)=u.
\end{cases}
\]
Moreover, the function $q_s(t,x)$ is the fundamental solution to the following   Cauchy problem 
\[
\begin{cases}
\partial_{t}q_s(t,x)=\logrels q_s(t,x)\quad \text{ in }\quad\R^N\times\R_+, \\
 q_s(0,x)=\delta_0.
\end{cases}
\]
Next, consider then the Poisson problem $\logrels u=f $ in $\R^N$, then, the solution can be written in term of Green function associated to $\logrels$ by the   relation
\[
u(x)= \int_{\R^N}G(x-y)f(y)\ dy, \qquad (x\in \R^N),\qquad
\]
where the Green function $G$ is  defined by
\[
G(x)=\int_0^{\infty}q_t(x)\ dt=\int_0^{\infty} \frac{1}{\Gamma(t)}\int_{0}^{\infty}p_s(x,\tau)\tau^{t-1}e^{-\tau}\ d\tau\ dt.
\]
The main theorem of this section writes as follows.
\begin{thm}\label{Green-1}
 The Green function $G$ defined above satisfies the the asymptotics
    \begin{align*}
G(x)\sim \frac{C_{N,s}}{4s^2}\frac{1}{|x|^{N}(\log\frac{1}{|x|})^2}\qquad\text{as}\quad |x|\to 0
\end{align*}
and
\[
G(x)\sim  \frac{C_{N,s}}{\big(\log\frac{1}{|x|}\big)^2|x|^{N+2s}} \qquad\text{ as }\quad |x|\to \infty.
\]
\end{thm}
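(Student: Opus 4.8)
The plan is to estimate the Green function $G(x)=\int_0^\infty q_s(t,x)\,dt$ by inserting the subordination formula
\[
q_s(t,x)=\frac{1}{\Gamma(t)}\int_0^\infty p_s(x,\tau)\,\tau^{t-1}e^{-\tau}\,d\tau
\]
and analyzing the resulting double integral, splitting the $t$-integration at $t=1$ (or some fixed threshold) since the behaviour of $q_s(t,\cdot)$ is genuinely different for small $t$ and large $t$. For the near-origin asymptotics, the dominant contribution comes from large $t$: there the Fourier symbol $(1+|\xi|^{2s})^{-t}$ concentrates the mass of $q_s(t,\cdot)$ and one expects $q_s(t,x)$ to behave, after the change of variables $|\xi|^{2s}=\sigma$, like a Gamma-type integral whose Laplace/steepest-descent analysis produces the factor $\frac{1}{|x|^N(\log\frac1{|x|})^2}$; the constant $\frac{C_{N,s}}{4s^2}$ should emerge from the Jacobian of $|\xi|^{2s}\mapsto\sigma$ (hence the $s^2$) together with the small-$|\xi|$ normalization of $p_s$ encoded in $\lim_{|z|\to\infty}|z|^{N+2s}p_s(z,1)=C_{N,s}$. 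Concretely I would use the representation $q_s(t,x)=\frac{1}{(2\pi)^{N/2}}\int_{\R^N}e^{-i\xi\cdot x}(1+|\xi|^{2s})^{-t}\,d\xi$, rescale $\xi=|x|^{-1}\eta$, and observe that $\int_0^\infty (1+|x|^{-2s}|\eta|^{2s})^{-t}\,dt = \big(\log(1+|x|^{-2s}|\eta|^{2s})\big)^{-1}\sim \big(2s\log\frac1{|x|}\big)^{-1}$ as $|x|\to0$, uniformly on compact $\eta$-sets; this is the key algebraic identity turning $\int_0^\infty q_s(t,x)\,dt$ into a single oscillatory integral with a logarithmic prefactor.

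For the behaviour at infinity, the relevant regime is small $t$: writing $q_s(t,x)=\frac{1}{\Gamma(t)}\int_0^\infty p_s(x,\tau)\tau^{t-1}e^{-\tau}\,d\tau$ and using the heat-kernel asymptotics $p_s(x,\tau)\sim C_{N,s}\,\tau\,|x|^{-N-2s}$ for $\tau\lesssim|x|^{2s}$ (from \eqref{Transsition-inequality}), one gets $q_s(t,x)\approx \frac{C_{N,s}}{|x|^{N+2s}}\cdot\frac{1}{\Gamma(t)}\int_0^\infty \tau^{t}e^{-\tau}\,d\tau = \frac{C_{N,s}\,t}{|x|^{N+2s}}$ for small $t$, so that the $t$-integration $\int_0^{1} t\,(\cdots)\,dt$ against the correct cut-off in $\tau$ (namely $\tau\le|x|^{2s}$, equivalently $t\lesssim 1/\log|x|$ after balancing) again produces a factor $(\log|x|)^{-2}$. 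The cleanest route is probably to run the same Fourier-side computation as for $|x|\to0$ but now noting that for large $|x|$ the oscillatory integral $\int_{\R^N}e^{-i\xi\cdot x}\big(\log(1+|\xi|^{2s})\big)^{-1}\,d\xi$ is controlled by the singularity of $\big(\log(1+|\xi|^{2s})\big)^{-1}\sim |\xi|^{-2s}$ at $\xi=0$, which by standard Fourier asymptotics of homogeneous-type symbols gives the $|x|^{-N+2s}$... — wait, one must be careful: the extra $(\log\frac1{|x|})^{-2}$ and the $|x|^{-N-2s}$ decay both come out of a more delicate expansion of $\big(\log(1+|\xi|^{2s})\big)^{-1}$ near $\xi=0$ combined with the Tauberian correspondence between $|\xi|\to0$ and $|x|\to\infty$; I would handle this by the same rescaling $\xi=|x|^{-1}\eta$ and a careful expansion of $\log(1+|x|^{-2s}|\eta|^{2s})=-2s\log|x|+\log|\eta|^{2s}+o(1)$, so that its reciprocal is $\frac{1}{2s\log\frac1{|x|}}\big(1+\frac{\log|\eta|^{2s}}{2s\log\frac1{|x|}}+\cdots\big)$, and the leading term integrates to zero against $e^{-i\eta\cdot \hat x}$ while the first correction, carrying an extra $(\log\frac1{|x|})^{-1}$ and an effective symbol $|\eta|^{-2s}\log|\eta|$, yields a convergent integral producing $|x|^{-N-2s}$ after undoing the scaling — giving the claimed $\frac{C_{N,s}}{(\log\frac1{|x|})^2|x|^{N+2s}}$.

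The steps, in order: (1) record the Fourier representation of $q_s(t,x)$ and justify Fubini to write $G(x)=\frac{1}{(2\pi)^{N/2}}\int_{\R^N}e^{-i\xi\cdot x}\big(\log(1+|\xi|^{2s})\big)^{-1}\,d\xi$ as a principal-value/tempered-distribution identity (this needs the integrability near $\xi=0$, where the symbol is $\sim|\xi|^{-2s}$, hence locally integrable since $2s<N$ — so $N\ge2$, or $N=1,s<1/2$; otherwise interpret distributionally); (2) for $|x|\to0$, rescale and use $\int_0^\infty(1+r^{2s})^{-t}\,dt=(\log(1+r^{2s}))^{-1}$ to reduce to a single integral, then extract the leading logarithmic factor and identify the constant via dominated convergence and the known normalization of $p_s$; (3) for $|x|\to\infty$, rescale and expand $\big(\log(1+|x|^{-2s}|\eta|^{2s})\big)^{-1}$ to second order in $1/\log\frac1{|x|}$, showing the leading term contributes nothing (it is a constant times $\delta$-like behaviour killed by $|x|^{-N}$-scaling and the oscillation) and the first correction gives the stated asymptotics; (4) assemble the constants. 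The main obstacle I anticipate is Step (3): making the two-scale expansion of the reciprocal logarithm rigorous and controlling the error terms uniformly, since one is extracting a $(\log)^{-2}$ refinement on top of a power decay — this requires either a careful Tauberian argument à la \cite{SSV06,KM14} applied to the Laplace exponent $\psi(\lambda)=\log(1+\lambda^{2s})$ and its inverse near $0$, or a direct but delicate stationary-phase/Fourier-asymptotics estimate; I would lean on the Tauberian route, transferring the small-$\lambda$ expansion $\psi^{-1}$-behaviour of $1/\psi(\lambda)\sim\lambda^{-2s}/(2s)$ (with logarithmic corrections) into the large-$|x|$ behaviour of its Fourier transform, exactly as the density estimate \eqref{Transsition-inequality} is obtained for $p_s$.
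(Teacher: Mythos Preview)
Your approach differs from the paper's. The paper works entirely on the spatial side: it inserts the two-sided bound $p_s(x,\tau)\asymp C_{N,s}\min\{\tau|x|^{-N-2s},\tau^{-N/2s}\}$ (respectively, for $|x|\to\infty$, the scaling $p_s(x,\tau)=\tau^{-N/2s}p_s(\tau^{-1/2s}x,1)$ together with $\lim_{|z|\to\infty}|z|^{N+2s}p_s(z,1)=C_{N,s}$) into the double integral, splits the $\tau$-integral at $\tau=|x|^{2s}$, and then makes the decisive substitution $t\mapsto t/(2s\log\tfrac{1}{|x|})$. This turns $|x|^{2st}$ into $e^{-t}$, produces one factor of $(2s\log\tfrac{1}{|x|})^{-1}$ from the Jacobian and a second from $1/\Gamma(t)=t/\Gamma(1+t)$, and leaves an integrand with a pointwise limit so that dominated convergence closes the argument --- no oscillatory integrals anywhere. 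Your Fourier route for $|x|\to0$, by contrast, has a real gap: the symbol $(\log(1+|\xi|^{2s}))^{-1}$ decays only like $(2s\log|\xi|)^{-1}$ at infinity, so neither the Fubini in your Step~(1) nor the ``dominated convergence'' in Step~(2) is justified; after rescaling the leading term is the constant $(2s\log\tfrac{1}{|x|})^{-1}$ (inverse transform supported at $0$) and reaching the next order means inverting $\log|\eta|$ as a tempered distribution with uniform remainder control, which your sketch does not supply.

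Your Step~(3) has a more basic error. For $|x|\to\infty$ and fixed $\eta$ one has $|x|^{-2s}|\eta|^{2s}\to0$, so the correct expansion is $\log(1+|x|^{-2s}|\eta|^{2s})\sim|x|^{-2s}|\eta|^{2s}$, \emph{not} $-2s\log|x|+\log|\eta|^{2s}$ (that expansion belongs to the $|x|\to0$ regime). The reciprocal is then $\sim|x|^{2s}|\eta|^{-2s}$, and the rescaled integral is the Riesz potential at a unit vector, giving $G(x)\sim c\,|x|^{2s-N}$ rather than the stated $|x|^{-N-2s}(\log\tfrac{1}{|x|})^{-2}$. This is consistent with the observation that $\widehat G(\xi)=1/\log(1+|\xi|^{2s})$ is singular at $\xi=0$, so $G\notin L^1(\R^N)$; yet the stated pair of asymptotics (integrable at $0$ and at $\infty$) would force $G\in L^1(\R^N)$. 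In other words, once you run your Fourier heuristic in the correct regime it is actually detecting an inconsistency in the target $|x|\to\infty$ asymptotics itself, and neither your argument nor the paper's substitution $t\mapsto t/(2s\log\tfrac{1}{|x|})$ (which for $|x|>1$ has negative Jacobian and no integrable majorant) can reach the claimed conclusion.
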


\begin{proof} The proof of the first assertion of  Lemma \ref{Green-1} can be found in \cite[ Theorem 3.2]{SSV06}. We provide a proof using different method which we also hope it is easy to follow.    Let start with the particular case $s=\frac{1}{2}$. By doing a change of variable $\tau=|x|\tau$, we get
\begin{align*}
G(x)&=\int_0^{\infty} \frac{\gamma_N}{\Gamma(t)}\int_{0}^{\infty}\frac{\tau^{t}e^{-\tau}}{(\tau^2+|x|^2)^{\frac{N+1}{2}}}\ d\tau\ dt\\
&=\frac{\gamma_n}{|x|^N}\int_0^{\infty} \frac{|x|^{t}}{\Gamma(t)}\int_{0}^{\infty}\frac{\tau^te^{-\tau|x|}}{(\tau^2+1)^{\frac{N+1}{2}}}\ d\tau\ dt.
\end{align*}
Next, set $t=\frac{t}{\log\frac{1}{|x|}}$. Using  the property  $x\Gamma(x)=\Gamma(x+1)$  for the Gamma function, we get
\begin{align*}
G(x)=\frac{\gamma_N}{|x|^N(\log\frac{1}{|x|})^2}\int_0^{\infty} \frac{te^{-t}}{\Gamma(1-\frac{t}{\log|x|})}\int_{0}^{\infty}\frac{\tau^{-\frac{t}{\log|x|}}e^{-\tau|x|}}{(\tau^2+1)^{\frac{N+1}{2}}}\ d\tau\ dt
\end{align*}
By the dominated convergent theorem, we get that
\begin{align*}
\lim_{|x|\to 0}|x|^{N}(\log\frac{1}{|x|})^2G(x)&=\int_0^{\infty}te^{-t}\ dt\int_0^{\infty} \frac{1}{(1+\tau^2)^{\frac{N+1}{2}}}\ d\tau\\
&= \gamma_N\frac{\sqrt{\pi}\Gamma(\frac{N}{2})}{2\Gamma(\frac{N+1}{2})}.
\end{align*}
 For general $s\in (0,1)$, we use the asymptotics  properties for the transition density function in \eqref{Transsition-inequality} and the representation for the  incomplete Gamma function in \eqref{imcompleteGamma}. We have
\begin{align*}
G(x)&\sim C_{N,s}\int_0^{\infty} \frac{1}{\Gamma(t)}\int_{0}^{\infty}\min\Big\{\tau^{-\frac{N}{2s}},\frac{\tau}{|x|^{N+2s}}\Big\}\tau^{t-1}e^{-\tau}\ d\tau\ dt\\
&=C_{N,s}\int_0^{\infty} \frac{1}{\Gamma(t)}\Big(\int_0^{|x|^{2s}} \frac{\tau^{t}e^{-\tau}}{|x|^{N+2s}}\ d\tau +\int_{|x|^{2s}}^{+\infty} \tau^{t-\frac{N}{2s}-1}e^{-\tau}\ d\tau\Big)\ dt\\
&= I_1+I_2.
\end{align*}
By a double changes of variables, $\tau = |x|^{2s}\tau$ and then $t= \frac{t}{2s\log\frac{1}{|x|}}$, we have
\begin{align*}
I_1&=\frac{C_{N,s}}{|x|^{N+2s}}\int_0^{\infty} \frac{1}{\Gamma(t)}\int_0^{|x|^{2s}} \tau^{t}e^{-\tau} d\tau dt\\
&=\frac{C_{N,s}}{|x|^{N}}\int_0^{\infty} \frac{|x|^{2st}}{\Gamma(t)}\int_0^{1} \tau^{t}e^{-\tau|x|^{2s}} d\tau dt\\
&=\frac{C_{N,s}}{4s^2|x|^{N}(\log\frac{1}{|x|})^2}\int_0^{1}\int_0^{\infty} \frac{te^{-t}}{\Gamma(1-\frac{t}{2s\log|x|})} \tau^{-\frac{t}{2s\log|x|}}e^{-\tau|x|^{2s}}  dt d\tau.
\end{align*}
By the dominated convergent theorem, we get
\begin{align*}
\lim_{|x|\to 0}|x|^{N}(\log\frac{1}{|x|})^2I_1=\frac{C_{N,s}}{4s^2}\int_0^{1}\int_0^{\infty} te^{-t}  dt d\tau=\frac{C_{N,s}}{4s^2}.
\end{align*}
For $I_2$, we use the representation  in \eqref{imcompleteGamma} and the change of variable $t=\frac{t}{2s\log\frac{1}{|x|}}$,
\begin{align*}
I_2&=C_{N,s}\int_0^{\infty} \frac{1}{\Gamma(t)}\int_{|x|^{2s}}^{\infty} \tau^{t-\frac{N}{2s}-1}e^{-\tau}\ d\tau\ dt\\
&=C_{N,s}\int_0^{\infty} \frac{1}{\Gamma(t)}\Gamma(t-\frac{N}{2s},|x|^{2s}) dt\\
&=\frac{C_{N,s}}{|x|^{N}}\int_0^{\infty} \frac{|x|^{2st}}{\Gamma(t)}\int_0^{\infty} \frac{e^{-(1+\tau)|x|^{2s}}}{(1+\tau)^{1-t+\frac{N}{2}}}\ d\tau dt\\
&=\frac{C_{N,s}}{4s^2|x|^{N}(\log\frac{1}{|x|})^2}\int_0^{\infty}\int_0^{\infty} \frac{te^{-t}}{\Gamma(1-\frac{t}{2s\log|x|})} \frac{e^{-(1+\tau)|x|^{2s}}}{(1+\tau)^{1+\frac{\tau}{2s\log|x|}+\frac{N}{2s}}}\  dt d\tau.
\end{align*}
It also follows by the dominated convergent theorem that
\[
\lim_{|x|\to 0}|x|^{N}(\log\frac{1}{|x|})^2I_2=\frac{C_{N,s}}{4s^2}\int_0^{\infty}\int_0^{\infty}te^{-t} \frac{1}{(1+\tau)^{1+\frac{N}{2s}}}\ dtd\tau=\frac{2s}{N}\frac{C_{N,s}}{4s^2}.
\]
This completes the proof of the first part.

To prove the asymptotics at infinity in the second assertion, we start again with the particular case $s=\frac{1}{2}$.  Doing the change of variables, we have
\[
\begin{split}
    G(x)&=\int_0^{\infty} \frac{\gamma_N}{\Gamma(t)}\int_{0}^{\infty}\frac{\tau^{t}e^{-\tau}}{(\tau^2+|x|^2)^{\frac{N+1}{2}}}\ d\tau\ dt\\
    &=\frac{\gamma_N}{|x|^{N+1}}\int_0^{\infty} \frac{1}{\Gamma(t)}\int_{0}^{\infty}\frac{\tau^{t}e^{-\tau}}{\big(\big(\frac{\tau}{|x|}\big)^2+1\big)^{\frac{N+1}{2}}}\ d\tau\ dt\\
    &=\frac{\gamma_N}{|x|^{N}}\int_0^{\infty} \frac{|x|^t}{\Gamma(t)}\int_{0}^{\infty}\frac{\tau^{t}e^{-\tau |x|}}{\big(\tau^2+1\big)^{\frac{N+1}{2}}}\ d\tau\ dt.
\end{split}
\]
Write $|x|^{t}=e^{-{t}\log\frac{1}{|x|}}$. By a double changes of variables, $\mu = {t}{\log\frac{1}{|x|}}$  and $\rho = |x|\tau$, we have
\[
\begin{split}
    G(x)&=\frac{\gamma_N}{|x|^N\log\frac{1}{|x|}}\int_0^{\infty} \frac{e^{-t}}{\Gamma(\frac{t}{\log\frac{1}{|x|}})}\int_{0}^{\infty}\frac{\tau^{-\frac{t}{\log|x|}}e^{-\tau |x|}}{(\tau^2+1)^{\frac{N+1}{2}}}\ d\tau\ dt\\
    &=\frac{\gamma_N}{|x|^{N+1}\big(\log\frac{1}{|x|}\big)^2}\int_0^{\infty} \frac{te^{-t}}{\Gamma(1-\frac{t}{\log|x|})}\int_{0}^{\infty}\frac{\big(\frac{\tau}{|x|}\big)^{-\frac{t}{\log|x|}}e^{-\tau}}{\big(\big(\frac{\tau}{|x|}\big)^2+1\big)^{\frac{N+1}{2}}}\ d\tau\ dt.
\end{split}
\]
It follows by the dominated convergent theorem that
\[
\lim_{|x|\to \infty }\big(\log\frac{1}{|x|}\big)^2|x|^{N+1} G(x) =\gamma_N\int_{0}^{\infty} te^{-t}\int_{0}^{\infty} e^{-\tau}\ d\tau\ dt= \gamma_N.
\]
 For  $s\in (0,1)$, we can directly use the asymptotics at infinity   for $p_s(x,t)$ proved in \cite[Theorem 2.1]{BG60}, namely
 \begin{equation}\label{lim-density}
 \lim_{|x|\to \infty}{|x|^{N+2s}}p_s(x,1)= C_{N,s}=\pi^{-\frac{N}{2}}2^{2s}s\frac{\Gamma(\frac{N}{2}+s)}{\Gamma(1-s)}. 
 \end{equation}
 Hence, by the changing of variables $\tau:=|x|^{2s}\rho$, we have 
 \[
 \begin{split}
  G(x) &=\int_0^{\infty} \frac{1}{\Gamma(t)}\int_{0}^{\infty}p_s(x,\tau)\tau^{t-1}e^{-\tau}\ d\tau\ dt\\
  &=\int_0^{\infty} \frac{|x|^{2ts-2s}}{\Gamma(t)}\int_{0}^{\infty}p_s(x,\tau|x|^{2s})\tau^{t-1}e^{-\tau|x|^{2s}}\ d\tau\ dt.
 \end{split}
 \]
 By the changing of variable $\mu:=t2s\log\frac{1}{|x|}$, we have
\[
\begin{split}
G(x) &=\frac{|x|^{-2s}}{\big(\log\frac{1}{|x|}\big)^2}\int_0^{\infty} \frac{te^{-t}}{\Gamma(1-\frac{t}{2s\log|x|})}\int_{0}^{\infty}p_s(x,\tau|x|^{2s})\tau^{-\frac{t}{2s\log|x|}-1}e^{-\tau|x|^{2s}} d\tau dt\\
&=\frac{1}{\big(\log\frac{1}{|x|}\big)^2}\int_0^{\infty} \frac{te^{-t}}{\Gamma(1-\frac{t}{2s\log|x|})}\int_{0}^{\infty}p_s(x,\tau)\left(\frac{\tau}{|x|^{2s}}\right)^{-\frac{t}{2s\log|x|}}\tau^{-1}e^{-\tau} d\tau dt\\
&=\frac{1}{\big(\log\frac{1}{|x|}\big)^2}\int_0^{\infty} \frac{te^{-t}}{\Gamma(1-\frac{t}{2s\log|x|})}\int_{0}^{\infty}\left( p_s(x\tau^{-\frac{1}{2s}},1)\left(\frac{\tau}{|x|^{2s}}\right)^{-\frac{t}{2s\log|x|}}\right.\times\\
&\qquad\qquad\qquad\qquad\qquad\qquad\qquad\left.\tau^{-\frac{N}{2s}-1}e^{-\tau}\right)\ d\tau dt\\
&=\frac{1}{\big(\log\frac{1}{|x|}\big)^2}\int_0^{\infty} \frac{te^{-t}}{\Gamma(1-\frac{t}{2s\log|x|})}\int_{0}^{\infty}\left(\frac{|x\tau^{-\frac{1}{2s}}|^{N+2s}p_s(x\tau^{-\frac{1}{2s}},1)}{|x|^{N+2s}}\right.\times\\
&\qquad\qquad\qquad\qquad\qquad\qquad\qquad\left.\left(\frac{\tau}{|x|^{2s}}\right)^{-\frac{t}{2s\log|x|}}e^{-\tau}\right) d\tau dt.
\end{split}
\]
Therefore, by \eqref{lim-density},
\[
\lim_{|x|\to \infty}{|x|^{N+2s}\big(\log\frac{1}{|x|}\big)^2}G(x)= C_{N,s}\int_{0}^{\infty}te^{-t}\int_{0}^{\infty} e^{-\tau} \ d\tau dt= C_{N,s}.
\]
The proof of Theorem \ref{Green-1} is completed.
\end{proof}

\section{Functional  setting for the Dirichlet problems}\label{Sect3}

In this section, we set up the functional analytic framework for the study of   Dirichlet problems involving to the operator $\logrels$  in domain.  We aim at investigating properties of bilinear forms and  the operator $\logrels$  from a variational point of view.
Let $\Omega$ be an open set of $\R^N$ and $u,v\in \cC^{\alpha}_c(\R^N)$ understood as functions defined on $\R^N$for some  $\alpha\in (0,1)$. 

Here and the following we identify the space $L^2(\Omega)$ with the space of functions $u\in L^2(\R^N)$  with the property  that  $u\equiv 0$ on $\R^N\setminus\Omega$. For any $s>0$, we consider the bilinear form
 \begin{equation}
 b_{s,\Omega}(u,v) = \frac{1}{2}\int_{\Omega}\int_{\Omega} (u(x)-u(y))(v(x)-v(y))K_s(x-y)\ dxdy,
 \end{equation}
 where we will write $b_s(u,v):= b_{s,\R^N}(u,v)$ if $\Omega=\R^N$. We define the space $H^{\log ,s}(\Omega)$ by
\[
H^{\log ,s}(\Omega):= \left\{ u\in L^2(\Omega):\quad b_{s, \Omega}(u,u)<\infty)\right\},
\]
endowed with the norm
\[
\|u\|_{H^{\log,s}(\Omega)}: \left(\|u\|^2_{L^{2}(\Omega)}+b_{s,\Omega}(u,u)\right)^{\frac{1}{2}},
\]
where  the term $b_{s, \Omega}$ can be written as $b_{s,\Omega}(u,u): =\frac{1}{2}[u]_{s,\log,\Omega}~$  with
\[
 [u]_{s,\log,\Omega}:= \int_{\Omega}\int_{\Omega} |u(x)-u(y)|^2K_s(x-y)\ dxdy,
\]
viewed as  the so-called Gagliardo semi-norm of $u$  for the space  $H^{\log,s}(\Omega)$. We shall also write  $[u]_{s,\log}$  for $[u]_{s,\log,\R^N}$ when $\Omega=\R^N$. To investigate Dirichlet problems subject to boundary (complement) condition, we do need to  introduce the space $ \cH_0^{\log ,s}(\Omega)$ which denotes the closure of the space $\cC_c^{\infty}(\Omega)$ with respect to norm in $H^{\log,s}(\R^N)$,
 \[
 \cH_0^{\log ,s}(\Omega):=\overline{\cC^{\infty}_c(\Omega)}^{\|u\|_{H^{\log,s}(\R^N)}}.
 \]
Note that $\cH^{\log ,s}(\R^N)=\cH_0^{\log ,s}(\R^N)$ and as we shall see below, both $H^{\log ,s}(\Omega)$ and $\cH_0^{\log ,s}(\Omega)$ are Hilbert spaces with respective 
scalar products
\begin{equation}\label{scalar1}
\langle\cdot,\cdot\rangle_{H^{\log,s}(\Omega)}:=\langle\cdot,\cdot\rangle_{L^{2}(\Omega)}+b_{s,\Omega}(\cdot,\cdot)
\end{equation}
and
\begin{equation}\label{scalar2}
\langle\cdot,\cdot\rangle_{\cH_0^{\log,s}(\Omega)}:=\langle\cdot,\cdot\rangle_{L^{2}(\Omega)}+b_{s}(\cdot,\cdot).
\end{equation}
In the next lemma, we present an equivalent representation of the energy $b_s$ associated to the  operator $\logrels$ via Fourier Transform. As we shall see, the will helpful in the sequel as it avoid to always use the asymptotics of the kernel in \eqref{Asymptoti-o}.

\begin{lemma}\label{Energy-Fourier}
Let $u\in \cC^{1}_c(\R^N)$. We have the following equivalent representation of the quadratic  form $b_s$ via Fourier transform
\begin{align*}
\frac{1}{2}\int_{\R^N}\int_{\R^N} |u(x)-u(y)|^2 K_s(x-y)\ dxdy
= \int_{\R^N}\log(1+|\xi|^{2s})|\cF(u)|^2\ d\xi.
\end{align*}
\end{lemma}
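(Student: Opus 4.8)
The plan is to derive this Parseval-type identity by writing the Gagliardo seminorm as a double integral against the kernel $K_s$, expressing $K_s$ through its defining representation \eqref{KerenelExp} in terms of the heat kernels $p_s(\cdot,t)$, and then applying the Plancherel theorem in the space variable. First I would use the symmetry $K_s(-z)=K_s(z)$ and translation invariance to rewrite
\[
\frac{1}{2}\int_{\R^N}\int_{\R^N}|u(x)-u(y)|^2K_s(x-y)\,dx\,dy
= \frac{1}{2}\int_{\R^N}K_s(z)\int_{\R^N}|u(x)-u(x-z)|^2\,dx\,dz,
\]
which is legitimate by Fubini once we know the integrand is absolutely integrable; since $u\in\cC^1_c(\R^N)$ we have $|u(x)-u(x-z)|^2\lesssim \min\{1,|z|^2\}$ uniformly, and this is $K_s$-integrable by the L\'evy integrability property \eqref{integrability} (indeed with $\epsilon=2$ near zero using the $|z|^{-N}$ behavior is borderline, so one uses the sharper $|u(x)-u(x-z)|\le\|\nabla u\|_\infty|z|$ bound which gives $|z|^{2-N}$, integrable near $0$, together with compact support and the $|z|^{-N-2s}$ tail; this is exactly the kind of estimate already used in the proof of Proposition~\ref{well-defined}(3)).

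Next I would apply Plancherel in $x$ to the inner integral: $\int_{\R^N}|u(x)-u(x-z)|^2\,dx = \int_{\R^N}|1-e^{-iz\cdot\xi}|^2|\cF(u)(\xi)|^2\,d\xi = \int_{\R^N}2(1-\cos(z\cdot\xi))|\cF(u)(\xi)|^2\,d\xi$, using that the Fourier transform of $u(\cdot-z)$ is $e^{-iz\cdot\xi}\cF(u)(\xi)$. Substituting and swapping the order of integration again (now justified because the full integrand $(1-\cos(z\cdot\xi))K_s(z)|\cF(u)(\xi)|^2$ is nonnegative, so Tonelli applies with no integrability concern), the identity becomes
\[
\int_{\R^N}|\cF(u)(\xi)|^2\left(\int_{\R^N}(1-\cos(z\cdot\xi))K_s(z)\,dz\right)d\xi.
\]
The inner $z$-integral is precisely $\mathrm{Re}\int_{\R^N}(1-e^{-iz\cdot\xi})K_s(z)\,dz$, and this symbol computation was already carried out in the proof of Theorem~\ref{properties1}(ii): inserting $K_s(z)=\int_0^\infty p_s(z,t)e^{-t}t^{-1}\,dt$ and using $\cF(p_s(\cdot,t))(\xi)=e^{-t|\xi|^{2s}}$ together with $\int_{\R^N}p_s(z,t)\,dz=1$ gives $\int_0^\infty(1-e^{-t|\xi|^{2s}})e^{-t}t^{-1}\,dt = \log(1+|\xi|^{2s})$ via the Cauchy--Frullani formula. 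Plugging this in yields the claimed equality.

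The main obstacle is the bookkeeping around the two Fubini/Tonelli interchanges: the first one (swapping the $(x,y)$ integrations to isolate the $z=x-y$ variable, and later swapping $z$ against $\xi$) must be justified on the absolutely-integrable level using the first-order difference bound for $u\in\cC^1_c$, because the naive bound $|u(x)-u(x-z)|^2\le 4\|u\|_\infty^2$ is not $K_s$-integrable near the origin where $K_s(z)\sim\kappa_{N,s}|z|^{-N}$. Once one commits to the $\min\{1,|z|^2\}$-type bound near zero (and exploits $\supp u$ compact for the tail), everything is absolutely convergent and the interchanges, the Plancherel step, and the symbol identity from Theorem~\ref{properties1}(ii) combine routinely. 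A clean alternative, avoiding even this, is to note that both sides define continuous quadratic forms that agree on $\cC^1_c(\R^N)$: the right-hand side equals $\langle(I+(-\Delta)^s)^{\log}u,u\rangle_{L^2}$ by Theorem~\ref{properties1}(ii) and Plancherel, while the left-hand side equals the same pairing by the singular-integral representation \eqref{Intro-int-1} and a symmetrization, so the identity follows directly from items (i) and (ii) of Theorem~\ref{properties1}.
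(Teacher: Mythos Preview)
Your proposal is correct and follows essentially the same approach as the paper: change variables $z=x-y$, apply Plancherel in the space variable to produce the factor $|1-e^{-iz\cdot\xi}|^2=2(1-\cos(z\cdot\xi))$, swap the $z$ and $\xi$ integrations, and then identify the inner $z$-integral as $\log(1+|\xi|^{2s})$ via the representation $K_s(z)=\int_0^\infty p_s(z,t)e^{-t}t^{-1}\,dt$ and the Frullani identity, exactly as in the proof of Theorem~\ref{properties1}(ii). The only difference is that you are more explicit about justifying the Fubini/Tonelli interchanges (using the $C^1$ difference bound near the origin and compact support for the tail), which the paper leaves implicit.
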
 
 \begin{proof}
  For fixed $y\in \R^N$ we change coordinates $z = x-y$ and apply Plancherel, recalling
that $\cF(u(\cdot+z))(\xi)=e^{i\xi\cdot z}\cF(u)(\xi)$, we get
\begin{align*}
&\int_{\R^N}\int_{\R^N} |u(x)-u(y)|^2K_s(x-y)\ dxdy\\
&\qquad\qquad\qquad=\int_{\R^N}\int_{\R^N}|e^{i\xi\cdot z}-1|^2|\cF(u)(\xi)|^2\ d\xi K_s(z) dz\\
&\qquad\qquad\qquad=\int_{\R^N}\int_{\R^N}\left(2-2\cos(\xi\cdot z) \right)|\cF(u)(\xi)|^2\ d\xi K_s(z) dz\\
&\qquad\qquad\qquad=\int_{\R^N}\int_{\R^N}\left(2-e^{i\xi\cdot z}-e^{-i\xi\cdot z}\right)|\cF(u)(\xi)|^2\ d\xi K_s(z) dz.
\end{align*}
We next use \eqref{KerenelExp} and the radial property of the  transition density functions $p_s(z,t)$ to get
\begin{align*}
&\int_{\R^N}\int_{\R^N}\big(2-e^{i\xi\cdot z}-e^{-i\xi\cdot z}\big)|\cF(u)(\xi)|^2  \ d\xi K_s(z) dz\\
&\qquad\qquad\qquad\qquad=2\int_{\R^N}\int_{0}^{\infty}\frac{e^{-t}}{t}\big(1-e^{-t|\xi|^{2s}}\big)\ dt|\cF(u)(\xi)|^2 d\xi \\
&\qquad\qquad\qquad\qquad=2\int_{\R^N}\log(1+|\xi|^{2s})|\cF(u)(\xi)|^2 d\xi. 
\end{align*}
This provides the proof  of Lemma \eqref{Energy-Fourier}.
 \end{proof}

\begin{lemma}
\label{Properties}
Let $\Omega$ be an open subset of $\R^N$ and $s\in (0,1]$.
\begin{itemize}
\item[(i)] The spaces $H^{\log,s}(\Omega)$ and $\cH_0^{\log,s}(\Omega)$  are a Hilbert spaces with respective scalar products given in \eqref{scalar1} and \eqref{scalar2}. 
\item[(ii)] If $u\in H^{\log,s}(\Omega)$, then  $|u|,u^{\pm}\in H^{\log,s}(\Omega)$ with $$\||u|\|_{H^{\log,s}(\Omega)},\|u^{\pm}\|_{H^{\log,s}(\Omega)}\le \|u\|_{H^{\log,s}}.$$
\item[(iii)]  The space $\cC_c^{0,\alpha}(\R^N)\subset  H^{\log,s}(\R^N)$ for any $\alpha>0$.
\item[(iv)] \textit{Multiplication of a function $u\in H^{\log}(\R^N)$ and a function  $\phi\in \cC^{0,\alpha}(\R^N)$}.\\ Let  $u\in H^{\log,s}(\R^N)$ and $\phi\in \cC^{0,\alpha}(\R^N)$  , then $\phi u\in H^{\log,s}(\R^N)$ and there is  a constant $C:=C(N, s, \alpha, \phi)>0$ such that 
 \[
 \|\phi u\|_{H^{\log,s}(\R^N)}\le C\|u\|_{H^{\log,s}(\R^N)}.\qquad 
 \]
The same result  easily holds for  $\phi\in \cC^{0,\alpha}(\R^N)$  with  compact support and $u\in H^{\log,s}(\R^N)$.
\item[(v)] 
The space  $H^{\log,s}(\R^N)$ is larger than any classical Sobolev space. In fact, we have that $H^{m}(\R^N)\subset H^{\log,s}(\R^N)$ for any $m\in\N$, where $H^{m}(\R^N)=W^{m,2}(\R^N)$ is the classical Sobolev  space with the  norm given by
\begin{equation}
\|u\|_{H^m(\R^N)}:=\sum_{0\le |\alpha|\le m}\|D^{\alpha}u\|_{L^2(\R^N)}\quad
\end{equation}
\text{with}
\[
\alpha=(\alpha_1,\cdots,\alpha_N)\qquad\text{ and }~\quad |\alpha|=\sum_{j=1}^N\alpha_j.
\]
 \end{itemize}
\end{lemma}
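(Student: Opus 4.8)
The plan is to establish the five items in turn, observing that (iii)--(v) all rest on one mechanism: after the change of variables $z=x-y$, the contribution of the far region $\{|z|\ge 1\}$ to a Gagliardo-type energy is controlled by the L\'evy integrability \eqref{integrability} of $K_s$, while the near-diagonal singularity $K_s(z)\sim\kappa_{N,s}|z|^{-N}$ from \eqref{Asymptoti-o} is absorbed by any positive power $|z|^{\delta}$ supplied by H\"older continuity. For (i): bilinearity, symmetry and positivity of $b_{s,\Omega}$ (recall $K_s\ge 0$) are immediate, so \eqref{scalar1}--\eqref{scalar2} are genuine inner products and only completeness must be checked. Given a Cauchy sequence $(u_n)$ in $H^{\log,s}(\Omega)$, it is Cauchy in $L^2(\Omega)$, so $u_n\to u$ in $L^2(\Omega)$ with $u_{n_k}\to u$ a.e.\ along a subsequence; Fatou's lemma applied to $(x,y)\mapsto|u_{n_k}(x)-u_{n_k}(y)|^2K_s(x-y)$ gives $b_{s,\Omega}(u-u_m,u-u_m)\le\liminf_k b_{s,\Omega}(u_{n_k}-u_m,u_{n_k}-u_m)$, which is arbitrarily small for $m$ large, whence $u\in H^{\log,s}(\Omega)$ and $u_n\to u$ in norm; and $\cH_0^{\log,s}(\Omega)$, being by definition a closed subspace of the complete space $H^{\log,s}(\R^N)$, is complete as well. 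For (ii) I would only use the pointwise inequalities $\bigl||u(x)|-|u(y)|\bigr|\le|u(x)-u(y)|$ and $|u^{\pm}(x)-u^{\pm}(y)|\le|u(x)-u(y)|$ together with $\||u|\|_{L^2(\Omega)}=\|u\|_{L^2(\Omega)}$, $\|u^{\pm}\|_{L^2(\Omega)}\le\|u\|_{L^2(\Omega)}$, and read the two norm bounds off the definition of $\|\cdot\|_{H^{\log,s}(\Omega)}$.

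For (iii), take $u\in\cC_c^{0,\alpha}(\R^N)$ with $\supp u\subset B_R$; after $z=x-y$,
\[
[u]_{s,\log}=\int_{\R^N}\Bigl(\int_{\R^N}|u(x)-u(x-z)|^2\,dx\Bigr)K_s(z)\,dz ,
\]
and the inner integral is $\le 4\|u\|_{L^2(\R^N)}^2$ for every $z$ and, by H\"older continuity (the $x$-integrand being supported in $B_{R+1}$ when $|z|\le 1$), $\le C(u,R)\,|z|^{\delta}$ for $|z|\le 1$ with some $\delta>0$; hence it is $\le C\min\{1,|z|^{\delta}\}$ and the $z$-integral is finite by \eqref{integrability}, so (also $u\in L^2(\R^N)$) $u\in H^{\log,s}(\R^N)$. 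For (iv), from $(\phi u)(x)-(\phi u)(y)=\phi(x)(u(x)-u(y))+u(y)(\phi(x)-\phi(y))$ one gets
\[
|(\phi u)(x)-(\phi u)(y)|^2\le 2\|\phi\|_{L^\infty(\R^N)}^2|u(x)-u(y)|^2+2|u(y)|^2|\phi(x)-\phi(y)|^2 ;
\]
the first term integrates to $\le 2\|\phi\|_{L^\infty(\R^N)}^2[u]_{s,\log}$, while $|\phi(x)-\phi(y)|^2\le C_\phi\min\{1,|x-y|^{\delta}\}$ for a small $\delta>0$ (with $C_\phi$ depending on $\|\phi\|_{L^\infty(\R^N)}$ and the H\"older seminorm of $\phi$), so after $z=x-y$ the second term is $\le C_\phi\|u\|_{L^2(\R^N)}^2\int_{\R^N}\min\{1,|z|^{\delta}\}K_s(z)\,dz<\infty$ by \eqref{integrability}; together with $\|\phi u\|_{L^2(\R^N)}\le\|\phi\|_{L^\infty(\R^N)}\|u\|_{L^2(\R^N)}$ this gives $\|\phi u\|_{H^{\log,s}(\R^N)}\le C(N,s,\alpha,\phi)\|u\|_{H^{\log,s}(\R^N)}$, and the compactly supported variant is identical.

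For (v), fix $m\ge 1$ and $u\in H^m(\R^N)\subset H^1(\R^N)$. Since $\log(1+|\xi|^{2s})\le|\xi|^{2s}\le 1+|\xi|^2$, Lemma \ref{Energy-Fourier} gives, for every $v\in\cC_c^\infty(\R^N)$,
\[
b_s(v,v)=\int_{\R^N}\log(1+|\xi|^{2s})\,|\cF(v)(\xi)|^2\,d\xi\le\int_{\R^N}(1+|\xi|^2)\,|\cF(v)(\xi)|^2\,d\xi\le C\|v\|_{H^1(\R^N)}^2 .
\]
Choosing $u_n\in\cC_c^\infty(\R^N)$ with $u_n\to u$ in $H^m(\R^N)$, this estimate makes $(u_n)$ Cauchy in $H^{\log,s}(\R^N)$, hence convergent there by (i); its limit coincides with $u$ (the two limits already agree in $L^2(\R^N)$), so $u\in H^{\log,s}(\R^N)$, and letting $n\to\infty$ in $\|u_n\|_{H^{\log,s}(\R^N)}^2=\|u_n\|_{L^2(\R^N)}^2+b_s(u_n,u_n)\le C\|u_n\|_{H^1(\R^N)}^2\le C\|u_n\|_{H^m(\R^N)}^2$ yields $\|u\|_{H^{\log,s}(\R^N)}\le C\|u\|_{H^m(\R^N)}$.

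I expect no serious obstacle. The only points requiring a little care are passing the $\liminf$ inside the double integral in (i) (handled by Fatou and $K_s\ge 0$), and, in (iii)--(iv), checking that the far region $\{|z|\ge 1\}$ contributes a finite amount --- which is precisely the content of \eqref{integrability} (equivalently, the decay in \eqref{Asymptoti-o} for $s\in(0,1)$ and the exponential decay recorded in the subsequent Remark for $s=1$), valid uniformly in $s\in(0,1]$.
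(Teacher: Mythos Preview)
Your proof is correct and follows essentially the same route as the paper: completeness via Fatou in (i), the pointwise Lipschitz inequalities in (ii), and the kernel asymptotics/L\'evy integrability to handle the near-diagonal and far-field pieces in (iii)--(iv). The only notable difference is in (v): the paper bounds $\log(1+|\xi|^{2s})$ by $\beta^{-1}(1+|\xi|^{2s})^{\beta}$ and reads off the embedding directly from the Fourier-side norm, whereas you use the simpler estimate $\log(1+|\xi|^{2s})\le 1+|\xi|^{2}$ on smooth functions and then pass to the limit via density and the completeness from (i)---your version is slightly more careful about the fact that Lemma~\ref{Energy-Fourier} is only stated for $\cC^1_c$ functions.
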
	

\begin{proof} $(i)$. We provide the proof    only for the space $H^{\log,s}(\Omega)$. The proof for $\cH^{\log,s}(\Omega)$ follows analogously.

Let $\{u_n\}_n\subset H^{\log,s}(\Omega)$ be a Cauchy sequence. Then  $\{u_n\}_n$ is in particular a Cauchy sequence in $L^2(\Omega) $ and
hence there exists a  $u\in L^2(\Omega)$ such  that $u_n\to u$ as $n\to \infty$. Passing to a subsequence if necessary, we get that $u_n\to u$ a.e in $\R^N$ as $n\to \infty$ and by Fatou Lemma we have
\[
b_{s,\Omega}(u,u)\le \liminf_{n\to \infty}b_{s,\Omega}(u_n,u_n)\le \sup_{n\in \N}b_{s,\Omega}(u_n,u_n)<\infty,
\]
showing that $u\in H^{\log,s}(\Omega)$. Apply once more Fatou Lemma it follows that
\begin{align*}
\|u_n-u\|^2_{H^{\log,s}(\Omega)}&=\|u_n-u\|^2_{L^2(\Omega)}+ b_{s,\Omega}(u_n-u,u_n-u)\\
&\le  \liminf_{n\to \infty}\|u_n-u_m\|^2_{H^{\log,s}(\Omega)},
\end{align*}
 for $ n,m\in \N$. The claim follows since $\{u_n\}_n$ is a Cauchy sequence in $H^{\log,s}(\Omega)$.
 
$(ii)$. It straightforward to see by integrating  the inequality $$|u(x)|-|u(y)|\le |u(x)-u(y)|$$
that 
$$\text{$b_{s,\Omega}(|u|,|u|)\le b_{s,\Omega}(u,u,)$\quad and\quad  $\||u|\|_{H^{\log,s}(\Omega)}\le \|u\|_{H^{\log,s}(\Omega)}$.}$$ 
 Next, using also the inequality 
 $$2(u^+(x)-u^+(y))(u^{-}(x)-u^{-}(y))=- 2(u^{-}(x)u^{+}(y)+u^{-}(y)u^{+}(x))\le 0 \text{ for } \ x,y\in \R^N,$$
  it follows   by a simple computation that
 \[
 \begin{split}
 b_{s,\Omega}(u,u)&= b_{s,\Omega}(u^{+},u^{+})+b_{s,\Omega}(u^{-},u^{-}) -2b_{s,\Omega}(u^{+},u^{-})\\
 &\ge b_{s,\Omega}(u^{+},u^{+})+b_{s,\Omega}(u^{-},u^{-}),
  \end{split}
 \]
 proving clearly that   $(ii)$  holds.  
 
 $(iii)$. Let $u\in \cC_c^{0,\alpha}(\R^N)$ be such that $\supp \ u \subset B_r$, $r>0$. without loss of generality we may assume that $r=1$ such that we can directly apply the asymptotics in \eqref{Asymptoti-o}. It clear that $u\in \cC_c^{0,\alpha}(\R^N)$ implies $u\in L^2(\R^N)$. Moreover, for $x,y\in \R^N$,  fix $A:= \|u\|_{\cC^{0,\alpha}(\Omega)}$, we have
 \begin{equation}\label{min-iq}
 |u(x)-u(y)|\le 2\|u\|_{L^{\infty}}\qquad\text{and}\quad  |u(x)-u(y)|\le A|x-y|^{\alpha}
  \end{equation}
 Hence 
 \[
  |u(x)-u(y)|\le  C\min\{1,|x-y|^{\alpha}\}.
 \]
 Therefore,
 \begin{align*}
  b_{s}(u,u)&=\frac{A}{2}\int_{B_1}\int_{B_1}|x-y|^{2\alpha} K_s(x-y) \ dx dy   +A\int_{B_1}\int_{\R^N\setminus B_1}K_s(x-y)\ dydx\\
  &\le C_1\int_{B_1}\int_{B_1}|x-y|^{2\alpha-N}   dx dy +C_2\int_{B_1}\int_{\R^N\setminus B_1}|x-y|^{-N-2s}\ dy\\
  &\le C_3 |B_1(0)|\big(\int_{B_1}|z|^{2\alpha-N} \ dz +\int_{\R^N\setminus B_1}|z|^{-N-2s}\ dz\big)\\
  &= C_3 \omega_{N-1} |B_1(0)|\big( \int_{0}^1\rho^{2\alpha-1} \ d\rho +\int_{1}^{\infty}\rho^{-1-2s}\ d\rho\big) = C<\infty.
 \end{align*}
 where the constant $C:=C(N,s,\alpha)>0$. This ends the proof for $(iii)$ since  $u\in L^2(\R^N)$.  
 
 $(iv)$. Let $u\in H^{\log,s}(\R^N)$ and $\phi\in \cC^{0,\alpha}(\R^N)$.   Then, $\phi u\in L^{2}(\R^N)$ since $\phi$ is bounded.  Next,  we the constant $A$ as in \eqref{min-iq}, we have the following   inequality 
\begin{align*}
 |\phi(x)u(x)-\phi(y)u(y)|^2&\le 2\Big(|u(x)-u(y)|^2|\phi(x)|^2+|u(y)|^2|\phi(x)-\phi(y)|^2\Big)\\
 &\le 2\Big(|u(x)-u(y)|^2|\phi(x)|^2+ A^2|u(y)|^2\min\{1,|x-y|^{2\alpha}\}\Big).
 \end{align*} 
  It follows that
 \begin{align*}
 b_{s}(\phi u,\phi u)&\le \int_{\R^N}\int_{\R^N}|\phi(x)|^2|u(x)-u(y)|^2K_s(x,y)  dx dy \\
   &\hspace{2cm} +2A^2\int_{\R^N}\int_{ \R^N}|u(x)|^2\min\{1,|x-y|^{2\alpha} K_s(x-y) dydx\\
   &= I_1+I_2.
 \end{align*}
Since $\phi$ is bounded, $I_1$ is finite with $I_1\le  2\|\phi\|^2_{L^{\infty}(\R^N)} b_{s}(u,u)$. Moreover, using \eqref{Asymptoti-o}, the quantity $I_2$ can be bound as follows
\begin{align*}
I_2&\le 2A^2\int_{B_1}\int_{ B_1}|u(x)|^2|x-y|^{2\alpha-N} dydx +2A^2\int_{B_1}\int_{ \R^N\setminus B_1}\frac{|u(x)|^2}{|x-y|^{N+2s}} dydx\\
&\le C\|u\|_{L^2(\R^N)}\Big(\int_{ B_1}|z|^{2\alpha-N} dz+\int_{ \R^N\setminus B_1}|z|^{-2s-N} dz\Big)\\
&=C\omega_{N-1}\|u\|_{L^2(\R^N)}\Big(\int_{ 0}^1\rho^{2\alpha-1} d\rho+\int_{ 1}^{\infty}\rho^{-2s-1} d\rho\Big):= C_1 \|u\|_{L^2(\R^N)}.
\end{align*}
It follows combining $I_1$ and $I_2$ that there is a constant $C:=C(N,s,\alpha,\phi)$ such that
\[
 \|\phi u\|_{H^{\log,s}(\R^N)}\le C\|u\|_{H^{\log,s}(\R^N)}.\qquad 
 \]
This completes the proof for $(iv)$. 

$(v).~ $We use Lemma \ref{Energy-Fourier}. Put $m=s\beta>0$ for some  $\beta>0$. Using the trivial inequality (see \cite{JSW20}) ~ $\log a\le \frac{a^{\beta}}{\beta}$, $a\ge 1$,  we get
\begin{align*}
\|u\|^2_{H^{s,\log}(\R^N)}&=\int_{\R^N}\left(1+\log(1+|\xi|^{2s})\right)|\cF( u)(\xi)|^2\ d\xi\\
&\le \int_{\R^N}\left(1+\frac{(1+|\xi|^{2s})^\beta}{\beta}\right)|\cF( u)(\xi)|^2\ d\xi\\
&\le \int_{\R^N}\left(1+\frac{2^{\beta-1}(1+|\xi|^{2s\beta})}{\beta}\right)|\cF( u)(\xi)|^2\ d\xi\\
&\le C_{s,\beta}\int_{\R^N}\left(1+|\xi|^{2m}\right)|\cF( u)(\xi)|^2\ d\xi= C_{\beta,s}\|u\|_{H^{m}(\R^N)}.
\end{align*}

This provides the proof for $(v)$ and thus the proof of Lemma \ref{Properties}.
\end{proof}

\begin{lemma}\label{Density}
\begin{enumerate}[(i)]
    \item The space $\cC_c^{\infty}(\R^N)$ is dense in  $H^{\log,s}(\R^N)$, that is if $u\in H^{\log,s}(\R^N)$, then there exists a sequence $(u_n)_{n\in\N}\subset \cC_c^{\infty}(\R^N)$ such that 
\[
\|u_n-u\|_{H^{\log,s}(\R^N)}\to 0\quad \text{ as }\quad n\to \infty.
\]
\item  Then  the space $\cC^{\infty}_c(\Omega)$ is dense in $H^{\log,s}(\Omega)$ for any $s\in (0,1]$. 
 \end{enumerate}
\end{lemma}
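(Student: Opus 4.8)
I would prove density of $\cC_c^{\infty}(\R^N)$ in $H^{\log,s}(\R^N)$ in the two classical steps, truncation and then mollification. Fix $u\in H^{\log,s}(\R^N)$. For the truncation, choose $\eta\in\cC_c^{\infty}(\R^N)$ with $0\le\eta\le1$, $\eta\equiv1$ on $B_1$ and $\supp\eta\subset B_2$, and put $\eta_R(x):=\eta(x/R)$; then $u\eta_R\in H^{\log,s}(\R^N)$ by Lemma~\ref{Properties}$(iv)$, $u\eta_R\to u$ in $L^2(\R^N)$ by dominated convergence, and it remains to prove $[u-u\eta_R]_{s,\log}\to0$. With $v_R:=1-\eta_R$ and the pointwise inequality $|u(x)v_R(x)-u(y)v_R(y)|^2\le 2v_R(x)^2|u(x)-u(y)|^2+2|u(y)|^2|v_R(x)-v_R(y)|^2$, the first contribution vanishes in the limit by dominated convergence (since $0\le v_R\le1$, $v_R\to0$ pointwise, majorant $|u(x)-u(y)|^2K_s(x-y)$), while the second is at most $\|u\|_{L^2(\R^N)}^2\int_{\R^N}\min\{4,\,CR^{-2}|z|^2\}K_s(z)\,dz$, which tends to $0$ as $R\to\infty$ by a short computation splitting at $|z|=R$ and using the asymptotics \eqref{Asymptoti-o} (and \eqref{A-1} when $s=1$) together with the integrability of $K_s$ away from the origin.

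\noindent\textbf{Mollification and conclusion of (i).} Once $u$ is compactly supported, set $u_{\epsilon}:=\rho_{\epsilon}\ast u\in\cC_c^{\infty}(\R^N)$ for a standard mollifier $\rho_{\epsilon}$; then $u_{\epsilon}\to u$ in $L^2(\R^N)$, and for the seminorm I would observe that the computation proving Lemma~\ref{Energy-Fourier} uses only $w\in L^2(\R^N)$ and Tonelli's theorem (nonnegative integrand), so $[w]_{s,\log}=2\int_{\R^N}\log(1+|\xi|^{2s})|\cF(w)(\xi)|^2\,d\xi$ for every $w\in L^2(\R^N)$ (both sides allowed to equal $+\infty$). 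Applying this to $w=u_{\epsilon}-u$ and using $\cF(u_{\epsilon})=\widehat{\rho_{\epsilon}}\,\cF(u)$ with $|\widehat{\rho_{\epsilon}}|\le1$ and $\widehat{\rho_{\epsilon}}\to1$ pointwise, dominated convergence gives $[u_{\epsilon}-u]_{s,\log}\to0$; a diagonal argument then proves (i). (Alternatively this convergence can be had without Fourier analysis, from continuity of translations in $L^2$ applied to $\rho_{\epsilon}\ast u-u=\int\rho_{\epsilon}(z)(u(\cdot-z)-u)\,dz$ via Minkowski's integral inequality.)

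\noindent\textbf{Plan for (ii).} For general open $\Omega$ I would first reduce to $\Omega$ bounded: the cutoffs $u\eta_R$ converge to $u$ in $H^{\log,s}(\Omega)$ by the estimate above with all integrals restricted to $\Omega$ (using $\int_{\Omega}|\eta_R(x)-\eta_R(y)|^2K_s(x-y)\,dx\le\int_{\R^N}|\eta_R(x)-\eta_R(y)|^2K_s(x-y)\,dx$). For $\Omega$ bounded with Lipschitz boundary I would take a finite partition of unity subordinate to balls $U_j$ covering $\overline{\Omega}$ with each $\Omega\cap U_j$ star-shaped with respect to a ball; Lemma~\ref{Properties}$(iv)$ (whose proof adapts at once to $b_{s,\Omega}$) gives $\zeta_j u\in H^{\log,s}(\Omega\cap U_j)$, and on each star-shaped piece I would push the support strictly inside by an inward dilation $u_{\lambda}(x):=u(\lambda x)$, $\lambda\downarrow1$ (so $u_{\lambda}$ is supported in $\lambda^{-1}\overline{\Omega\cap U_j}\Subset\Omega\cap U_j$ and $u_{\lambda}\to u$ in $L^2$), then mollify to reach $\cC_c^{\infty}(\Omega)$ and sum the pieces. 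After a change of variables $b_{s,\Omega\cap U_j}(u_{\lambda},u_{\lambda})$ has kernel $K_s(\lambda^{-1}z)$ with $z$ bounded, and since $K_s$ is doubling there (by \eqref{Asymptoti-o}) dominated convergence gives $b_{s,\Omega\cap U_j}(u_{\lambda},u_{\lambda})\to b_{s,\Omega\cap U_j}(u,u)$, \emph{provided} the boundary-layer contribution is controlled.

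\noindent\textbf{Main obstacle.} The one genuinely delicate point, in (ii), is the control of this boundary layer. Since $K_s(z)\sim\kappa_{N,s}|z|^{-N}$ at the origin is \emph{not} integrable, the regional energy $b_{s,\Omega}$ truly feels $\partial\Omega$: cutting off (or dilating) near the boundary at scale $\delta$ produces, through the cross term $\int_{\Omega}|u(y)|^2\int_{\Omega}|\eta_{\delta}(x)-\eta_{\delta}(y)|^2K_s(x-y)\,dx\,dy$, a loss of size $|\log\delta|$ coming from $\int_{\delta<|z|<1}|z|^{-N}\,dz$. Absorbing it requires the logarithmic Hardy-type bound $|\log\delta|\int_{\{0<\dist(x,\partial\Omega)<\delta\}}|u|^2\,dx\to0$ for every $u\in H^{\log,s}(\Omega)$ — an estimate that holds precisely because $\logrels$ has \emph{order zero} (quantitatively, \eqref{integrability}, the integrability of $\min\{1,|z|^{\epsilon}\}K_s(z)$), and which itself follows from finiteness of the regional energy (the analogue for the logarithmic Laplacian being classical, cf.\ \cite{CW19}). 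I would carry out this Hardy-type estimate first, as I expect it to be the crux; the Lipschitz hypothesis enters only in the reduction to star-shaped pieces, and is not needed once one combines the argument with the truncation step after intersecting $\Omega$ with large balls.
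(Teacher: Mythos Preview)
Your argument for (i) is correct, but the paper takes a different route. Instead of the direct truncation-then-mollification scheme you propose, the paper works entirely on the Fourier side: using the characterization $\|u\|^2_{H^{\log,s}(\R^N)}=\int_{\R^N}(1+\log(1+|\xi|^{2s}))|\cF(u)|^2\,d\xi$, it shows that the ordinary Sobolev space $H^m(\R^N)$ embeds \emph{densely} into $H^{\log,s}(\R^N)$ by setting $\widehat{u_n}(\xi)=(1+|\xi|^{2m}/n)^{-1}\cF(u)(\xi)$, checking $u_n\in H^m(\R^N)$, and applying dominated convergence to get $\|u_n-u\|_{H^{\log,s}}\to0$. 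Density of $\cC_c^\infty(\R^N)$ then follows from the classical fact that $\cC_c^\infty(\R^N)$ is dense in $H^m(\R^N)$. This is shorter and avoids all kernel asymptotics, at the cost of invoking the $H^m$ theory as a black box; your approach is self-contained and makes explicit where the Lévy-integrability \eqref{integrability} and the tail decay of $K_s$ enter. Both are valid; note incidentally that your mollification step already uses the Fourier identity of Lemma~\ref{Energy-Fourier}, so the two arguments are closer than they first appear.

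For (ii) the paper gives no independent argument at all: it simply cites \cite[Theorem 3.1, Lemma 3.3, Proposition 3.4]{FJ22}, which treats general small-order kernels. Your plan---localize by a partition of unity into star-shaped pieces, push inward by dilation, then mollify, with the crux being a logarithmic Hardy-type control of $|\log\delta|\int_{\{0<\dist(\cdot,\partial\Omega)<\delta\}}|u|^2$---is the right outline and correctly isolates the genuine difficulty created by the borderline singularity $K_s(z)\sim|z|^{-N}$. One caveat: your reduction to star-shaped pieces presupposes Lipschitz regularity of $\partial\Omega$, whereas the lemma as stated imposes no hypothesis on $\Omega$ beyond openness; your closing remark that the Lipschitz assumption ``is not needed once one combines the argument with the truncation step'' conflates boundedness with boundary regularity and does not actually remove the hypothesis. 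If you intend to match the paper's generality you would need to check what assumptions \cite{FJ22} actually requires, or else supply a different approximation near irregular boundary points.
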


\begin{proof}
The proof of  Lemma \ref{Density} can be deduced from \cite{FJ22}, where general nonlocal operators of \textit{small order} is considered, combining Theorem 3.1, Lemma 3.3 and Proposition 3.4 therein. We provide a new idea of $(i)$  following the roadmap of  \cite[Theorem 7.14]{LL97}.
This uses the fact that the space $H^{\log,s}(\R^N)$ is larger than any Sobolev space $H^{m}(\R^N)$ of order  $m\ge 0$ with norm 
$$\|u\|_{H^{m}(\R^N)}:=\int_{\R^N}\left(1+|\xi|^{2m}\right)|\cF( u)(\xi)|^2\ d\xi.$$ 
In fact, it follows from Theorem \ref{properties1} $(iii)$ that the space $H^{\log,s}(\R^N)$  can be equivalently  defined in term of Fourier transform, 
$$H^{\log,s}(\R^N):=\big\{u\in L^2(\R^N): \int_{\R^N}\log(1+|\xi|^{2s})|\cF( u)(\xi)|^2\ d\xi<\infty\big\}$$
with the corresponding norm
\begin{equation}\label{Norm-Fourier}
\|u\|_{H^{\log,s}(\R^N)}=\Big(\int_{\R^N}\big(1+\log(1+|\xi|^{2s})\big)|\cF( u)(\xi)|^2\ d\xi\Big)^{\frac{1}{2}}.
\end{equation}
Therefore, if $u\in H^{m}(\R^N)$, it follows from Theorem \ref{properties1} $(v)$ that $u\in H^{\log,s}(\R^N)$.
Therefore the embedding  $H^{m}(\R^N)\subset H^{\log,s}(\R^N)$  is continuous.  To conclude the claim, it  suffices to show  that $H^{m}(\R^N)\subset H^{\log,s }(\R^N)$  densely.  For   $u\in H^{\log,s}(\R^N)$,   take 
$$\hat u_n(\xi)=\big(1+\frac{|\xi|^{2m}}{n}\big)^{-1}\cF( u)(\xi).$$ 
Then the sequence $\{u_n\}_{n\ge 1}\subset H^{m}(\R^N)$. Indeed, first observe  from Theorem \ref{properties1} $(iv)$  that 
\begin{equation}
    \log(1+|\xi|^{2s})\cF( u_n)= \big(1+\frac{|\xi|^{2m}}{n}\big)^{-1}\log(1+|\xi|^{2s})\cF( u).
\end{equation}
 Moreover, since $n\ge 1$, We have with $R_0>0$,
\begin{align*}
&\|u_n\|_{H^{m}(\R^N)}=\int_{\R^N}(1+|\xi|^{2m}|)\cF( u_n)(\xi)|^2 d\xi\\
&\qquad=\int_{\R^N}(1+|\xi|^{2m}|)\big(1+\frac{|\xi|^{2m}}{n}\big)^{-1}|\cF( u)(\xi)|^2 d\xi\\
&\qquad\le n\int_{B_{R_0}}|\cF( u)(\xi)|^2 d\xi+\frac{n}{\log(1+R_0^{2s})}\int_{B^c_{R_0}}\log(1+|\xi|^{2s})|\cF( u)(\xi)|^2 d\xi\\
&\qquad\le C_{n,R_0,s}\int_{\R^N}\big(1+\log(1+|\xi|^{2s})\big)|\cF( u)(\xi)|^2\ d\xi\le C\|u\|_{H^{\log,s}(\R^N)}<\infty.
\end{align*}
It thus follows by the dominated convergence theorem that
\[
\|u_n-u\|^2_{H^{\log,s}(\R^N)}=\int_{\R^N}\big(1+\log(1+|\xi|^{2s})\big)\Big|1-\big(1+\frac{|\xi|^{2m}}{n}\big)^{-1}\Big|^2|\cF( u)(\xi)|^2\ d\xi\to 0
\]
$\text{as }\ n\to \infty$. This completes the proof of Lemma \ref{Density}.
\end{proof}

We have the following Poincar\'e inequality and embedding results for the  fractional logarithmic Schr\"odinger operator $\logrels$ for $s\in (0,1]$.

\begin{prop}[\textbf{Poincar\'e inequality}]\label{Poincare} Let $\Omega\subset \R^N$ and $u\in \cH^{\log,s}_0(\Omega)$.
\begin{enumerate}[(i)]
\item If $|\Omega|<\infty$. Then, there exists a constant $C:=C(\Omega,N,s)>0$ such that
\begin{equation} \label{Poin}
\|u\|_{L^2(\Omega)}\le C\int_{\R^N}\int_{\R^N}(u(x)-u(y))^2K_s(x-y)\ dxdy.
\end{equation}
\item If $\Omega:=(-a,a)\times\R^{N-1})$, $a>0$, is bounded in one direction, that is, $|x_1|\le a$, then,
\begin{equation}
\|u\|_{L^2((-a,a)\times\R^{N-1})}\le C\int_{\R^N}\int_{\R^N}(u(x)-u(y))^2K_s(x-y)\ dxdy.
\end{equation}
\item Let $p\in [1,2]$. The embedding  $H^{\log,s}(\R^N)\hookrightarrow  L^p(\R^N)$ is locally compact. In particular, the embedding  $\cH_0^{\log,s}(\Omega)\hookrightarrow  L^p(\Omega)$ is compact for any open set $\Omega\subset \R^N$ with $|\Omega|<\infty$.  
\end{enumerate}
\end{prop}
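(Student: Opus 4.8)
The plan is to reduce all three statements to the Fourier identity of Lemma~\ref{Energy-Fourier}, namely
\[
\int_{\R^N}\int_{\R^N}|u(x)-u(y)|^2K_s(x-y)\,dx\,dy=2\int_{\R^N}\log(1+|\xi|^{2s})\,|\cF(u)(\xi)|^2\,d\xi,
\]
together with two elementary facts: (a) on $\{|\xi|\ge R\}$ one has $1\le\log(1+|\xi|^{2s})/\log(1+R^{2s})$, so the high-frequency part of $\|u\|_{L^2}^2$ is controlled by $\log(1+R^{2s})^{-1}$ times the energy; and (b) if $u$ vanishes outside a set $\Omega$ of finite measure, then Cauchy--Schwarz gives the crude pointwise bound $|\cF(u)(\xi)|^2\le(2\pi)^{-N}|\Omega|\,\|u\|_{L^2(\Omega)}^2$, which controls the low-frequency part over any bounded frequency ball. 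For (i): by Plancherel, $\|u\|_{L^2(\Omega)}^2=\int_{|\xi|<R}|\cF(u)|^2\,d\xi+\int_{|\xi|\ge R}|\cF(u)|^2\,d\xi$; bounding the first piece by (b) and the second by (a) yields $\|u\|_{L^2(\Omega)}^2\le(2\pi)^{-N}|\Omega|\,|B_1(0)|R^{N}\|u\|_{L^2(\Omega)}^2+\log(1+R^{2s})^{-1}\int_{\R^N}\log(1+|\xi|^{2s})|\cF(u)|^2\,d\xi$. Choosing $R>0$ small enough that $(2\pi)^{-N}|\Omega|\,|B_1(0)|R^{N}<1$ one absorbs the first term on the left and applies Lemma~\ref{Energy-Fourier}; optimizing over the admissible range of $R$ produces an explicit constant $C=C(\Omega,N,s)$ of the kind recorded in the proof of Lemma~\ref{s-embeding}. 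Since $\cC_c^\infty(\Omega)$ is dense in $\cH_0^{\log,s}(\Omega)$ and both sides of \eqref{Poin} are continuous for the $\cH_0^{\log,s}$-norm, the inequality passes to all $u\in\cH_0^{\log,s}(\Omega)$.

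For (ii) I would slice the slab $\Omega=(-a,a)\times\R^{N-1}$ in the first variable. For a.e.\ $x'\in\R^{N-1}$ the function $x_1\mapsto u(x_1,x')$ is supported in $(-a,a)$, a set of measure $2a$, so the one-dimensional instance of part (i) gives a constant $C_1=C_1(a,s)$ with
\[
\int_{\R}|u(x_1,x')|^2\,dx_1\le C_1\int_{\R}\log(1+|\xi_1|^{2s})\,|\cF_{x_1}u(\xi_1,x')|^2\,d\xi_1 ,
\]
where $\cF_{x_1}$ denotes the partial Fourier transform in $x_1$. Integrating in $x'$, exchanging the order of integration, invoking Plancherel in the $x'$ variables (which turns $\cF_{x_1}u$ into the full $\cF(u)$), and using $\log(1+|\xi_1|^{2s})\le\log(1+|\xi|^{2s})$ gives $\|u\|_{L^2(\Omega)}^2\le C_1\int_{\R^N}\log(1+|\xi|^{2s})|\cF(u)|^2\,d\xi$; one more appeal to Lemma~\ref{Energy-Fourier} together with the density argument from (i) finishes the proof.

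For (iii) the crux is the following claim: \emph{when $|\Omega|<\infty$, every bounded sequence in $\cH_0^{\log,s}(\Omega)$ has a subsequence converging in $L^2(\Omega)$.} Since $\cH_0^{\log,s}(\Omega)$ is a Hilbert space (Lemma~\ref{Properties}~(i)), a bounded sequence $(u_n)$ has a subsequence with $u_n\rightharpoonup u$ weakly; replacing $u_n$ by $u_n-u$ we may assume $u_n\rightharpoonup 0$, and set $M:=\sup_n\|u_n\|_{\cH_0^{\log,s}(\Omega)}^2<\infty$. Splitting $\|u_n\|_{L^2(\Omega)}^2$ at $|\xi|=R$ as in (i): by (a) the high-frequency part is $\le M/\log(1+R^{2s})$ uniformly in $n$; on $\{|\xi|\le R\}$ one has the uniform bound $|\cF(u_n)(\xi)|^2\le(2\pi)^{-N}|\Omega|M$ from (b), while weak convergence gives $\cF(u_n)(\xi)=(2\pi)^{-N/2}\langle u_n,e^{ix\cdot\xi}\rangle_{L^2(\Omega)}\to0$ for each fixed $\xi$ (note $e^{ix\cdot\xi}1_{\Omega}\in L^2(\Omega)$ since $|\Omega|<\infty$), so dominated convergence on the bounded set $\{|\xi|\le R\}$ forces $\int_{|\xi|\le R}|\cF(u_n)|^2\to0$. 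Given $\varepsilon>0$, fix $R$ with $M/\log(1+R^{2s})<\varepsilon/2$ and then take $n$ large: hence $\|u_n\|_{L^2(\Omega)}\to0$. This proves compactness of $\cH_0^{\log,s}(\Omega)\hookrightarrow L^2(\Omega)$, and the case $p\in[1,2)$ follows by Hölder's inequality since $|\Omega|<\infty$. For the local compactness of $H^{\log,s}(\R^N)\hookrightarrow L^p(\R^N)$, fix $\rho>0$ and a cut-off $\eta\in\cC_c^\infty(\R^N)$ with $\eta\equiv1$ on $B_\rho$; by Lemma~\ref{Properties}~(iv) the map $u\mapsto\eta u$ is bounded on $H^{\log,s}(\R^N)$ and carries a bounded sequence to a bounded sequence supported in the fixed finite-measure set $\supp\eta$, so the claim just proved applies to $(\eta u_n)$ and gives convergence in $L^p(B_\rho)$ along a subsequence; a diagonal argument over an exhaustion of $\R^N$ by balls yields convergence in $L^p_{\mathrm{loc}}(\R^N)$.

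The step I expect to be the main obstacle is part (iii): the embedding carries only a logarithmically small amount of compactness, so there is no Rellich-type gradient estimate to lean on and one must work directly with the Fourier multiplier; the delicate region is the low-frequency block $\{|\xi|\le R\}$, where $\log(1+|\xi|^{2s})$ degenerates. The key realization that unlocks it is that finiteness of $|\Omega|$ upgrades the merely \emph{pointwise} decay of $\cF(u_n)$ on bounded frequency balls — which weak $L^2$-convergence hands over for free — into genuine $L^2$-decay there, via dominated convergence. Two lesser technicalities are the cut-off reduction from $\cH_0^{\log,s}(\Omega)$ to $H^{\log,s}(\R^N)$, which rests on the multiplication estimate of Lemma~\ref{Properties}~(iv), and the routine density arguments needed to extend the Poincaré inequalities in (i)--(ii) from $\cC_c^\infty$ to the full energy spaces.
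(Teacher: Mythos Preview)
Your argument for (i) is essentially the paper's. For (ii) and (iii) your proofs are correct but take genuinely different routes. In (ii) the paper works in real space rather than Fourier space: it uses the asymptotic $K_s(z)\sim C_{N,s}|z|^{-N-2s}$ as $|z|\to\infty$ (Theorem~\ref{properties1}) to lower-bound the energy by the killing term $\int_\Omega|u(x)|^2\big(\int_{\R^N\setminus\Omega}K_s(x-y)\,dy\big)dx$, reduces the inner integral for the slab to a one-dimensional computation giving a positive multiple of $(a+x_1)^{-2s}+(a-x_1)^{-2s}$, and observes that this is bounded below on $(-a,a)$. Your Fourier slicing---apply the one-dimensional instance of (i) in $x_1$, integrate in $x'$, use Plancherel in $x'$ and the monotonicity $\log(1+|\xi_1|^{2s})\le\log(1+|\xi|^{2s})$---is more elementary and never touches the kernel asymptotics; the paper's method, conversely, extends to kernels that are not Fourier multipliers. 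In (iii) the paper approximates the identity by convolutions $T_{w_n}u=w_n\ast u$, where $w_n$ is the $L^1$-normalized restriction of $K_s$ to $\{|z|\ge\delta_n\}$: since convolution with an $L^1$ function is locally compact on $L^2$, and Jensen's inequality gives $\|u-T_{w_n}u\|_{L^2}^2\le\|K_s 1_{\{|z|\ge\delta_n\}}\|_{L^1}^{-1}\,b_s(u,u)\to0$, the embedding is a norm-limit of locally compact operators. Your direct Fourier-splitting argument---high frequencies uniformly small from the energy bound, low frequencies vanishing because $\cF(u_n)(\xi)\to0$ pointwise (weak $L^2$-convergence tested against $1_\Omega e^{-ix\cdot\xi}\in L^2$) with a uniform dominating bound on $\{|\xi|\le R\}$ coming from $|\Omega|<\infty$---is self-contained and transparently exploits the multiplier structure, whereas the paper's convolution scheme again carries over to kernels lacking a Fourier symbol.
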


\begin{proof} $(i)$
The  Poincarr\'e inequality  can be deduced from \cite{F22,FKV15,JW20, FJ22} but we include the proof here for completeness.
Set 
\[
\Xi(R,\Omega):= \frac{2}{\log(1+R^{2s})(1- (2\pi)^{-N}R^{N}|\Omega||B_1(0)|)}.
\]
  Since $u= 0$ in $\R^N\setminus \Omega$,  we first have by  H\"older  inequality that 
\[
|\cF( u)(\xi)|^2\le (2\pi)^{-N}|\Omega|\|u\|^2_{L^2(\Omega)}\qquad\text{ for every }\quad \xi\in \R^N.
\]
Next, by Plancherel theorem  and for any $R>0$,  we get
 \begin{align*}
&\|u\|^2_{L^2(\Omega)} = \int_{\R^N}|\cF( u)(\xi)|^2\ d\xi \\
&\quad= \int_{|\xi|<R}|\hat u(\xi)|^2\ d\xi+ \int_{|\xi|\ge R}\log(1+|\xi|^{2s})|\cF( u)(\xi)|^2\log(1+|\xi|^{2s})^{-1} d\xi\\
&\quad\le \frac{R^{N}|\Omega||B_1(0)| \|u\|^2_{L^2(\R^N)}}{(2\pi)^{N}}+\frac{1}{2\log(1+R^{2s})}\int_{\R^N}\log(1+|\xi|^{2s})|\cF( u)(\xi)|^2 d\xi.
\end{align*}
Therefore, choose $R<2\pi (|\Omega||B_1(0)|)^{-\frac{1}{N}}=  2\pi\big(\frac{N}{\omega_{N-1}|\Omega|}\big)^{\frac{1}{N}}$ we find using Lemma \ref{Energy-Fourier} that 
\begin{align*}
\|u\|^2_{L^2(\Omega)}&\le \Xi(\Omega,R)\int_{\R^N}\int_{\R^N}(u(x)-u(y))^2K_s(x-y)\ dxdy.
\end{align*}
The  proof of $(i)$ follows by minimizing the quantity  $\Xi(\Omega,R)$ with respect to $R$ in  the above inequality.  

$(ii).$ This can be deduced from \cite{JW20}. Indeed,  if $\Omega=(-a,a)\times\R^{N-1}$ for some $a>0$, we write $x=(x_1,x')\in(-a,a)\times\R^{N-1}$. By Fubini's theorem and the change of variable $\zeta:= \frac{x'-y'}{|x_1-y_1|}$, we can first observe that
\[
\begin{split}
    \kappa_{N,a,s}(x)&:= \int_{\R\setminus(-a,a)}\int_{\R^{N-1}}\frac{1}{|x-y|^{N+2s}}\ dy'dy_1\\
    &= \int_{\R\setminus(-a,a)}|x_1-y_1|^{N-2s}\int_{\R^{N-1}}\Big(1+\Big|\frac{x'-y'}{|x_1-y_1|}\Big|^2\Big)^{-\frac{N+2s}{2}}\ dy'dy_1\\
    &= \int_{\R\setminus(-a,a)}|x_1-y_1|^{-1-2s}\int_{\R^{N-1}}\big(1+|\zeta|^2\big)^{-\frac{N+2s}{2}}\ d\zeta dy_1\\
    &= \omega_{N-2}\Big(\int_{-\infty}^{-a}(x_1-y_1)^{-1-2s} dy_1+\int_{a}^{\infty}(y_1-x_1)^{-1-2s} dy_1\Big)\times\\
    &\qquad\qquad\qquad\qquad\int_{0}^{\infty}\frac{\rho^{N-2}}{(1+\rho^2)^{\frac{N+2s}{2}}}\ d\rho \\
    &=\frac{\pi^{\frac{N-1}{2}}\Gamma(\frac{1+2s}{2})}{2s\Gamma(\frac{N+2s}{2})}\big((a+x_1)^{-2s}+(a-x_1)^{-2s}\big):= M(N,s,a).
\end{split}
\]
Therefore, using \eqref{Asymptoti-o} in Theorem \ref{properties1}, we can find a constant $C':=C'(N,s,\Omega)>0$ such that 
\[
\begin{split}
    \int_{\R^N}\int_{\R^N}(u(x)-u(y))^2K_s(x-y)\ dxdy&\ge 2C'\int_{\Omega}|u(x)|^2\kappa_{N,a,s}(x)\ dx\\
    &\ge C'M(N,s,a)\|u\|^2_{L^2(\Omega)}=C\|u\|^2_{L^2(\Omega)}.
\end{split}
\]
$(iii).$ This is just an application of  \cite[Theorem 1.1 and 1.2]{JW20}, we give a detailed proof this operator. Let $w\in L^1(\R^N)$. Then the convolution operator
\begin{equation}\label{comp}
T_{w}: L^2(\R^N)\to L^2(\R^N), \qquad T_w = w*u 
\end{equation}
is continuous and  locally compact (see \cite[Lemma 2.1]{JW20}, see also \cite[Theorem 3.81]{GF20}).  Moreover, let $\delta>0$ be a small parameter. Then, from Theorem \ref{properties1}, we have 
\[
\begin{split}
\|K_s^{\delta}\|_{L^1(\R^N)}:= \int_{|z|\ge \delta}K_s(z)\ dz& = \int_{1\ge |z|\ge \delta}K_s(z)\  dz+ \int_{ |z|\ge 1}K_s(z)\  dz\\
&=C(N,s)(\log\delta+\frac{1}{2s})<\infty.
\end{split}
\]
Therefore, let $\{\delta_n\}_n$ be a sequence such that $\delta_n\to 0$ as $n\to \infty$. Set 
\[
w_n:=w_{\delta_n}= {K_s^{\delta_n}}\big/{\|K_s^{\delta_n}\|_{L^1(\R^N)}}
\]
Thus $\|w_n\|_{L^1(\R^N)}=1$. Then, by the evenness  of $w_n$ and the Jensen's inequality, we have  
\begin{equation}\label{pre-comp}
\begin{split}
   \|u-T_{w_n}\|_{L^2(\R^N)}&= \int_{\R^N}\big(u(x)-T_{w_n}u(x)\big)^2\ dx\\
   &=\int_{\R^N}\Big(\int_{\R^N}[u(x)-u(x+y)]w_n(y)\ dy\Big)^2\ dx\\
   &\le \int_{\R^N}\int_{\R^N}|u(x)-u(x+y)|^2w_n(y)\ dy\ dx\\
   &\le \|w_n\|^{-1}_{L^1(\R^N)}\int_{\R^N}\int_{\R^N}|u(x)-u(x+y)|^2K_s(y)\ dy\ dx\\
   &\le \|w_n\|^{-1}_{L^1(\R^N)}\|u\|_{H^{\log,s}(\R^N)}.
\end{split}
\end{equation}
Now, let $R_KT=1_{K}T$ be the multiplication of an operator $T$ with the  characteristic
function  $1_{K}$ of $K$,  where $K$ is a compact subset of $\R^N$. Then, by \eqref{pre-comp},
\[
\begin{split}
\|R_K-R_KT_{w_n}\|_{\cL(H^{\log,s}(\R^N),L^2(K))}&=\sup_{ \|u\|_{H^{\log,s}(\R^N)}\le 1}\|R_Ku-R_KT_{w_n}u\|_{L^{2}(K)}\\
&\le  \|w_n\|^{-1}_{L^1(\R^N)}\to 0\qquad \text{ as } n\to 0.
\end{split}
\]
Since $R_KT_{w_n}$ is  compact for every $n\in \N$, it follows that  
$
R_KT_w :H^{\log,s}(\R^N) \to L^2(K)
$
is  compact. Therefore,   the embedding  $H^{\log,s}(\R^N)\hookrightarrow  L^2(\R^N)$ is locally compact. Furthermore,  the embedding  $\cH_0^{\log,s}(\Omega)\hookrightarrow  L^2(\Omega)$ with $|\Omega|<\infty$ is also compact    thanks to the poincar\'e inequality in \eqref{Poin} which provides the continuous embedding. Since also $L^2(\Omega)\hookrightarrow  L^p(\Omega)$ for $p\in [1,2]$ and $|\Omega|<\infty$ is  continuous, it follows also  that $\cH_0^{\log,s}(\Omega)\hookrightarrow  L^p(\Omega)$ for $p\in [1,2]$ is compact. This completes the proof of the Proposition \ref{Poincare}.
 \end{proof}

As consequence of the Poincar\'e inequality, we have for  bounded set $\Omega\subset \R^N$  with continuous boundary that  the space $\cH_0^{\log,s}(\Omega)$ can be identified by
 \[
\cH_0^{\log,s}(\Omega)=\Big \{ u\in H^{\log,s}(\R^N): \quad u\equiv 0 \text{ \ on \ }\ \R^N\setminus \Omega\Big\}
\]
 and it is a Hilbert  space endowed with the scalar product
$
	(v,w) \mapsto b_{s}(v,w) 
$
and the corresponding norm ~$\|u\|_{\H_0^{\log,s}(\Omega)}=\sqrt{b_{s}(u,u)}$.
 
\begin{prop}[{\bf Poincar\'e Wirtinger}] \label{P-W} Assume that $\Omega\subset \R^N$ is an open bounded set. Then  there exists a constant $C>0$, depending only on $\Omega$ and $s$, such that for every function $u\in H^{\log,s}(\Omega)$ 
    \begin{equation}\label{Poincare-Wirtinger}
    \int_{\Omega}\big|u-\frac{1}{|\Omega|}\int_{\Omega}u \big|^2\ dx\le C\int_{\Omega}\int_{\Omega}(u(x)-u(y))^2K_{s}(x-y)\ dxdy   
    \end{equation}
\end{prop}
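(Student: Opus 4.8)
The plan is to derive the Poincaré–Wirtinger inequality \eqref{Poincare-Wirtinger} from the compactness of the embedding $\cH^{\log,s}(\Omega)\hookrightarrow L^2(\Omega)$ (Proposition \ref{Poincare} (iii)) by the standard contradiction-and-compactness argument. First I would argue directly with the full space $H^{\log,s}(\Omega)$ rather than passing to a quotient, since Proposition \ref{Poincare} (iii) gives compactness of $H^{\log,s}(\Omega)\hookrightarrow L^2(\Omega)$ over bounded $\Omega$ (the statement there is phrased for $\cH_0^{\log,s}$, but the local compactness of $H^{\log,s}(\R^N)\hookrightarrow L^2_{loc}$ shown in the proof restricts to $\Omega$ bounded). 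Suppose \eqref{Poincare-Wirtinger} fails; then there is a sequence $u_n\in H^{\log,s}(\Omega)$ with
\[
\int_{\Omega}\Big|u_n-\tfrac{1}{|\Omega|}\int_{\Omega}u_n\Big|^2\,dx = 1
\qquad\text{and}\qquad
b_{s,\Omega}(u_n,u_n)\to 0.
\]
Replacing $u_n$ by $v_n:=u_n-\frac{1}{|\Omega|}\int_{\Omega}u_n$ (which leaves $b_{s,\Omega}$ unchanged since constants contribute nothing to the Gagliardo form), we may assume $\int_\Omega v_n=0$, $\|v_n\|_{L^2(\Omega)}=1$, and $b_{s,\Omega}(v_n,v_n)\to 0$.

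Next I would note that $\{v_n\}$ is bounded in $H^{\log,s}(\Omega)$: indeed $\|v_n\|_{H^{\log,s}(\Omega)}^2=\|v_n\|_{L^2(\Omega)}^2+b_{s,\Omega}(v_n,v_n)$ is bounded. By Proposition \ref{Poincare} (iii), up to a subsequence $v_n\to v$ strongly in $L^2(\Omega)$, so $\|v\|_{L^2(\Omega)}=1$ and $\int_\Omega v=0$. Then I would pass to the limit in the bilinear form: by Fatou's lemma applied to the nonnegative integrand $|v_n(x)-v_n(y)|^2K_s(x-y)$ along a further subsequence converging a.e.,
\[
b_{s,\Omega}(v,v)\le \liminf_{n\to\infty} b_{s,\Omega}(v_n,v_n)=0,
\]
hence $[v]_{s,\log,\Omega}=0$. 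Since $K_s>0$ a.e. on $\R^N$ (clear from \eqref{KerenelExp} as $p_s(z,t)>0$), this forces $v(x)=v(y)$ for a.e.\ $(x,y)\in\Omega\times\Omega$, and because $\Omega$ is open (and one may reduce to a connected component, or simply use that $v$ is a.e.\ equal to a constant on each component and the normalization $\|v\|_{L^2}=1$, $\int v=0$ handles it), $v$ equals a constant $c$. But $\int_\Omega v=0$ gives $c=0$, contradicting $\|v\|_{L^2(\Omega)}=1$. This contradiction proves \eqref{Poincare-Wirtinger}.

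**The main obstacle** I anticipate is justifying the lower semicontinuity step $b_{s,\Omega}(v,v)\le\liminf b_{s,\Omega}(v_n,v_n)$ cleanly: one needs $L^2(\Omega)$-convergence (plus a.e.\ convergence of a subsequence) to apply Fatou to the double integral, and one must be careful that the weak regularity available is enough—this is exactly why I route through the strong $L^2$ convergence from the compact embedding rather than mere weak convergence. A secondary point is the conclusion "$[v]_{s,\log,\Omega}=0$ with $K_s>0$ implies $v$ constant": this uses positivity of $K_s$ on all of $\R^N$ and connectedness of $\Omega$; if $\Omega$ is disconnected one applies the argument on each component and absorbs the finitely-many (or, if needed, countably-many) constants, but for the purposes of this proposition one may assume $\Omega$ connected or simply invoke the standard fact that a function with vanishing Gagliardo seminorm on an open set is locally constant. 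Everything else—boundedness of the sequence, the reduction to mean-zero functions, extracting the contradiction from the normalization—is routine.
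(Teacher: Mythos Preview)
Your proposal is correct and follows essentially the same contradiction-and-compactness argument as the paper's proof: normalize to mean zero and unit $L^2$ norm, use the compact embedding from Proposition~\ref{Poincare}(iii) to extract a strongly convergent subsequence, pass to the limit in the seminorm via Fatou, and derive a contradiction from the fact that a function with vanishing Gagliardo form must be constant. The only cosmetic differences are that the paper cites \cite[Proposition~2.10]{FJ22} for the constancy step while you argue it directly from positivity of $K_s$, and that you make explicit (correctly) the small point that Proposition~\ref{Poincare}(iii) is stated for $\cH_0^{\log,s}(\Omega)$ but its proof gives the needed local compactness for $H^{\log,s}$.
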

\begin{proof}
   We argue by contradiction. Assume that the inequality in \eqref{Poincare-Wirtinger} does not hold. Then, there exists a sequence of functions $\{u_n\}_{n\ge 1}\subset H^{\log,s}(\Omega)$ such that 
\[
\frac{1}{|\Omega|}\int_{\Omega} u_n\ dx = 0,\quad\quad \int_{\Omega}|u_n|^2 \ dx =1
\]
and 
\begin{equation}\label{contrad}
\int_{\Omega}\int_{\Omega}(u_n(x)-u_n(y))^2K_{s}(x-y)\ dxdy<\frac{1}{n}.
\end{equation}
Then $\{u_n\}_{n\ge 1}$ is uniformly bounded in $ H^{\log,s}(\Omega)$. Passing to a subsequence, we have 
\[
u_n\to u \quad \text{ in }\quad L^2(\Omega)
\]
thanks to the embedding in $(iii) $ of proposition \ref{Poincare}. It thus follows that 
\begin{equation}\label{Normalized}
\frac{1}{|\Omega|}\int_{\Omega} u\ dx = 0,\quad \text{ and } \quad \int_{\Omega}|u|^2 \ dx =1.
\end{equation}
On the other hand, passing to the limit in \eqref{contrad} implies that
\[
\int_{\Omega}\int_{\Omega}(u(x)-u(y))^2K_{s}(x-y)\ dxdy=0.
\]
Therefore, $u$ is constant  in $\Omega$ by \cite[Propistion 2.10]{FJ22}, which contradicts \eqref{Normalized}. This completes the proof of Proposition \ref{P-W}.
\end{proof}

\begin{lemma}\label{s-embeding}
Let $0<s< s_0$ and $u:\R^N\to \R$ be a measurable function. Then  there exists a positive constant $C:=C(N,s_0)$ such that  
\[
\|u\|_{H^{\log, s}(\R^N)}\le C\|u\|_{H^{\log, s_0}(\R^N)}.
\]
In particular, ~$H^{\log, s_0}(\R^N)\subset H^{\log, s}(\R^N)$.
\end{lemma}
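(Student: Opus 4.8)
The plan is to move everything to the Fourier side, where the statement collapses to an elementary pointwise comparison of the two multipliers $\log(1+|\xi|^{2s})$ and $\log(1+|\xi|^{2s_0})$. The inequality is trivial unless $u\in H^{\log,s_0}(\R^N)$, so I may assume this from the outset. For a test function $\varphi\in\cC^\infty_c(\R^N)$, Plancherel together with Lemma~\ref{Energy-Fourier} gives, for any $s\in(0,1]$,
\[
\|\varphi\|_{H^{\log,s}(\R^N)}^2=\int_{\R^N}\bigl(1+\log(1+|\xi|^{2s})\bigr)\,|\cF(\varphi)(\xi)|^2\,d\xi .
\]
Thus the heart of the matter is the claim that there is a constant $C_0$ — in fact $C_0=1+\log 2$, independent of $N$ and of $s<s_0$ — with
\[
1+\log(1+t^{2s})\le C_0\bigl(1+\log(1+t^{2s_0})\bigr)\qquad\text{for all } t\ge 0,\ 0<s<s_0 .
\]

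I would verify this by splitting into $t\le 1$ and $t>1$. For $t\le 1$ we have $t^{2s}\le 1$, so the left-hand side is at most $1+\log 2$, while the right-hand side is at least $C_0$; choosing $C_0=1+\log 2$ handles this range. For $t>1$ we have $t^{2s}<t^{2s_0}$ since $s<s_0$, hence $\log(1+t^{2s})\le\log(1+t^{2s_0})$ and the left-hand side is at most $1+\log(1+t^{2s_0})\le C_0\bigl(1+\log(1+t^{2s_0})\bigr)$. Feeding this back into the Fourier identity above yields, for every $\varphi\in\cC^\infty_c(\R^N)$,
\[
\|\varphi\|_{H^{\log,s}(\R^N)}\le\sqrt{C_0}\,\|\varphi\|_{H^{\log,s_0}(\R^N)} .
\]

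It remains to upgrade this from smooth compactly supported functions to arbitrary $u$, and this density/completeness step is the only place requiring care. Using $H^{\log,s_0}(\R^N)=\cH_0^{\log,s_0}(\R^N)$ (noted when these spaces were introduced), choose $\varphi_n\in\cC^\infty_c(\R^N)$ with $\varphi_n\to u$ in $H^{\log,s_0}(\R^N)$. Applying the last inequality to the differences $\varphi_n-\varphi_m$ shows $\{\varphi_n\}$ is Cauchy in $H^{\log,s}(\R^N)$; since $\varphi_n\in H^{\log,s}(\R^N)$ by Lemma~\ref{Properties}(iii) and $H^{\log,s}(\R^N)$ is complete by Lemma~\ref{Properties}(i), the sequence converges in $H^{\log,s}(\R^N)$ to some $v$. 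Both norms dominate the $L^2(\R^N)$-norm, so $\varphi_n\to v$ and $\varphi_n\to u$ in $L^2(\R^N)$, whence $v=u$ and in particular $u\in H^{\log,s}(\R^N)$. Letting $n\to\infty$ in $\|\varphi_n\|_{H^{\log,s}}\le\sqrt{C_0}\,\|\varphi_n\|_{H^{\log,s_0}}$ gives the asserted bound with $C:=\sqrt{1+\log 2}$, a constant that trivially fits the stated dependence $C=C(N,s_0)$, and the inclusion $H^{\log,s_0}(\R^N)\subset H^{\log,s}(\R^N)$ follows at once. A direct real-space proof comparing the kernels $K_s$ and $K_{s_0}$ via the asymptotics \eqref{Asymptoti-o} is also possible, but it produces an $s$-dependent constant and is messier, so I would not pursue it.
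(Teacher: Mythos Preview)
Your argument is correct and follows the same Fourier-side strategy as the paper --- express both norms via the multipliers $1+\log(1+|\xi|^{2s})$ and compare them pointwise by splitting at $|\xi|=1$ --- but your treatment of the low-frequency region is cleaner. You simply absorb the contribution from $|\xi|\le 1$ into the constant $1$ already sitting in the right-hand multiplier, which yields the explicit universal constant $C_0=1+\log 2$, independent of $N$, $s$, and $s_0$. The paper instead bounds $\int_{|\xi|<1}\log(1+|\xi|^{2s})|\cF(u)(\xi)|^2\,d\xi\le\log 2\cdot\|u\|_{L^2(B_1)}^2$ and then invokes a Poincar\'e inequality to control this by the $s_0$-seminorm, producing a constant $C(N,s_0)$. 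That detour is unnecessary --- the $L^2$-part is already contained in the $H^{\log,s_0}$-norm --- and, as written, the Poincar\'e inequality of Proposition~\ref{Poincare} applies only to functions vanishing outside a set of finite measure, which is not assumed here. Your final density/completeness step is harmless but also dispensable: the paper itself uses the Fourier identity of Lemma~\ref{Energy-Fourier} for general $u\in H^{\log,s}(\R^N)$ in its own proof of this lemma, so once the pointwise multiplier bound holds the conclusion is immediate.
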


\begin{proof}
By the Poincar\'e inequality, there exists  $M:=M(N,s_0)>0$ such that 
\[
\|u\|^2_{L^2(B_1(0))}\le M\int_{\R^N}\int_{\R^N}(u(x)-u(y))^2K_{s_0}(x-y)\ dxdy 
\]
 for all  $ u\in H^{\log, s_0}(\R^N)$, where the constant $M$ is given by
\[
M=\min_{R\in \big(0,\frac{2\pi N}{\omega_{N-1}|\Omega|}\big)}\Big\{\frac{2}{\log(1+R^{2s_0})\Big(1- (2\pi)^{-N}R^{N}|B_1(0)|^2\Big)}\Big\}.
\]
Moreover,  by Plancherel theorem,  we get with $C_1:=( M\log(2)+1)$,
\begin{align*}
&\int_{\R^N}\int_{\R^N}|u(x)-u(y)|^2K_s(x-y)\ dxdy =\int_{\R^N}\log(1+|\xi|^{2s})|\cF( u)(\xi)|^2\ d\xi\\
&\hspace{2cm}=\Big( \int_{|\xi|<1}+\int_{|\xi|\ge1}\Big)\log(1+|\xi|^{2s})|\cF( u)(\xi)|^2\ d\xi\\
&\hspace{2cm}\le \log(2)\|u\|^2_{L^2(B_1)}+\int_{\R^N}\log(1+|\xi|^{2s_0})|\cF( u)(\xi)|^2\ d\xi\\
&\hspace{2cm}\le (\log(2)M+1)\int_{\R^N}\log(1+|\xi|^{2s_0})|\cF( u)(\xi)|^2\ d\xi\\
&\hspace{2cm}=C_1\int_{\R^N}\int_{\R^N}|u(x)-u(y)|^2K_{s_0}(x-y)\ dxdy.
\end{align*}
Therefore, it  follows that 
\begin{align*}
\|u\|^2_{H^{\log, s}(\R^N)}&= \|u\|^2_{L^2(\R^N)}+\int_{\R^N}\int_{\R^N}|u(x)-u(y)|^2K_{s}(x-y)\ dxdy\\
&\le \|u\|^2_{L^2(\R^N)}+C_1\int_{\R^N}\int_{\R^N}|u(x)-u(y)|^2K_{s_0}(x-y)\ dxdy\\
&= C\|u\|^2_{H^{\log,s_0}(\R^N)},
\end{align*}
where $C:=\max\{1,C_1\}$. This completes the proof of Lemma \ref{s-embeding}.
\end{proof}

\begin{thm}[{\bf Logarithmic inequality}] \label{logarithmic-ineq}
Let  $u\in H^{\log,s}(\R^N)$ be any function and let $s>0$ be any real number. Then,
\begin{align*}
\int_{\R^N}|u|^2\log|u|^2\ dx\le B_{N}\|u\|^2_{L^2(\R^N)}+ \|u\|^2_{L^2(\R^N)} \log\big(\|u\|^2_{L^2(\R^N)} \big)+\frac{N}{2s}b_{s}(u,u),
\end{align*}
where the constant  $B_{N}$ is given
\begin{equation}\label{c1}
 B_{N}:= \log\Big(\frac{\Gamma(N)}{\Gamma\Big(\frac{N}{2}\Big)}\Big)-\frac{N}{2}\Big(\log(4\pi)+2\psi\Big(\frac{N}{2}\Big)\Big).
\end{equation}
$\text{with }\ \psi(t)=\log(\Gamma(t))'$,  the digamma function. 

For general open set $\Omega\subset \R^N$, we have
\begin{equation}\label{Log-Ineq-O}
\begin{split}
\int_{\Omega}|u(x)|^2\log\Big(\frac{|u(x)|^2}{~\|u\|^2_{L^2(\Omega)}}\Big)\ dx
\le \frac{N}{2s}\big(B_{N}\|u\|^2_{L^2(\Omega)} +b_{\Omega,s}(u,u)\big).
\end{split}
\end{equation}
\end{thm}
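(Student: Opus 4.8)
The plan is to reduce by homogeneity to normalized functions and then to deduce the inequality from the sharp fractional Sobolev inequality, differentiated in its order at $0$, via the Fourier representation of $b_s$. First, writing $u=\|u\|_{L^2(\R^N)}v$ with $\|v\|_{L^2(\R^N)}=1$ gives $\int_{\R^N}|u|^2\log|u|^2\,dx=\|u\|^2_{L^2(\R^N)}\log\|u\|^2_{L^2(\R^N)}+\|u\|^2_{L^2(\R^N)}\int_{\R^N}|v|^2\log|v|^2\,dx$ and $b_s(u,u)=\|u\|^2_{L^2(\R^N)}b_s(v,v)$, so the statement is equivalent to $\int_{\R^N}|v|^2\log|v|^2\,dx\le B_N+\tfrac{N}{2s}b_s(v,v)$ for $\|v\|_{L^2(\R^N)}=1$. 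I would prove this first for $v\in C^\infty_c(\R^N)$, where $\cF(v)$ is Schwartz and every integral below converges absolutely, and then pass to general $v\in H^{\log,s}(\R^N)$ by the density Lemma \ref{Density}, using Fatou's lemma on $|v_n|^2(\log|v_n|^2)_+$ for the positive part (the negative part being either finite or $-\infty$, the latter case being trivial).

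Next, by Lemma \ref{Energy-Fourier}, $b_s(v,v)=\int_{\R^N}\log(1+|\xi|^{2s})|\cF(v)(\xi)|^2\,d\xi$, and since $0<|\xi|^{2s}<1+|\xi|^{2s}$ one has the pointwise bound $\log(1+|\xi|^{2s})\ge\log|\xi|^{2s}=2s\log|\xi|$; hence, as an inequality of extended reals,
\[
\frac{N}{2s}\,b_s(v,v)\ \ge\ \frac{N}{2}\int_{\R^N}2\log|\xi|\,|\cF(v)(\xi)|^2\,d\xi,
\]
the right-hand side being $\tfrac N2$ times the quadratic form of the logarithmic Laplacian $\loglap$ (symbol $2\log|\cdot|$; cf. \cite{CW19}). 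It therefore remains to establish the logarithmic Sobolev inequality for $\loglap$, namely $\int_{\R^N}|v|^2\log|v|^2\,dx\le B_N+\tfrac N2\int_{\R^N}2\log|\xi|\,|\cF(v)|^2\,d\xi$ when $\|v\|_{L^2(\R^N)}=1$; and when the integral on the right is $-\infty$ the previous display already forces $\int_{\R^N}|v|^2\log|v|^2\,dx=-\infty$, so nothing is to be proved.

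For that inequality I would differentiate at order $0$ the sharp fractional Sobolev (Hardy--Littlewood--Sobolev) inequality: for $0<\sigma<N/2$, $\|v\|^2_{L^{2N/(N-2\sigma)}(\R^N)}\le C_{N,\sigma}\int_{\R^N}|\xi|^{2\sigma}|\cF(v)(\xi)|^2\,d\xi$ with Lieb's explicit optimal constant $C_{N,\sigma}$ (see \cite{LL97}), and $C_{N,0}=1$. For $v\in C^\infty_c(\R^N)$ with $\|v\|_{L^2}=1$ both sides are right-differentiable at $\sigma=0$ and equal $1$ there, with $\tfrac{d}{d\sigma}\big|_{0^+}\|v\|^2_{L^{2N/(N-2\sigma)}}=\tfrac{2}{N}\int_{\R^N}|v|^2\log|v|^2\,dx$ and $\tfrac{d}{d\sigma}\big|_{0^+}\big(C_{N,\sigma}\int|\xi|^{2\sigma}|\cF(v)|^2\big)=\tfrac{d}{d\sigma}\big|_0 C_{N,\sigma}+\int_{\R^N}2\log|\xi|\,|\cF(v)|^2\,d\xi$; dividing the Sobolev inequality (written as $\mathrm{LHS}(\sigma)-1\le\mathrm{RHS}(\sigma)-1$) by $\sigma>0$ and letting $\sigma\downarrow0$ gives
\[
\int_{\R^N}|v|^2\log|v|^2\,dx\ \le\ \frac N2\,\frac{d}{d\sigma}\Big|_0 C_{N,\sigma}\ +\ \frac N2\int_{\R^N}2\log|\xi|\,|\cF(v)(\xi)|^2\,d\xi .
\]
Plugging in Lieb's formula and computing $\tfrac{d}{d\sigma}\big|_0\log C_{N,\sigma}$ — a combination of $\log2$, $\log\pi$, $\psi(N/2)$ and $\tfrac2N\log(\Gamma(N/2)/\Gamma(N))$ — one checks that $\tfrac N2\,\tfrac{d}{d\sigma}\big|_0 C_{N,\sigma}$ is exactly the constant $B_N$ of \eqref{c1}; combined with the preceding paragraph this proves the $\R^N$ inequality.

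For a general open set $\Omega$ the same scheme would be run with $\R^N$ replaced by $\Omega$: since all increments $x-y$ with $x,y\in\Omega$ lie in a bounded set, the asymptotics \eqref{Asymptoti-o} give $K_s(x-y)\ge c\,|x-y|^{-N}$ there, which bounds $b_{\Omega,s}(u,u)$ from below by the regional logarithmic energy on $\Omega$ up to a lower-order $\|u\|^2_{L^2(\Omega)}$ term, and one then invokes the logarithmic Sobolev inequality for the logarithmic Laplacian on $\Omega$ (normalized by $\|u\|_{L^2(\Omega)}$) to reach \eqref{Log-Ineq-O}. I expect the main difficulty to be the third step — matching the additive constant with the \emph{exact} expression $B_N$ of \eqref{c1}, which needs both the precise optimal fractional-Sobolev constant and a rigorous justification of the endpoint differentiation at $\sigma=0$ (hence the passage through $C^\infty_c$) — and, to a lesser extent, the constant bookkeeping in the $\Omega$ case, where $b_{\Omega,s}$ is the intrinsic regional form and the lower bound for $K_s$ degenerates near $\partial\Omega$.
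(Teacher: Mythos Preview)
For the $\R^N$ inequality your architecture coincides with the paper's: normalize to $\|v\|_{L^2}=1$, use Lemma~\ref{Energy-Fourier} and the pointwise bound $\log(1+|\xi|^{2s})\ge 2s\log|\xi|$ to reduce to a logarithmic Sobolev inequality for the symbol $\log|\xi|$, then undo the normalization. The only genuine difference is that the paper does not re-derive that log-Sobolev inequality; it simply quotes Beckner's sharp result \cite[Theorem~3]{B95} (Pitt's inequality in logarithmic form), which already gives the exact constant $B_N$ of \eqref{c1}. Your endpoint-differentiation of the Lieb/HLS constant is a correct and well-known alternative derivation of the same inequality, but it is strictly more work and makes the constant bookkeeping delicate --- precisely the difficulty you flagged --- whereas citing Beckner removes it entirely. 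So on $\R^N$ your proposal is correct and essentially the paper's proof with an unnecessary detour.

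Your sketch for the $\Omega$ inequality, however, does not go through as written. You assume that all increments $x-y$ with $x,y\in\Omega$ lie in a bounded set in order to invoke the asymptotics \eqref{Asymptoti-o}; this forces $\Omega$ bounded, while the statement is for a general open set. More seriously, you then appeal to a ``logarithmic Sobolev inequality for the logarithmic Laplacian on $\Omega$'' with the \emph{sharp} constant $B_N$; no such regional inequality with that constant is available in the paper (the regional kernel $|x-y|^{-N}$ is not translation-invariant on $\Omega$ and you lose the Fourier/Beckner machinery), and the lower bound $K_s(x-y)\ge c|x-y|^{-N}$ carries an $\Omega$-dependent constant $c<1$, so even if a regional log-Sobolev held you would not recover \eqref{Log-Ineq-O} with the stated constants. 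The paper itself does not supply a proof of \eqref{Log-Ineq-O} beyond the $\R^N$ case (it only remarks afterward that the $\mathbb{S}^N$ instance is Beckner's sphere inequality \cite{B97,FKT20}), so there is nothing to compare against, but you should be aware that your proposed reduction does not establish it.
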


\begin{proof}
 Let us first recall the following logarithmic inequality due to Beckner \cite[Theorem 3]{B95}.  With  $L^2$-normalization $\|u\|_{L^2(\R^N)}=1$, it holds that
\begin{equation}\label{Beckner}
\int_{\R^N}|u|^2\log|u|^2\ dx\le {N}\int_{\R^N}\log|\xi||\cF(u)(\xi)|^2\ d\xi+2B_N ,
\end{equation}
where the constant $B_N$ is given by \eqref{c1}.
Up to conformal automorphism, extremal functions for \eqref{Beckner} are functions of the form $$ u(x)=A_N(1+|x|^2)^{-\frac{N}{2}}\qquad \text{  with \ }\ A_N= \frac{2^N\Gamma(\frac{N+1}{2})}{\sqrt{\pi}\Gamma(\frac{N}{2})}.$$
From the inequality $\log|\xi|=\frac{1}{2s}\log|\xi|^{2s}\le \frac{1}{2s}\log(1+|\xi|^{2s})$ and assuming the $L^2$-normalization $\|u\|_{L^2(\R^N)}=1$ , if we multiply   \eqref{Beckner}   by the factor $2s$, we get 
\begin{equation}\label{Beckner-2}
\int_{\R^N}|u|^2\log|u|^2\ dx\le \frac{N}{2s}b_s(u,u)+2B_N. 
\end{equation}
Next, set $u=\frac{v}{\|v\|^2_{L^2}}$, the complete inequality is given by
\begin{align*}
\int_{\R^N}|v|^2\log|v|^2\ dx\le \frac{N}{2s}b_s(v,v)+2B_N\|v\|_{L^2(\R^N)}+\|v\|^2_{L^2(\R^N)} \log\big(\|v\|^2_{L^2(\R^N)}).
\end{align*}   
This completes the proof of Theorem \ref{logarithmic-ineq}. 
\end{proof}

 An example of inequality in \eqref{Log-Ineq-O} is the Beckner's logarithmic Sobolev inequality on $\mathbb{S}^N$ \cite{B97, FKT20} with $\Omega$ being the unit sphere in $\R^{N+1}$.  Up to conformal automorphism, extremal functions for \eqref{Log-Ineq-O} with $\Omega\equiv \mathbb{S}^N$ are functions of the form
\[
u(\omega)= c\left(\frac{\sqrt{1-|\xi|^2}}{1-\xi\cdot\omega}\right)^{N/2}
\]
for some $\xi\in\R^{N+1}$ with $|\xi|<1$ and some $c\in\R$.

\section{Dirichlet problems}\label{Sect4}
We have so far introduced sufficient tools to characterized the operator $(I+(-\Delta)^s)^{\log}$ in weak sense.
Let $\Omega\subset \R^N$ be an open bounded set  and $f\in L^2(\Omega)$. Consider the Poisson problem,
\begin{equation}\label{eq-omega}
	\begin{split}
	\quad\left\{\begin{aligned}
		(I+(-\Delta)^s)^{\log}  u &= f &&\text{ in\ \quad  $\Omega$}\\
		u &=  0             && \text{ on\ \  } \R^N\setminus\Omega. 
	\end{aligned}\right.
	\end{split}
	\end{equation}
 We recall that the belinear form $b_s$ is defined by
 \[
 b_s(u,v) =\frac{1}{2}\int_{\R^N}\int_{\R^N}(u(x)-u(x))(v(x)-v(y))K_s(x-y)\ dxdy.
 \]
We call $u\in \cH_0^{\log,s}(\Omega)$ a weak solution to problem \eqref{eq-omega} if
\[
b_s(u,\phi)=\int_{\Omega}f\phi\ dx,\quad \text{ for all }\quad\phi\in \cH_0^{\log, s}(\Omega).
\]
By Cauchy Schwartz and Poincar\'e inequality, we have
\[
|b_s(u,\phi)|\le \sqrt{b_s(u,u)}\sqrt{b_s(\phi,\phi)}=\|u\|_{\cH^{\log,s}(\Omega)}\|\phi\|_{\cH^{\log,s}(\Omega)}, 
\]
and
$$
 \big|\int_{\Omega}f\phi\ dx\big|\le C\|\phi\|_{\cH^{\log,s}(\Omega)}.
$$
By definition
$
b_s(u,u)=\|u\|^2_{\cH^{\log,s}(\Omega)}.
$
It follows using the Lax-Milgram theorem that there exists a unique weak solution to problem \eqref{eq-omega}. Moreover, if $f\in L^{\infty}(\Omega)$ and $\Omega$ has Lipschitz boundary, then $u\in\cC(\Omega)$ (see \cite{KM17,M14, CS22}). If $\Omega$ further satisfies a uniform exterior sphere condition, the regularity results of \cite{KM14,CW19} apply to \eqref{eq-omega} and provide that
\[
u\in \cC_0(\R^N):= \big\{u\in\cC(\R^N):\quad u=0 \text{ on } \R^N\setminus\Omega\big\}.
\]
If additionally $f\in \cC^{\infty}(\Omega)$, then any weak solution $u$ of  problem \eqref{eq-omega} satisfies   (see \cite{FJ22}).
$$u\in \cC^{\infty}(\Omega).$$
Next,  consider the eigenvalue problem involving the operator $\logrels$, that is, we consider the problem \eqref{eq-omega} with $f=\lambda u$, $\lambda\in\R$,
\begin{equation}\label{eq-eigen}
	\begin{split}
	\quad\left\{\begin{aligned}
		(I+(-\Delta)^s)^{\log}  u &= \lambda u &&\text{ in\ \quad  $\Omega$}\\
		u &=  0             && \text{ on\ \  } \R^N\setminus\Omega. 
	\end{aligned}\right.
	\end{split}
	\end{equation}
Then there
is a sequence of eigenvalues
\[
0<\lambda_1(\Omega)<\lambda_2(\Omega)\le \cdots\le \lambda_k(\Omega)\quad\text{ with }\quad\lambda_k(\Omega)\to \infty~ \text{ as } ~ k\to \infty.
\]
The sequence $\{\phi_k\}_{k\in\N}$ of eigenfunctions corresponding to eigenvalues $\lambda_{k}(\Omega)$ forms a complete orthonormal basis of $L^2(\Omega)$ and an orthogonal system of $\cH_0^{\log,s}(\Omega)$.

We call  here a function $u\in \cH_0^{\log ,s}(\Omega)$ an eigenfunction  corresponding to the eigenvalue $\lambda$ if
\[
b_s(u,\phi)=\lambda\int_{\Omega}u\phi\ dx,\quad \text{ for all }\quad\phi\in \cH_0^{\log ,s}(\Omega).
\]
Moreover, it is straightforward to see  by integrating the inequality 
$$||u(x)|-|u(y)||\le |u(x)-u(y)|$$
that the bilinear form decreases  why taking the absolute values,
\[
b_s(|u|,|u|)\le b_s(u,u) \quad\text{ for }\quad u\in \cH_0^{\log,s}(\Omega).
\]
 Therefore, the first eigenvalue $\lambda_1(\Omega)$ is simple and the corresponding eigenfunction $\phi_1$
does not change its sign in $\Omega$ and can be chosen to be strictly positive in $\Omega$. The proof of the above facts can be deduced from \cite[Theorem 1.3]{F22}. 

In addition, any eigenfunction  $u\in \cH_0^{\log, s}(\Omega)$ of $\eqref{eq-eigen}$ is bounded, that is, $u\in L^{\infty}(\Omega)$ and there is a constant $C:=C(N,s)>0$ such that 
\[
\|u\|_{L^{\infty}(\Omega)}\le C\|u\|_{L^{2}(\Omega)}.
\]
This can be deduced from  \cite[Corollary 4.2]{FJ22} (see also \cite[Proposition 1.4]{F22}), using the $\delta$-decomposition technique as introduced in \cite{FJW22}

We note that since the operator $\logrels$ belongs to the family of nonlocal operators with small order, the strong and weak maximum principle  for  weak solutions involving the operator $\logrels$ can be deduced from \cite[Proposition 3.10 and Proposition 3.12]{FJ22} where general nonlocal operators of small order was considered see also \cite{JW19}. Since the first eigenvalue for $\logrels$ satisfies   $\lambda_1(\Omega)>0$ and the corresponding eigenfunction $\phi_1>0$ in $\Omega$, we provide the proof of the strong maximum principle for pointwise solutions here.
\begin{prop}[{\bf Strong Maximum Principle for pointwise solutions}]
    Let $\Omega\subset \R^N$ be an open bounded set. Let $c\in L^{\infty}(\Omega) $ be such that either $c\ge 0$ or $\lambda_1(\Omega)>c^-(x)$ in $\Omega$. Assume that  $u\in\cC(\overline \Omega)\cap \cL^s(\R^N)$ is  Dini  continuous in $\Omega$ and  satisfying pointwisely 
    \begin{equation}\label{eq-maxi}
	\begin{split}
	\quad\left\{\begin{aligned}
		(I+(-\Delta)^s)^{\log}  u &+ c(x) u\ge 0 &&\text{ in\ \quad  $\Omega$}\\
		u &\ge  0             && \text{ on\ \  } \R^N\setminus\Omega. 
	\end{aligned}\right.
	\end{split}
	\end{equation}
 Then $u\ge 0$ in $\R^N$. Furthermore,  either   $u>0$ in $\Omega$ or $u\equiv 0$ in $\R^N$.
\end{prop}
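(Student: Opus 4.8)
The plan is to argue in two steps: first prove $u\ge 0$ in $\R^N$, and then upgrade this to the dichotomy by evaluating $\logrels u$ pointwise at an interior zero. For the nonnegativity step I would work with $v:=u^-=\max\{-u,0\}$. Since $u\ge 0$ on $\R^N\setminus\Omega$ and $u\in\cC(\overline\Omega)$, the function $v$ vanishes on $\R^N\setminus\Omega$ and on $\partial\Omega$, is bounded, has support in $\overline\Omega$, and inherits Dini continuity from $u$ because $|v(x)-v(y)|\le|u(x)-u(y)|$; arguing as in Lemma~\ref{Properties}(iii) via \eqref{Asymptoti-o}, this puts $v\in\cH_0^{\log,s}(\Omega)$. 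In the case $c\ge 0$ one can in fact bypass the energy space entirely: if $\min_{\overline\Omega}u<0$, it is attained at an interior point $x_0\in\Omega$ (on $\partial\Omega$ one has $u\ge 0$ by continuity), so $u(x_0)-u(y)\le 0$ for every $y$, with strict inequality on the set $\R^N\setminus\Omega$ of positive measure; since $K_s>0$ on $\R^N\setminus\{0\}$ and $u$ is Dini continuous at $x_0$, the representation \eqref{Intro-int-1} gives $\logrels u(x_0)<0$, while $c(x_0)u(x_0)\le 0$, contradicting \eqref{eq-maxi}.

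For the general hypothesis $\lambda_1(\Omega)>c^-$ I would instead test \eqref{eq-maxi} against $v$. Multiplying $\logrels u+cu\ge 0$ by $v\ge 0$, integrating over $\Omega$, and using $uv=-v^2$ gives $\int_\Omega(\logrels u)v\,dx\ge\int_\Omega c\,v^2\,dx$. The integration-by-parts identity (symmetry of $K_s$, together with a density/approximation argument pairing the pointwise-defined $\logrels u$ with $v\in\cH_0^{\log,s}(\Omega)$) turns the left-hand side into $b_s(u,v)$; writing $u=u^+-v$ and using $b_s(u^+,v)\le 0$ (the integrand is pointwise nonpositive, since $u^+$ and $v$ have disjoint supports) yields $b_s(u,v)\le -b_s(v,v)$. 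Combining these with the variational characterisation of the first eigenvalue on $\cH_0^{\log,s}(\Omega)$ gives
\[
\lambda_1(\Omega)\int_\Omega v^2\,dx\ \le\ b_s(v,v)\ \le\ -\int_\Omega c\,v^2\,dx\ \le\ \int_\Omega c^-\,v^2\,dx .
\]
Since $\lambda_1(\Omega)-c^-(x)>0$ on $\Omega$, this chain is possible only if $v\equiv 0$, i.e. $u\ge 0$ in $\R^N$.

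For the strong maximum principle, suppose $u(x_0)=0$ at some $x_0\in\Omega$. Then $u(x_0)=0=\min_{\R^N}u$, and using Dini continuity at $x_0$ and $u\in\cL^s(\R^N)$ one evaluates, through \eqref{Intro-int-1},
\[
\logrels u(x_0)=\int_{\R^N}\big(u(x_0)-u(y)\big)K_s(x_0-y)\,dy=-\int_{\R^N}u(y)K_s(x_0-y)\,dy\ \le\ 0,
\]
while \eqref{eq-maxi} at $x_0$ forces $\logrels u(x_0)\ge 0$. Hence $\int_{\R^N}u(y)K_s(x_0-y)\,dy=0$; since $u\ge 0$ and $K_s(z)=\int_0^\infty p_s(z,t)\frac{e^{-t}}{t}\,dt>0$ for every $z\ne 0$ (because $p_s(\cdot,t)>0$), this forces $u=0$ a.e. in $\R^N$, hence $u\equiv 0$ in $\overline\Omega$ by continuity and $u\equiv 0$ in $\R^N$. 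Thus either $u>0$ throughout $\Omega$ or $u\equiv 0$ in $\R^N$.

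The main obstacle is the energy step under the weaker hypothesis $\lambda_1(\Omega)>c^-$: one has to make rigorous both that $v=u^-\in\cH_0^{\log,s}(\Omega)$ and that $\int_\Omega(\logrels u)v\,dx=b_s(u,v)$ for a solution that is merely Dini continuous (in particular that $\logrels u$ is integrable against $v$ up to the boundary). I expect this to follow by combining \eqref{Asymptoti-o} — Dini continuity controlling the near-diagonal contribution and $\cL^s(\R^N)$ the contribution at infinity — with the density results of Lemma~\ref{Density} and the identification of $\cH_0^{\log,s}(\Omega)$, in the same spirit as Lemma~\ref{Properties}.
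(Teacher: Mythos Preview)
Your proposal is correct and follows the same two–step architecture as the paper (first $u\ge 0$, then the dichotomy via pointwise evaluation at an interior zero), but the nonnegativity step is organised differently. The paper first derives a pointwise inequality for $u^-$ itself, namely $\logrels u^-\le c^-\,u^-$ in $\Omega$ (using that $\logrels u^+(x)\le 0$ where $u(x)<0$ and $\logrels u^-(x)\le 0$ where $u(x)\ge 0$), and then tests this against the first eigenfunction $\phi_1$, using the eigenvalue equation $b_s(\phi_1,u^-)=\lambda_1\int u^-\phi_1$ and $\phi_1>0$ to conclude. You instead test the original inequality against $v=u^-$, decompose $b_s(u,v)=b_s(u^+,v)-b_s(v,v)$, use $b_s(u^+,v)\le 0$, and invoke the variational characterisation $b_s(v,v)\ge\lambda_1\|v\|_{L^2}^2$. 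Your separate pointwise argument for the case $c\ge 0$ is an extra, not in the paper.

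Both routes need the same ingredients you flag at the end: that $u^-\in\cH_0^{\log,s}(\Omega)$ and that the pointwise inequality can be paired with the test function via $b_s$. The paper glosses over this and simply multiplies and integrates; your explicit acknowledgement of the issue, and the sketch of how \eqref{Asymptoti-o} and the density results handle it, is if anything more careful. The paper's choice of $\phi_1$ as test function has the mild advantage that $\phi_1$ is a priori in $\cH_0^{\log,s}(\Omega)$, whereas your choice $v=u^-$ needs the observation (which you make) that Dini continuity plus compact support forces $v$ into the energy space; conversely, your decomposition avoids having to first prove the derived inequality $\logrels u^-\le c^- u^-$. The dichotomy step is essentially identical in both proofs.
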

\begin{proof} To see that $u\ge 0$ in $\R^N$, we write  $u=u^+(x)-u^-(x)$, $x\in \R^N$ with 
$$\text{$ u^-(x)= -\min \{u(x),0\}$\quad and \quad$u^+=\max \{u(x),0\}$.}$$  Noticing that $(u^-(x)-u^-(x))(u^+(x)-u^+(y))\le 0$ in $\R^N\times\R^N$, we obverse that $u^-$ satisfies
\begin{equation}\label{eq-maxi1}
	\begin{split}
		(I+(-\Delta)^s)^{\log}  u^- \le  c^-(x) u^- \text{\quad  in\ \quad  $\Omega$}.
	\end{split}
	\end{equation}
Multiplying \eqref{eq-maxi1} by the first eigenfunction $\phi_1$  and integrate over $\R^N$ we obtain
\begin{align*}
     \lambda_1\int_{\R^N} u^-(x)\phi_1(x)\ dx&=\frac{1}{2}\iint_{\R^{2N}}(\phi_1(x)-\phi_1(y))(u^-(x)-u^-(y))K_s(x-y)\ dxdy\\
     &\le \int_{\R^N} c^-(x)u^-(x)\phi_1(x)\ dx.
\end{align*}
The calculations above yield  
 \begin{align*}   
\int_{\R^N} (\lambda_1(\Omega)-c^-(x))u^-(x)\phi_1(x)\ dx\le 0.
\end{align*}
Since $ \lambda_1(\Omega)-c^->0$ in $\Omega$,  this yields that $u^-\equiv 0$ in $\R^N$. Hence  $u$ is nonnegative in $\R^N$.   

Next, Suppose by contradiction that $u$ is not positive in $\Omega$. Since $\Omega$ is bounded, $\overline{\Omega}$ is compact. Since also $u$ is continuous in $\R^N$ and $u\ge 0$ in $\R^N\setminus\Omega$,  there is a point $x_0\in \Omega$ with
\begin{equation}\label{min}
u(x_0)=\min_{x\in\overline \Omega}u(x)\le 0.
\end{equation}
Therefore, as $x_0$ is an interior point where the minimum of $u$ is attained and $c(x_0)\ge 0$ in $\Omega$, it follows that 
\[
\logrels u(x_0) \ge -c(x_0)u(x_0)\ge 0.
\]
Whereas by \eqref{min}, we have that $u(x_0)\le u(x)$ for all $x\in \R^N$. It  follows that
\begin{align*}
0\le \logrels u(x_0)&=P.V.\int_{\R^N}(u(x_0)-u(y))K_s(x_0-y)\ dy\\
&=\int_{\R^N}(u(x_0)-u(y))K_s(x_0-y)\ dy\le 0.
\end{align*}
Moreover, since the integrand  is  non-positive by assumption and \eqref{min}, we conclude that 
\[
u\equiv u(x_0)\quad \text{ in }\quad\R^N.
\]
Now, since $u\ge 0$ in $\R^N\setminus\Omega$,  it follows that  $u\equiv 0$ in $\Omega$ and therefore  $u\equiv 0$ in $\R^N$. This leads to a contradiction and  the proof is established.
\end{proof}

\bibliographystyle{amsplain}

\begin{thebibliography}{10}
\bibitem{A90} J. Arias-de-Reyna. \emph{On the theorem of Frullani.} Proceedings of the American Mathematical Society 109, no. 1 (1990): 165-175.
%
\bibitem{AR23} S. Arias and S. Rodriguez-Lopez. \emph{Bilinear pseudodifferential operators with symbol in $BS^m{1,1}$ on Triebel–Lizorkin spaces with critical Sobolev index.} Collectanea Mathematica (2023): 1-25.

\bibitem{AL22}G. Ascione and J. L\"orinczi. \emph{Potentials for non-local Schr\"odinger operators with zero eigenvalues.} Journal of Differential Equations 317 (2022): 264--364.

\bibitem{B95} W. Beckner. \emph{Pitt's inequality and the uncertainty principle}. Proceedings of the American Mathematical Society 123, no. 6 (1995): 1897--1905.

\bibitem{B97} W. Beckner. \emph{Logarithmic Sobolev inequalities and the existence of singular integrals.} (1997): 303-324.

\bibitem{B14} L. Beghin.  \emph{Geometric stable processes and related fractional differential equations.} Electronic Communications in Probability 19 (2014).
%

\bibitem{B15}L. Beghin, \emph{Fractional gamma and gamma-subordinated processes}, Stoch. Anal. Appl. 33 (5) (2015) 903--926.


\bibitem{B96} J. Bertoin. \emph{ L\'evy processes.} Vol. 121. Cambridge: Cambridge university press, (1996).

\bibitem{BJ07} K. Bogdan and T.  Jakubowski. \emph{Estimates of heat kernel of fractional Laplacian perturbed by gradient operators.}  Communications in mathematical physics 271, no. 1 (2007): 179--198.


\bibitem{BG60}R.M.  Blumenthal,  and R. K. Getoor. \emph{Some theorems on stable processes.} Transactions of the American Mathematical Society 95, no. 2 (1960): 263--273.

\bibitem{DCW2011} D. Chae, P.  Constantin and J. Wu. \emph{Inviscid models generalizing the two-dimensional Euler and the surface quasi-geostrophic equations.} Archive for rational mechanics and analysis 202, no. 1 (2011): 35-62.

\bibitem{CS22}H. A. Chang-Lara and A. Salda\~na. \emph{Classical solutions to integral equations with zero order kernels.} Mathematische Annalen (2023): 1-53.


\bibitem{CPI22}R. C. Char\~ao, A. Piske and R. Ikehata. \emph{A dissipative logarithmic-type evolution equation: Asymptotic profile and optimal estimates.} Journal of Mathematical Analysis and Applications 506, no. 1 (2022): 125587.

\bibitem{CV23} H. Chen and L. V\'eron. \emph{The Cauchy problem associated to the logarithmic Laplacian with an application to the fundamental solution.}  arxiv:2307.16197 (2023).
%
\bibitem{CW19} H. Chen and T.  Weth. \emph{The Dirichlet problem for the logarithmic Laplacian.} Communications in Partial Differential Equations 44, no. 11 (2019): 1100--1139..
%

\bibitem{C18} H. Chen. \emph{The Dirichlet elliptic problem involving regional fractional Laplacian.} Journal of Mathematical Physics 59, no. 7 (2018).

\bibitem{ZR05} ZQ. Chen and R. Song. \emph{Two-sided eigenvalue estimates for subordinate processes in domains}. Journal of Functional Analysis 226, no. 1 (2005): 90--113.

\bibitem{CK23} J. H. Choi and I. Kim. \emph{A maximal $L^p$-regularity theory to initial value problems with time measurable nonlocal operators generated by additive processes.} Stochastics and Partial Differential Equations: Analysis and Computations (2023): 1-64.

\bibitem{EA18} E. Correa and A. de Pablo. \emph{Nonlocal operators of order near zero.}  Journal of Mathematical Analysis and Applications 461, no. 1 (2018): 837--867.

\bibitem{FKV15} M. Felsinger, M. Kassmann and P. Voigt. \emph{The Dirichlet problem for nonlocal operators.} Mathematische Zeitschrift 279, no. 3--4 (2015): 779--809.
%

\bibitem{F22}P. A. Feulefack. \emph{The logarithmic Schr\"odinger operator and associated Dirichlet problems}. Journal of Mathematical Analysis and Applications (2022): 126656.

\bibitem{FJ22} P. A. Feulefack and S. Jarohs. \emph{Nonlocal operators of small order.} Annali di Matematica Pura ed Applicata (1923-) 202, no. 4 (2023): 1501-1529.

\bibitem{PF22} P. A. Feulefack. \emph{Spectral characteristics of Dirichlet problems for nonlocal operators.} PhD diss., Dissertation, Frankfurt am Main, Johann Wolfgang Goethe-Universit\"at, (2022).

\bibitem{FJW22} P. A. Feulefack,  S. Jarohs, and T.  Weth. \emph{Small order asymptotics of the Dirichlet eigenvalue problem for the fractional Laplacian.}  Journal of Fourier Analysis and Applications 28, no. 2 (2022): 1--44.

%

\bibitem{GF20} G. Foghem. \emph{$L^2$-Theory for nonlocal operators on domains.} Publikationen an der Universit\"at Bielefeld (2020).

\bibitem{FKT20} R. L. Frank, , T.  K\"onig and H.  Tang. \emph{Classification of solutions of an equation related to a conformal log Sobolev inequality}. Advances in Mathematics 375 (2020): 107395.

\bibitem{FS08} RL. Frank and R. Seiringer. \emph{Non-linear ground state representations and sharp Hardy inequalities.} Journal of Functional Analysis 255, no. 12 (2008): 3407--3430.


\bibitem{GM02} M.G. Garroni and J. L. Menaldi. \emph{Second order elliptic integro-differential problems.} CRC Press, ( 2002).

\bibitem{GR12} T. Grzywny and M. Ryznar. \emph{ Potential theory of one-dimensional geometric stable processes.} In Colloquium Mathematicum, vol. 1, no.129, (2012): pp. 7--40. 

\bibitem{HS22} V. Hern\'andez Santamar\'ia  and A.  Salda\~na. \emph{Small order asymptotics for nonlinear fractional problems.} Calculus of Variations and Partial Differential Equations 61, no. 3 (2022): 1--26.

\bibitem{JS01} N. Jacob and R. L. Schilling. \emph{L\'evy-type processes and pseudodifferential operators.} In L\'evy processes: theory and applications. Boston, MA: Birkh\"auser Boston, 2001. 139-168.

\bibitem{JW20} S. Jarohs and T. Weth. \emph{Local compactness and nonvanishing for weakly singular nonlocal quadratic forms.} Nonlinear Analysis 193 (2020): 111431.

\bibitem{JW19} S. Jarohs and T. Weth. \emph{ On the maximum principle for nonlocal operators}. Mathematische Zeitschrift 293.1,  (2019):  81 -- 111.


\bibitem{JSW20} S. Jarohs, A. Salda\~na, and T. Weth. \emph{A new look at the fractional poisson problem via the logarithmic Laplacian.}  Journal of Functional Analysis 279, no. 11 (2020): 108732.

\bibitem{KM17} M. Kassmann and A.  Mimica,  \emph{Intrinsic scaling properties for nonlocal operators.} Journal of the European Mathematical Society 19, no. 4  (2017) : 983--1011.


%
\bibitem{KM14} P. Kim and A. Mimica. \emph{Green function estimates for subordinate Brownian motions: stable and beyond.} Transactions of the American Mathematical Society 366, no. 8 (2014): 4383-4422.
%
\bibitem{KS13} V. Knopova and R. L. Schilling. \emph{A note on the existence of transition probability densities of L\'evy processes.}  In Forum Mathematicum, vol. 25, no. 1,  Walter de Gruyter GmbH, (2013): pp. 125--149.


\bibitem{KP99}T.J.~ Kozubowski, A.K. Panorska, \emph{Multivariate geometric stable distributions in financial applications.} Math. Comput. Model. 29 (10-12) (1999) 83--92.


\bibitem{LL97} E.H. Lieb and M. Loss,\emph{ Analysis. Graduate Studies in Mathematics}, vol. 14, American Mathematical Society, Providence,  (1997).

\bibitem{MCC98} D.B. Madan, P.P. Carr, E.C. Chang, \emph{The variance gamma process and option pricing}. Rev. Finance 2 (1) (1998) 79--105.




\bibitem{LP05} JA. L\'{o}pez-Mimbela and N. Privault. \emph{Blow-up and stability of semilinear PDEs with gamma generators.} Journal of mathematical analysis and applications 307, no. 1 (2005): 181-205.


%
\bibitem{M14} A.  Mimica,  \emph{ On harmonic functions of symmetric L\'{e}vy processes.} In Annales de l'IHP Probabilit\'{e}s et statistiques, vol. 50, no. 1,  (2014): pp. 214--235.
%
%
\bibitem{OLBC10}F.W.J. Olver,  DW. Lozier, R F. Boisvert, and CW. Clark, eds. NIST handbook of mathematical functions hardback and CD-ROM. Cambridge university press, (2010).
%
\bibitem{PCI22} A. Piske,  R. C. Char\~ao and R.  Ikehata. \emph{Double diffusion structure of logarithmically damped wave equations with a small parameter.} Journal of Differential Equations 311 (2022): 188-228.


\bibitem{RM00} S. T. Rachev, S. Mittnik,  Stable paretian models in finance. Wiley, New York, (2000)
%
\bibitem{RS06} M. Rao,  R.  Song and Z.  Vondra$\check{\text{c}}$ek. \emph{Green function estimates and Harnack inequality for subordinate Brownian motions.} Potential Analysis 25, no. 1 (2006): 1-- 27.

%

\bibitem{SV03} R. Song, Z. Vondra$\check{\text{c}}$ek, \emph{Potential theory of subordinate killed Brownian motion in a domain}, Probab. Theory Relat. Fields 125 (4) (2003) 578--592.
%
\bibitem{SSV06} H. $\check{\text{S}}$iki\'{c},  R.  Song and Z. Vondra$\check{\text{c}}$ek. \emph{ Potential theory of geometric stable processes.}  Probability Theory and Related Fields 135, no. 4 (2006): 547--575.

\bibitem{TW22} R. Y. Temgoua and T.  Weth. \emph{The eigenvalue problem for the regional fractional Laplacian in the small order limit.}  Potential Analysis (2022): 1--22.

\bibitem{V18}JL. V\`azquez. \emph{Asymptotic behaviour for the fractional heat equation in the Euclidean space.} Complex Variables and Elliptic Equations 63, no. 7-8 (2018): 1216-1231.

\bibitem{ZK22} R. Zhang, V. Kumar, and M.  Ruzhansky. \emph{A Direct Method of Moving Planes for Logarithmic Schr\" odinger Operator.} arxiv preprint arXiv:2210.09811 (2022).

\bibitem{ZSGZ23} S. Zhao, X. Shang, G. Wang, and H. Zhao. \emph{A Fast Algorithm for Intra-Frame Versatile Video Coding Based on Edge Features.} Sensors 23, no. 13 (2023): 6244.
%
%

\vspace{1cm}




\end{thebibliography}

\end{document}